\newtheorem{theorem}{Theorem}[section]
\newtheorem{lemma}[theorem]{Lemma}
\newtheorem{proposition}[theorem]{Proposition}
\newtheorem{corollary}[theorem]{Corollary}
\newtheorem{conjecture}[theorem]{Conjecture}
\theoremstyle{definition}
\newtheorem{definition}[theorem]{Definition}
\theoremstyle{remark}
\newtheorem{remark}[theorem]{Remark}
\numberwithin{equation}{section}
\newcommand{\ipt}{\ensuremath i\!+\!2}
\newcommand{\ipo}{\ensuremath i\!+\!1}
\newcommand{\imo}{\ensuremath i\!-\!1}
\newcommand{\imt}{\ensuremath i\!-\!2}
\newcommand{\imh}{\ensuremath i\!-\!3}
\newcommand{\triple}{\ensuremath i\!-\!1,i,i\!+\!1}
\newcommand{\npo}{\ensuremath n\!+\!1}
\newcommand{\nmo}{\ensuremath n\!-\!1}
\newcommand{\nmt}{\ensuremath n\!-\!2}
\newcommand{\nmh}{\ensuremath n\!-\!3}
\newcommand{\G}{\ensuremath\mathcal{G}}
\newcommand{\C}{\ensuremath\mathcal{C}}
\newcommand{\B}{\bullet}
\newcommand{\cs}[1]{c@{\hskip #1ex}}
\newlength\cellsize \setlength\cellsize{11\unitlength}
\newcommand\cellify[1]{\def\thearg{#1}\def\nothing{}%
\ifx\thearg\nothing
\vrule width0pt height\cellsize depth0pt\else
\hbox to 0pt{\usebox2\hss}\fi%
\vbox to 11\unitlength{
\vss
\hbox to 11\unitlength{\hss$#1$\hss}
\vss}}
\newcommand\tableau[1]{\vtop{\let\\=\cr
\setlength\baselineskip{-11000pt}
\setlength\lineskiplimit{11000pt}
\setlength\lineskip{0pt}
\halign{&\cellify{##}\cr#1\crcr}}}
\newlength\smcellsize \setlength\smcellsize{9\unitlength}
\newcommand\smcellify[1]{\def\thearg{#1}\def\nothing{}%
\ifx\thearg\nothing
\vrule width0pt height\smcellsize depth0pt\else
\hbox to 0pt{\usebox3\hss}\fi%
\vbox to 10\unitlength{
\vss
\hbox to 10\unitlength{\hss$#1$\hss}
\vss}}
\newcommand\smtableau[1]{\vtop{\let\\=\cr
\setlength\baselineskip{-10000pt}
\setlength\lineskiplimit{10000pt}
\setlength\lineskip{0pt}
\halign{&\smcellify{##}\cr#1\crcr}}}
\newcommand{\e}{\mbox{}}
\definecolor{boxgray}{gray}{.7}
\newcommand{\cb}{\color{boxgray}\rule{1\cellsize}{1\cellsize}\hspace{-\cellsize}\usebox2}
\newcommand{\stab}[3]{\begin{array}{c}\rnode{#1}{\tableau{#2}}\\\rnode{#1#1}{_{#3}}\end{array}}
\newcommand{\smstab}[3]{\begin{array}{c}\rnode{#1}{\smtableau{#2}}\\\rnode{#1#1}{_{#3}}\end{array}}
\newcommand{\sbull}[2]{\begin{array}{c}\rnode{#1}{\B}\\[-1ex]\rnode{#1#1}{\makebox[0pt]{$_{#2}$}}\end{array}}
\definecolor{lightgray}{gray}{.85}
\begin{document}


\title[Dual equivalence graphs I]{Dual equivalence graphs I: \\
  A new paradigm for Schur positivity}  

\author[S. Assaf]{Sami H. Assaf}
\address{Department of Mathematics, University of Southern California, Los Angeles, CA 90089-2532}
\email{shassaf@usc.edu}
\thanks{Work supported in part by NSF MSPRF DMS-0703567 and NSF DMS-1265728.}

\subjclass[2000]{Primary 05E05; Secondary 05A30, 05E10}



\keywords{Dual equivalence graphs, quasisymmetric functions, Schur
  positivity}

\begin{abstract}
  We make a systematic study of a new combinatorial construction
  called a dual equivalence graph. We axiomatize these graphs and
  prove that their generating functions are symmetric and Schur
  positive. This provides a universal method for establishing the
  symmetry and Schur positivity of quasisymmetric functions. 
\end{abstract}

\maketitle

%
\section{Introduction}
%
\label{sec:introduction}

Symmetric function theory plays an important role in many areas of
mathematics including combinatorics, representation theory, and
algebraic geometry. Multiplicities of irreducible components,
dimensions of algebraic varieties, and various other algebraic
constructions that require the computation of certain integers may
often be translated to the computation of the Schur coefficients of a
given function. Thus a quintessential problem in symmetric functions
is to prove that a given function has nonnegative integer coefficients
when expressed as a sum of Schur functions. In this paper, we
introduce a new combinatorial construction, called a \emph{dual
  equivalence graph}, by which one can establish the symmetry and
Schur positivity of a function.

To illustrate the general problem and to demonstrate this new
solution, consider the problem of expanding the product of two Schur
functions as a sum of Schur functions, i.e.
\begin{equation}
  s_{\mu} s_{\nu} = \sum_{\lambda} c_{\mu,\nu}^{\lambda} s_{\lambda}.
  \label{e:LRC}
\end{equation}
Since Schur functions are a basis for symmetric functions, this
problem is well-posed, and since they are an integral basis, these
so-called Littlewood--Richardson coefficients $c_{\mu,\nu}^{\lambda}$
are integers. In fact, they are nonnegative. One way to see this is to
realize that the Schur functions are the characters for irreducible
representations of the general linear group, and so
$c_{\mu,\nu}^{\lambda}$ counts multiplicities of irreducible
representations in tensor products. Another way to understand the
nonnegativity is to realize Schur polynomials as Schubert classes for
the cohomology of the Grassmannian, and so $c_{\mu,\nu}^{\lambda}$
counts objects in the intersection of Schubert varieties. The
celebrated Littlewood--Richardson rule gives a direct combinatorial
interpretation for $c_{\mu,\nu}^{\lambda}$ without appealing to
representation theory or geometry. Briefly, $c_{\mu,\nu}^{\lambda}$
counts the number of standard Young tableaux of shape $\mu$ adjoined
$\nu$ that rectify to a specified standard Young tableau of shape
$\lambda$. Dual equivalence graphs abstract this rule to a general
tool with universal applicability.

The general set-up is as follows. Begin with a set $\mathcal{A}$ of
combinatorial objects together with a notion of a descent set
$\mathrm{Des}$ sending an object to a subset of positive integers. For
Littlewood--Richardson coefficients, $\mathcal{A}$ is the set of
standard Young tableaux and $\mathrm{Des}$ is the usual notion of
descents. Optionally, we may also have a nonnegative, possibly
multivariate, integer statistic associated to each object. Define the
quasisymmetric generating function for $\mathcal{A}$ with respect to
$\mathrm{Des}$ by
\begin{displaymath}
  f(X;q) \ = \ \sum_{T \in \mathcal{A}} q^{\mathrm{stat}(T)} Q_{\mathrm{Des}(T)}(X),
\end{displaymath}
where $Q$ denotes the fundamental basis for quasisymmetric functions
\cite{Gessel1984}.  

A \emph{dual equivalence} for $(\mathcal{A},\mathrm{Des})$ is a family
of involutions on $\mathcal{A}$ whose local equivalence classes are
Schur functions and which commute when their indices are far away. For
Littlewood--Richardson coefficients, these are Haiman's original dual
equivalence involutions \cite{Haiman1992}. A dual equivalence is
compatible with a statistic when the involutions preserve the
statistic.  From this framework, we obtain an explicit set
$\mathrm{Dom} \subset \mathcal{A}$ such that
\begin{displaymath}
  f(X;q) \ = \ \sum_{\lambda} \left(
    \sum_{\substack{S \in \mathrm{Dom}(\mathcal{A}) \\ \alpha(S)=\lambda}} q^{\mathrm{stat}(S)}
  \right) s_{\lambda}(X),  
\end{displaymath}
where $\alpha$ is an explicit map derived from $\mathrm{Des}$ that
associates to each element of $\mathrm{Dom}$ a partition. This is the
generalized notion of implicit rectification. For example, in the
Littlewood--Richardson case, the set $\mathrm{Dom}$ is precisely the
set of skew tableaux that rectify to a particular standard tableau of
straight shape. In particular, giving a dual equivalence for the data
$(\mathcal{A},\mathrm{Des})$ that is compatible with the statistic
proves that the generating function $f(X;q)$ is symmetric and Schur
positive and provides an explicit combinatorial formula for the Schur
coefficients.

After reviewing symmetric functions and the associated tableaux
combinatorics in Section~\ref{sec:preliminaries}, we review the dual
equivalence relation on standard tableaux. In Section~\ref{sec:deg},
we use this relation to define an edge-colored graph on standard
tableaux. We axiomatize the graph resulting in a new combinatorial
method for establishing the Schur positivity of a quasisymmetric
function. In Section~\ref{sec:jdt}, we reformulate this machinery in
terms of involutions on a set and give a more explicit
characterization of the Schur coefficients. As a first application of
this theory, Section~\ref{sec:llt} defines involutions on $k$-tuples
of tableaux that give a dual equivalence when $k=2$, giving a
surprisingly short proof of Schur positivity for the ribbon tableaux
generating function introduced by Lascoux, Leclerc, and Thibon
\cite{LLT1997} in the case of dominoes. While these involutions on
tuples of tableaux are not, in general, a dual equivalence, we
conjecture that the equivalence classes are always Schur
positive. These larger equivalence classes are studied further in a
forthcoming sequel.

\begin{center}
{\sc Acknowledgments}
\end{center}

The author is grateful to Mark Haiman for inspiring and helping to
develop many of the ideas contained in this paper and in its precursor
\cite{Assaf2007}. The author is indebted to N. Bergeron, S. Billey,
A. Garsia, M. Haiman, J. Haglund, G. Musiker, and F. Sottile for
carefully reading earlier drafts and providing feedback that greatly
improved the exposition. 

%
\section{Preliminaries}
%
\label{sec:preliminaries}

\subsection{Partitions and tableaux}
\label{sec:pre-partitions}

We represent an integer \emph{partition} $\lambda$ by the decreasing
sequence of its (nonzero) parts
$$
\lambda = (\lambda_1,\lambda_2, \ldots, \lambda_l), \;\;\;\;\;
\lambda_1 \geq \lambda_2 \geq \cdots \geq \lambda_l > 0 .
$$ We denote the size of $\lambda$ by $|\lambda| = \sum_i
\lambda_i$. If $|\lambda| = n$, we say that $\lambda$ is a
\emph{partition of $n$}. The \emph{Young diagram} of a partition
$\lambda$ is the set of points $(i,j)$ in the $\mathbb{Z}
\times\mathbb{Z}$ lattice such that $1 \leq i \leq \lambda_j$. We draw
the diagram so that each point $(i,j)$ is represented by the unit cell
southwest of the point; see Figure~\ref{fig:5441}.

\begin{figure}[ht]
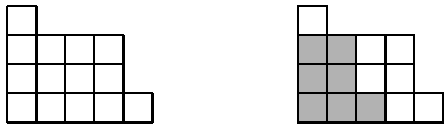

  \begin{displaymath}
    \tableau{ \e  \\
      \e & \e & \e & \e \\
      \e & \e & \e & \e \\
      \e & \e & \e & \e & \e} 
    \hspace{5\cellsize}
    \tableau{ \e  \\
      \cb & \cb & \e & \e \\
      \cb & \cb & \e & \e \\
      \cb & \cb & \cb & \e & \e} 
  \end{displaymath}
  \caption{\label{fig:5441} The Young diagram for $(5,4,4,1)$ and the
    skew diagram for $(5,4,4,1)/(3,2,2)$.}
\end{figure}

For partitions $\lambda,\mu$, we write $\mu \subset \lambda$ whenever
the diagram of $\mu$ is contained within the diagram of $\lambda$;
equivalently $\mu_i \leq \lambda_i$ for all $i$. In this case, we
define the \emph{skew diagram} $\lambda / \mu$ to be the set theoretic
difference $\lambda - \mu$, e.g. see Figure~\ref{fig:5441}. For our
purposes, we depart from the norm by \emph{not} identifying skew
shapes that are translates of one another.  


A \emph{filling} of a (skew) diagram $\lambda$ is a map $S : \lambda
\rightarrow \mathbb{Z}_+$. A \emph{semi-standard Young tableau} is a
filling that is weakly increasing along each row and strictly
increasing along each column. A semi-standard Young tableau is
\emph{standard} if it is a bijection from $\lambda$ to $[n]$, where
$[n] = \{1,2,\ldots,n\}$. For $\lambda$ a diagram of size $n$, define
\begin{eqnarray*}
  \mathrm{SSYT}(\lambda) & = & \{\mbox{semi-standard tableaux}\; 
  T : \lambda \rightarrow \mathbb{Z}_+ \}, \\
  \mathrm{SYT}(\lambda) & = & \{\mbox{standard tableaux}\; 
  T : \lambda \tilde{\rightarrow} [n]\}. 
\end{eqnarray*}
For $T \in \mathrm{SSYT}(\lambda)$, we say that $T$ has \emph{shape}
$\lambda$.  If $T$ contains entries $1^{\pi_1}, 2^{\pi_2}, \ldots$ for
some composition $\pi$, then we say $T$ has \emph{weight} $\pi$. Thus
$T \in \mathrm{SYT}(\lambda)$ if and only if $T$ has weight $(1^n)$.

\begin{figure}[ht]
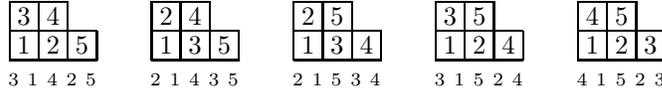

  \begin{displaymath}
    \begin{array}{ccccc}
          \stab{a}{3 & 4 \\ 1 & 2 & 5}{3 \ 1 \ 4 \ 2 \ 5} &
          \stab{b}{2 & 4 \\ 1 & 3 & 5}{2 \ 1 \ 4 \ 3 \ 5} &
          \stab{c}{2 & 5 \\ 1 & 3 & 4}{2 \ 1 \ 5 \ 3 \ 4} &
          \stab{d}{3 & 5 \\ 1 & 2 & 4}{3 \ 1 \ 5 \ 2 \ 4} &
          \stab{e}{4 & 5 \\ 1 & 2 & 3}{4 \ 1 \ 5 \ 2 \ 3}
    \end{array}
  \end{displaymath}
  \caption{\label{fig:SYT32} The standard Young tableaux of shape
    $(3,2)$ with their content reading words.}
\end{figure}

The \emph{content} of a cell of a diagram indexes the diagonal on
which it occurs, i.e. $c(x) = i-j$ when the cell $x$ lies in position
$(i,j) \in \mathbb{Z}_+ \times \mathbb{Z}_+$. The \emph{content
  reading word} of a semi-standard tableau is obtained by reading the
entries in increasing order of content, going southwest to northeast
along each diagonal (on which the content is constant). For examples,
see Figure~\ref{fig:SYT32}. 

\subsection{Symmetric functions}
\label{sec:pre-functions}

We have the familiar integral bases for $\Lambda$, the ring of
symmetric functions, from \cite{Macdonald1995}, all indexed by
partitions of $n$: the monomial symmetric functions $m_{\lambda}$, the
elementary symmetric functions $e_{\lambda}$, the complete homogeneous
symmetric functions $h_{\lambda}$, and, most importantly, the
\emph{Schur functions}, $s_{\lambda}$, which may be defined in several
ways. For the purposes of this paper, we take the tableau approach:
\begin{equation}
  s_{\lambda}(x) = \sum_{T \in \mathrm{SSYT}(\lambda)} x^{T} ,
\label{eqn:s}
\end{equation}
where $x^T$ is the monomial $x_{1}^{\pi_1} x_{2}^{\pi_2} \cdots$ when
$T$ has weight $\pi$. This formula also defines the \emph{skew Schur
  functions}, $s_{\lambda/\mu}$, by taking the sum over semi-standard
tableaux of shape $\lambda/\mu$. 

As we shall see in Section~\ref{sec:deg}, it will often be useful to
express a function in terms of Gessel's fundamental quasisymmetric
functions \cite{Gessel1984} rather than monomials. For $\sigma \in
\{\pm 1\}^{\nmo}$, the \emph{fundamental quasisymmetric function}
$Q_{\sigma}(x)$ is defined by
\begin{equation}
  Q_{\sigma}(x) = \sum_{\substack{i_1 \leq \cdots \leq i_n \\ \sigma_j = -1 \Rightarrow i_j <
      i_{j+1}}} x_{i_1} \cdots x_{i_n} .
\label{eqn:quasisym}
\end{equation}
We have indexed quasisymmetric functions by sequences of $+1$'s and
$-1$'s, though by setting $D(\sigma) = \{ i | \sigma_i = -1\}$, we may
change the indexing to the more familiar one of subsets of $[\nmo]$.

To connect quasisymmetric functions with Schur functions, for $T$ a
standard tableau on $[n]$ with content reading word $w_{T}$, define
the \emph{descent signature} $\sigma(T) \in \{\pm1\}^{\nmo}$ by
\begin{equation}
  \sigma(T)_{i} \; = \; \left\{ 
    \begin{array}{ll}
      +1 & \; \mbox{if $i$ appears to the left of $\ipo$ in $w_T$} \\
      -1 & \; \mbox{if $\ipo$ appears to the left of $i$ in $w_T$}
    \end{array} \right. .
\label{eqn:sigma}
\end{equation}
For example, the descent signatures for the tableaux in
Figure~\ref{fig:SYT32} are $+-++, \ -+-+, \ -++-, \ +-+-, \ ++-+$,
from left to right.  Note that if we replace the content reading word
with either the row or column reading word, the signature given by
\eqref{eqn:sigma} remains unchanged.

\begin{proposition}[\cite{Gessel1984}]
  The Schur function $s_{\lambda}$ is expressed in terms of
  quasisymmetric functions by
  \begin{equation}
    s_{\lambda}(x) = \sum_{T \in \mathrm{SYT}(\lambda)} Q_{\sigma(T)}(x) .
  \label{eqn:quasi-s}
  \end{equation}
\label{prop:quasisym}
\end{proposition}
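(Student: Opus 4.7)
The plan is to prove \eqref{eqn:quasi-s} by grouping the monomial expansion \eqref{eqn:s} according to the \emph{standardization} map $\mathrm{std} \colon \mathrm{SSYT}(\lambda) \to \mathrm{SYT}(\lambda)$. Given $U \in \mathrm{SSYT}(\lambda)$, traverse the cells of $U$ in content reading order and relabel the entries equal to $1$ with $1, 2, \ldots$ in the order encountered, then the entries equal to $2$ with the next consecutive integers, and so on. Since within a single diagonal the content reading order visits cells in increasing row (and column), an elementary check shows that row-weak ties in $U$ become strict in $\mathrm{std}(U)$, while column-strict inequalities are preserved; hence $\mathrm{std}(U) \in \mathrm{SYT}(\lambda)$.

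The central step is a characterization of the fiber $\mathrm{std}^{-1}(T)$ for $T \in \mathrm{SYT}(\lambda)$. Let $c_k$ denote the cell of $T$ containing $k$. I claim $U \in \mathrm{std}^{-1}(T)$ if and only if the values $v_k := U(c_k)$ form a sequence $v_1 \leq v_2 \leq \cdots \leq v_n$ with $v_k < v_{k+1}$ whenever $c_{k+1}$ precedes $c_k$ in content reading order. The forward implication follows because otherwise the standardization procedure would produce $k$ and $k+1$ in the reversed positions. Conversely, given such a sequence, the filling $U$ defined by $U(c_k) := v_k$ is semi-standard: for two cells in the same row, their $T$-values are in content reading order, so the corresponding $v_k$'s are weakly increasing; for two cells in the same column, the $T$-values also have the later cell occurring earlier in content order along the relevant diagonal, forcing a strict inequality between the $v_k$'s.

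By definition \eqref{eqn:sigma}, the forced strict inequalities occur precisely at indices $k$ with $\sigma(T)_k = -1$. Therefore
\begin{displaymath}
  \sum_{U \in \mathrm{std}^{-1}(T)} x^U
  \ = \ \sum_{\substack{v_1 \leq \cdots \leq v_n \\ \sigma(T)_k = -1 \,\Rightarrow\, v_k < v_{k+1}}} x_{v_1} \cdots x_{v_n}
  \ = \ Q_{\sigma(T)}(x)
\end{displaymath}
by \eqref{eqn:quasisym}. Partitioning $\mathrm{SSYT}(\lambda)$ into the fibers of $\mathrm{std}$ and summing \eqref{eqn:s} over $T \in \mathrm{SYT}(\lambda)$ gives the desired identity.

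The only real obstacle is verifying the reverse direction of the fiber characterization, i.e.\ that every sequence satisfying the tie-breaking constraints does assemble into a valid SSYT. This is where the choice of reading convention matters: the argument uses that within each content diagonal the reading order is consistent with both the row-weak and the column-strict conditions, which is exactly the property that makes the signature $\sigma(T)$ coincide with the more familiar descent set and validates the parenthetical remark following \eqref{eqn:sigma}.
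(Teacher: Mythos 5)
Your proof is correct. The paper states Proposition~\ref{prop:quasisym} as a cited result of Gessel and offers no proof of its own, so there is no internal argument to compare against; what you have written is the classical standardization proof, which is exactly the right way to derive \eqref{eqn:quasi-s} from the definitions \eqref{eqn:s}, \eqref{eqn:sigma}, and \eqref{eqn:quasisym}. One step is compressed: in the converse direction of your fiber characterization, column-strictness of the assembled filling does not follow from a single pairwise comparison but from a chain argument. If $c_k$ lies below $c_l$ in the same column (so $k<l$, and $c_l$ has strictly smaller content, hence precedes $c_k$ in the content reading word), then were $v_k = \cdots = v_l$, your tie-breaking condition would force $c_k$ to precede $c_{k+1}$ to precede $\cdots$ to precede $c_l$ in reading order, contradicting that $c_l$ precedes $c_k$; hence $v_m < v_{m+1}$ for some $k \le m < l$, giving $v_k < v_l$. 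With that spelled out, together with the equally routine verification that $\mathrm{std}$ of the assembled filling is indeed $T$ (which follows from the same tie-breaking condition read in the contrapositive: equal consecutive values force the corresponding cells to appear in reading order, so standardization labels them $k, k+1, \ldots$ as required), the argument is complete.
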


Comparing \eqref{eqn:s} with \eqref{eqn:quasi-s}, using
quasisymmetric functions instead of monomials allows us to work with
standard tableaux rather than semi-standard tableaux. One advantage of
this formula is that unlike \eqref{eqn:s}, the right hand side of
\eqref{eqn:quasi-s} is finite. Continuing with the example in
Figure~\ref{fig:SYT32},
\begin{displaymath}
  s_{(3,2)}(x) = Q_{+-++}(x) + Q_{-+-+}(x) + Q_{-++-}(x) + Q_{+-+-}(x)
  + Q_{++-+}(x).
\end{displaymath}

\subsection{Dual equivalence}
\label{sec:deg-standard}

Dual equivalence was first explicitly defined by Haiman
\cite{Haiman1992} as a relation on tableaux dual to \emph{jeu de
  taquin} equivalence under the Schensted correspondence. The
elementary moves defined below are sometimes called dual Knuth moves,
and can be obtained by taking inverses of permutations that are
elementary Knuth equivalent.

\begin{figure}[ht]
    \begin{displaymath}
      \begin{array}{ccc}
        \{ 2314 \stackrel{d_2}{\longleftrightarrow} 1324
        \stackrel{d_3}{\longleftrightarrow} 1423 \} &
        \{ 2143 \begin{array}{c}
          \stackrel{d_2}{\longleftrightarrow} \\[-1ex]
          \stackrel{\displaystyle\longleftrightarrow}{_{d_3}}
        \end{array} 3142 \} &
        \{ 1432 \stackrel{d_2}{\longleftrightarrow} 2431
        \stackrel{d_3}{\longleftrightarrow} 3421 \} \\
        \{ 2341 \stackrel{d_2}{\longleftrightarrow} 1342 
        \stackrel{d_3}{\longleftrightarrow} 1243 \} & &
        \{ 4312 \stackrel{d_2}{\longleftrightarrow} 4213
        \stackrel{d_3}{\longleftrightarrow} 3214 \} \\
        \{ 2134 \stackrel{d_2}{\longleftrightarrow} 3124
        \stackrel{d_3}{\longleftrightarrow} 4123 \} &
        \{ 2413 \begin{array}{c}
          \stackrel{d_2}{\longleftrightarrow} \\[-1ex]
          \stackrel{\displaystyle\longleftrightarrow}{_{d_3}}
        \end{array} 3412 \} &
        \{ 4132 \stackrel{d_2}{\longleftrightarrow} 4231
        \stackrel{d_3}{\longleftrightarrow} 3241 \} 
      \end{array} 
    \end{displaymath}
    \caption{\label{fig:elementary}The nontrivial dual equivalence
      classes of $\mathfrak{S}_4$.}
\end{figure}

\begin{definition}[\cite{Haiman1992}]
  Define the \emph{elementary dual equivalence involution} $d_i$,
  $1<i<n$, on permutations $w$ as follows. If $i$ lies between $i-1$
  and $i+1$ in $w$, then $d_i(w)=w$. Otherwise, $d_i$ interchanges $i$
  and whichever of $i\pm 1$ is further away from $i$. Two permutations
  $w$ and $u$ are \emph{dual equivalent} if there exists a sequence
  $i_1,\ldots,i_k$ such that $u = d_{i_k} \cdots d_{i_1}(w)$.
  \label{defn:ede}
\end{definition}

For examples, see Figure~\ref{fig:elementary}. Two standard tableaux
of the same shape are \emph{dual equivalent} if their content reading
words are; for examples, see Figure~\ref{fig:classes}.

\begin{figure}[ht]
    \begin{displaymath}
      \left\{ \
      \tableau{2 \\ 1 & 3 & 4} 
      \stackrel{d_2}{\longleftrightarrow} 
      \tableau{3 \\ 1 & 2 & 4}
      \stackrel{d_3}{\longleftrightarrow} 
      \tableau{4 \\ 1 & 2 & 3} 
      \ \right\}
      \hspace{1.5em}
      \left\{ \
      \tableau{2 & 4 \\ 1 & 3} 
      \begin{array}{c}
        \stackrel{d_2}{\longleftrightarrow} \\[-.5ex]
        \stackrel{\displaystyle\longleftrightarrow}{_{d_3}}
      \end{array}
      \tableau{3 & 4 \\ 1 & 2}
      \ \right\}
      \hspace{1.5em}
      \left\{ \
      \tableau{4 \\ 3 \\ 1 & 2}
      \stackrel{d_2}{\longleftrightarrow} 
      \tableau{4 \\ 2 \\ 1 & 3}
      \stackrel{d_3}{\longleftrightarrow} 
      \tableau{3 \\ 2 \\ 1 & 4} 
      \ \right\}
    \end{displaymath}
    \caption{\label{fig:classes}The nontrivial dual equivalence
      classes of $\mathrm{SYT}$ of size $4$.}
\end{figure}

\begin{proposition}[\cite{Haiman1992}]
  Two standard tableaux on partition shapes are dual equivalent if and
  only if they have the same shape.
\label{prop:deshape}
\end{proposition}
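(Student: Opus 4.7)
The plan is to prove both directions separately, with the bulk of the work in the reverse direction. The forward direction amounts to showing that each elementary move $d_i$ preserves the shape of the underlying tableau. Given $T \in \mathrm{SYT}(\lambda)$, the map $d_i$ acts on $w_T$ only by permuting the entries $\{\imo, i, \ipo\}$, so I would examine the three cells of $T$ containing these values. Since $T$ is standard, these three cells are constrained: in particular, $i$ lies ``between'' $\imo$ and $\ipo$ in $w_T$ (i.e.\ $d_i$ acts trivially) precisely when the three cells form a horizontal or vertical strip. In the remaining cases, the three cells form an L-shape or its reflection, and a direct check shows that swapping $i$ with whichever of $\imo, \ipo$ lies in the ``far'' cell produces another standard filling of $\lambda$. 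This handles $(\Leftarrow)$.

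For the reverse direction, I would induct on $n = |\lambda|$, with the base cases $n \leq 3$ verified by inspection. Fix two tableaux $T, T' \in \mathrm{SYT}(\lambda)$; the goal is to connect them by a sequence of $d_i$'s. Split into two cases according to whether $n$ occupies the same corner of $\lambda$ in both tableaux. If it does, let $T^-, (T')^-$ denote the restrictions to $\{1,\ldots,\nmo\}$, which are standard tableaux of the common shape $\lambda$ minus that corner. By the inductive hypothesis, they are connected by a sequence of moves $d_{i_1},\ldots,d_{i_k}$ with each $i_j \leq \nmt$. Because $n$ sits in a corner, none of these moves involves $n$, so they lift verbatim to moves on $T, T'$ that fix the cell of $n$.

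The main obstacle, as expected, is the second case: moving $n$ between distinct corners of $\lambda$ by dual equivalence moves. I would reduce this to moving $n$ between adjacent corners $c$ and $c'$ of $\lambda$, since the chain of corners of $\lambda$ can be traversed pairwise. For an adjacent pair, I would exhibit some specific $T$ with $n$ in $c$ and apply $d_{\nmo}$ (or a short sequence ending in $d_{\nmo}$) to produce a dual equivalent tableau with $n$ in $c'$; the key input is that between two adjacent corners there is a short ribbon whose local configuration makes $d_{\nmo}$ non-trivial and swap the roles of $\nmo$ and $n$ in the desired way. Combining this with the first case, any tableau with $n$ in $c$ can first be massaged (using moves $d_i$, $i \leq \nmt$) into the specific $T$ above, then moved to $c'$ by $d_{\nmo}$, and then massaged again into any prescribed tableau with $n$ in $c'$.

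Chaining these reductions shows that all of $\mathrm{SYT}(\lambda)$ forms a single dual equivalence class, giving $(\Rightarrow)$. The subtlety throughout is bookkeeping: one must verify that the lifted moves of the inductive step indeed act as claimed on the full tableau (trivial because $n$ is in a corner), and that the explicit adjacent-corner swap genuinely realizes $d_{\nmo}$ on the relevant reading word—both are case checks on small local configurations rather than anything requiring deeper machinery.
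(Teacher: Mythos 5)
The paper itself does not prove this proposition --- it is imported verbatim from \cite{Haiman1992} --- so there is no in-paper argument to compare against; your sketch has to stand on its own, and in outline it does. The reverse direction is the standard induction: restrict to $\{1,\ldots,\nmo\}$ when $n$ sits in the same corner (the moves $d_i$ with $i\leq\nmt$ lift because the relative order of $\imo,i,\ipo$ in the content reading word is unaffected by deleting $n$), and transport $n$ between adjacent corners via an explicitly constructed tableau on which $d_{\nmo}$ swaps $\nmo$ and $n$. This corner-transport device is exactly the one the paper uses, for a different purpose, in verifying axiom $6$ in the proof of Proposition~\ref{prop:good-defn} (there one takes $T'$ with $i,\ipo$ in the two corners and $\imo$ between them in the reading word), so your plan is consistent with how the author manipulates these objects.

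One imprecision in your forward direction is worth flagging, though it is repairable and not a fatal gap. The cells of $\imo,i,\ipo$ need not be adjacent, so the trivial/nontrivial dichotomy is not literally ``strip versus L-shape.'' The correct statement is in terms of contents: writing each of the two steps $\imo\to i$ and $i\to\ipo$ as either an ascent (weakly lower row, strictly larger column, content increases) or a descent (strictly higher row, weakly smaller column, content decreases), $d_i$ is trivial exactly when both steps are of the same type, and nontrivial when they are of opposite types. In the nontrivial case the three cells can be pairwise non-adjacent: in the shape-$(3,1)$ tableau with bottom row $1,2,4$ and $3$ above the $1$, the cells of $2,3,4$ touch in no useful way, yet $d_3$ acts nontrivially (swapping $3$ and $4$). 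Note also that in the mixed case the cell set can still be a horizontal strip in the usual ``one cell per column'' sense, so even the trivial half of your dichotomy needs the order-sensitive reading. The verification that the swap preserves standardness is then a comparison of the relative positions of the cells of $\imo$ and $\ipo$ (one strictly above and weakly left of $i$, the other weakly below and strictly right), not an inspection of three adjacent boxes; with that rephrasing your ``direct check'' goes through, and the rest of the argument is fine.
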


Propositions~\ref{prop:quasisym} and \ref{prop:deshape} together allow
us to express a Schur function in terms of dual equivalence classes as
\begin{equation}
  s_{\lambda}(X) \ = \ \sum_{T \in [T_{\lambda}]} Q_{\sigma(T)}(X),
\label{e:classes}
\end{equation}
where $[T_{\lambda}]$ denotes the dual equivalence class of some (any)
fixed standard tableau $T_{\lambda}$ of shape $\lambda$. This paradigm
shift to summing over objects in a dual equivalence class is the main
idea underlying dual equivalence graphs presented below.

Dual equivalence also applies to skew tableaux, and the involutions
$d_i$ commute with rectification via jeu de taquin
\cite{Haiman1992}. For example, the skew tableaux in
Figure~\ref{fig:jdt} rectify to the tableaux in
Figure~\ref{fig:classes} respectively. Taking generating functions, we
see that the dual equivalence classes correspond precisely to the
Schur expansion
\begin{displaymath}
  s_{(1)}s_{(2,1)} \ = \ s_{(3,1)} + s_{(2,2)} + s_{(2,1,1)}.
\end{displaymath}

\begin{figure}[ht]
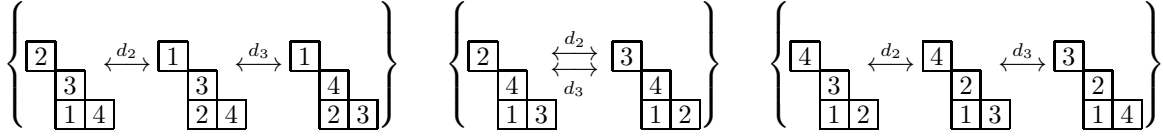

    \begin{displaymath}
      \left\{
      \tableau{2 \\ & 3 \\ & 1 & 4} 
      \hspace{-.7em} \stackrel{d_2}{\longleftrightarrow}  
      \tableau{1 \\ & 3 \\ & 2 & 4} 
      \hspace{-.7em} \stackrel{d_3}{\longleftrightarrow}  
      \tableau{1 \\ & 4 \\ & 2 & 3} 
      \right\} 
      \hspace{1.5em}
      \left\{
      \tableau{2 \\ & 4 \\ & 1 & 3} 
      \hspace{-.7em} 
      \begin{array}{c}
        \stackrel{d_2}{\longleftrightarrow} \\[-.5ex]
        \stackrel{\displaystyle\longleftrightarrow}{_{d_3}}
      \end{array}
      \tableau{3 \\ & 4 \\ & 1 & 2} 
      \right\} 
      \hspace{1.5em}
      \left\{
      \tableau{4 \\ & 3 \\ & 1 & 2} 
      \hspace{-.7em} \stackrel{d_2}{\longleftrightarrow}  
      \tableau{4 \\ & 2 \\ & 1 & 3} 
      \hspace{-.7em} \stackrel{d_3}{\longleftrightarrow}  
      \tableau{3 \\ & 2 \\ & 1 & 4} 
      \right\}
    \end{displaymath}
    \caption{\label{fig:jdt}Dual equivalence classes for $s_{(1)}s_{(2,1)}$.}
\end{figure}

%
\section{Dual equivalence graphs}
%
\label{sec:deg}

\subsection{Axiomatization of dual equivalence}
\label{sec:deg-general}

Construct a graph whose edges are colored on standard tableaux of
partition shape from the dual equivalence relation in the following
way.  Whenever two standard tableaux $T,U$ have content reading words
that differ by an elementary dual equivalence for $\triple$, connect
$T$ and $U$ with an edge colored by $i$. Associate to each tableau $T$
the signature $\sigma(T)$ defined by \eqref{eqn:sigma}. For example,
see Figure~\ref{fig:G5}. Several more examples are given in
Appendix~\ref{app:DEGs}.

\begin{figure}[ht]
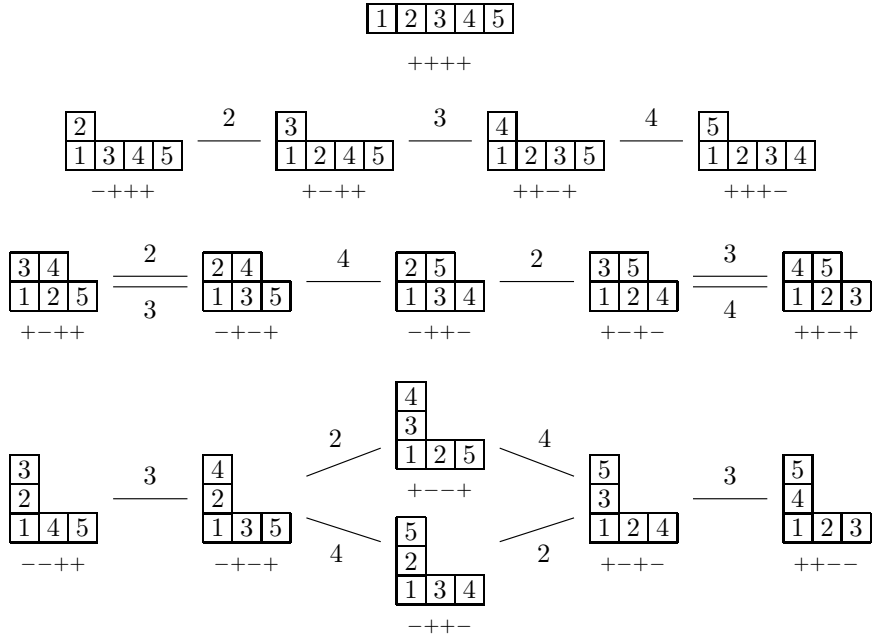

  \begin{center}
    \begin{displaymath}
      \begin{array}{c}
        \begin{array}{c}
          \stab{h}{1 & 2 & 3 & 4 & 5}{++++}
        \end{array} \\[5ex] 
        \begin{array}{\cs{6} \cs{6} \cs{6} c}
          \stab{h}{2 \\ 1 & 3 & 4 & 5}{-+++} &
          \stab{i}{3 \\ 1 & 2 & 4 & 5}{+-++} &
          \stab{j}{4 \\ 1 & 2 & 3 & 5}{++-+} &
          \stab{k}{5 \\ 1 & 2 & 3 & 4}{+++-}
        \end{array} \\[7ex]
        \begin{array}{\cs{7} \cs{7} \cs{7} \cs{7} c}
          \stab{a}{3 & 4 \\ 1 & 2 & 5}{+-++} &
          \stab{b}{2 & 4 \\ 1 & 3 & 5}{-+-+} &
          \stab{c}{2 & 5 \\ 1 & 3 & 4}{-++-} &
          \stab{d}{3 & 5 \\ 1 & 2 & 4}{+-+-} &
          \stab{e}{4 & 5 \\ 1 & 2 & 3}{++-+}
        \end{array} \\[6ex]
        \begin{array}{\cs{7} \cs{7} \cs{7} \cs{7} c}
          & & \stab{w}{4 \\ 3 \\ 1 & 2 & 5}{+--+} & & \\[-4ex]
              \stab{u}{3 \\ 2 \\ 1 & 4 & 5}{--++} &
              \stab{v}{4 \\ 2 \\ 1 & 3 & 5}{-+-+} & &
              \stab{y}{5 \\ 3 \\ 1 & 2 & 4}{+-+-} &
              \stab{z}{5 \\ 4 \\ 1 & 2 & 3}{++--} \\[-5ex]
          & & \stab{x}{5 \\ 2 \\ 1 & 3 & 4}{-++-} & &
        \end{array}
        \psset{nodesep=6pt,linewidth=.1ex}
        \ncline  {h}{i} \naput{2}
        \ncline  {i}{j} \naput{3}
        \ncline  {j}{k} \naput{4}
        \ncline[offset=2pt] {a}{b} \naput{2}
        \ncline[offset=2pt] {b}{a} \naput{3}
        \ncline             {b}{c} \naput{4}
        \ncline             {c}{d} \naput{2}
        \ncline[offset=2pt] {d}{e} \naput{3}
        \ncline[offset=2pt] {e}{d} \naput{4}
        \ncline {u}{v}  \naput{3}
        \ncline {v}{w}  \naput{2}
        \ncline {v}{x}  \nbput{4}
        \ncline {w}{y}  \naput{4}
        \ncline {x}{y}  \nbput{2}
        \ncline {y}{z}  \naput{3}
      \end{array}
    \end{displaymath}
    \caption{\label{fig:G5}The standard dual equivalence graphs
      $\G_{5}, \G_{4,1}, \G_{3,2}$ and $\G_{3,1,1}$.}
  \end{center}
\end{figure}

The connected components of the graph so constructed are the dual
equivalence classes of standard tableaux. Let $\G_{\lambda}$ denote
the subgraph on tableaux of shape $\lambda$.
Proposition~\ref{prop:deshape} states that the $\G_{\lambda}$ exactly
give the connected components of the graph.

Define the generating function associated to $\G_{\lambda}$ by
\begin{equation}
  \sum_{v \in V(\G_{\lambda})} Q_{\sigma(v)}(x) = s_{\lambda}(x).
\label{eqn:glamschur}
\end{equation}
By Proposition~\ref{prop:quasisym}, this is Gessel's quasisymmetric
function expansion for a Schur function. In particular, the generating
function of any graph who vertices have signatures and whose connected
components are isomorphic to the graphs $\G_{\lambda}$ is Schur
positive.

In this section, we characterize $\G_{\lambda}$ in terms of edges and
signatures so that we can readily identify those graphs that are
isomorphic to some $\G_{\lambda}$.

\begin{definition}
  A \emph{signed, colored graph of type $(n,N)$} consists of the
  following data:
  \begin{itemize}
    \item a finite vertex set $V$;
    \item a signature function $\sigma : V \rightarrow \{\pm
      1\}^{N-1}$;
    \item for each $1 < i < n$, a collection $E_i$ of pairs of
      distinct vertices of $V$.
  \end{itemize}
  We denote such a graph by $\G = (V,\sigma,E_{2} \cup\cdots\cup E_{\nmo})$ or
  simply $(V,\sigma,E)$.
\label{defn:scg}
\end{definition}

\begin{definition}
  A signed, colored graph $\G = (V,\sigma,E)$ of type $(n,N)$ is a
  \emph{dual equivalence graph of type $(n,N)$} if $n \leq N$ and the
  following hold:
  \begin{itemize}
    
  \item[(ax$1$)] For $w \in V$ and $1<i<n$, $\sigma(w)_{\imo} =
    -\sigma(w)_{i}$ if and only if there exists $x \in V$ such that
    $\{w,x\} \in E_{i}$. Moreover, $x$ is unique when it exists.

  \item[(ax$2$)] For $\{w,x\} \in E_{i}$, $\sigma(w)_j = -\sigma(x)_j$
    for $j=\imo,i$, and $\sigma(w)_h = \hspace{1ex}\sigma(x)_h$ for $h
    < \imt$ and $h > \ipo$.
      
  \item[(ax$3$)] For $\{w,x\} \in E_{i}$, if $\sigma(w)_{\imt} =
    -\sigma(x)_{\imt}$, then $\sigma(w)_{\imt} = -\sigma(w)_{\imo}$,
    and if $\sigma(w)_{\ipo} = -\sigma(x)_{\ipo}$, then
    $\sigma(w)_{\ipo} = -\sigma(w)_{i}$.
    
  \item[(ax$4$)] Every connected component of $(V,\sigma, E_{\imo}
    \cup E_{i})$ appears in Figure~\ref{fig:lambda4} and every
    connected component of $(V,\sigma,E_{\imt} \cup E_{\imo} \cup
    E_{i})$ appears in Figure~\ref{fig:lambda5}.

  \item[(ax$5$)] If $\{w,x\} \in E_i$ and $\{x,y\} \in E_j$ for $|i-j|
    \geq 3$, then $\{w,v\} \in E_j$ and $\{v,y\} \in E_i$ for some $v
    \in V$.

  \item[(ax$6$)] Any two vertices of a connected component of
    $(V,\sigma,E_2 \cup\cdots\cup E_i)$ may be connected by a path
    crossing at most one $E_i$ edge.

  \end{itemize}
\label{defn:deg}
\end{definition}

Note that if $n>4$, then the allowed structure for connected
components of $(V,\sigma,E_{\imt} \cup E_{\imo} \cup E_{i})$ dictates
that every connected component of $(V,\sigma, E_{\imo} \cup E_{i})$
appears in Figure~\ref{fig:lambda4}.

\begin{figure}[ht]
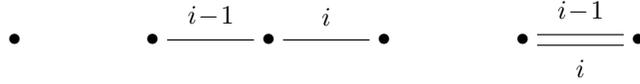

  \begin{displaymath}
    \begin{array}{\cs{11}\cs{9}\cs{9}\cs{11}\cs{9}c}
      \B & \rnode{a}{\B} & \rnode{b}{\B} & \rnode{c}{\B} & \rnode{d}{\B} & \rnode{e}{\B}
    \end{array}
    \psset{nodesep=3pt,linewidth=.1ex}
    \ncline            {a}{b} \naput{\imo}
    \ncline            {b}{c} \naput{i}
    \ncline[offset=2pt]{d}{e} \naput{\imo}
    \ncline[offset=2pt]{e}{d} \naput{i}
  \end{displaymath}
  \caption{\label{fig:lambda4} Allowed $2$-color connected
    components of a dual equivalence graph.}
\end{figure}

\begin{figure}[ht]
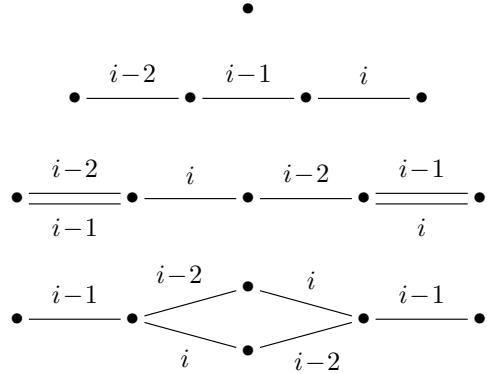

  \begin{displaymath}
    \begin{array}{c}
        \begin{array}{c}
          \B
        \end{array}\\[5ex]
        \begin{array}{\cs{9} \cs{9} \cs{9} c}
          \rnode{h}{\B} & \rnode{i}{\B} & \rnode{j}{\B} & \rnode{k}{\B}
        \end{array} \\[6ex]
        \begin{array}{\cs{9} \cs{9} \cs{9} \cs{9} c}
          \rnode{a}{\B} & \rnode{b}{\B} & \rnode{c}{\B} & \rnode{d}{\B} & \rnode{e}{\B}
        \end{array} \\[5ex]
        \begin{array}{\cs{9} \cs{9} \cs{9} \cs{9} c}
          & & \rnode{w}{\B} & & \\
          \rnode{u}{\B} & \rnode{v}{\B} & & \rnode{y}{\B} & \rnode{z}{\B} \\
          & & \rnode{x}{\B} & &
        \end{array}
        \psset{nodesep=2pt,linewidth=.1ex}
        \ncline  {h}{i} \naput{\imt}
        \ncline  {i}{j} \naput{\imo}
        \ncline  {j}{k} \naput{i}
        \ncline[offset=2pt] {a}{b} \naput{\imt}
        \ncline[offset=2pt] {b}{a} \naput{\imo}
        \ncline             {b}{c} \naput{i}
        \ncline             {c}{d} \naput{\imt}
        \ncline[offset=2pt] {d}{e} \naput{\imo}
        \ncline[offset=2pt] {e}{d} \naput{i}
        \ncline {u}{v}  \naput{\imo}
        \ncline {v}{w}  \naput{\imt}
        \ncline {v}{x}  \nbput{i}
        \ncline {w}{y}  \naput{i}
        \ncline {x}{y}  \nbput{\imt}
        \ncline {y}{z}  \naput{\imo}
      \end{array}
    \end{displaymath}
  \caption{\label{fig:lambda5} Allowed $3$-color connected
    components of a dual equivalence graph.}
\end{figure}

Every connected component of a dual equivalence graph of type $(n,N)$
is again a dual equivalence graph of type $(n,N)$. 

It is often useful to consider a restricted set of edges of a signed,
colored graph. To be precise, for $m \leq n$ and $M \leq N$, the
\emph{$(m,M)$-restriction} of a signed, colored graph $\G$ of type
$(n,N)$ consists of the vertex set $V$, signature function $\sigma: V
\rightarrow \{\pm 1\}^{M-1}$ obtained by truncating $\sigma$ at $M-1$,
and the edge set $E_{2} \cup\cdots\cup E_{m-1}$. For $m \leq n,M \leq
N$, the $(m,M)$-restriction of a dual equivalence graph of type
$(n,N)$ is a dual equivalence graph of type $(m,M)$.

The graph for $\G_{\lambda'}$ is obtained from $\G_{\lambda}$ by
conjugating each standard tableau and multiplying the signatures
coordinate-wise by $-1$. Therefore the structure of $\G_{(2,1,1,1)},
\G_{(2,2,1)}$ and $\G_{(1,1,1,1,1)}$ is also indicated by
Figure~\ref{fig:G5}. Comparing this with Figure~\ref{fig:lambda5},
axiom $4$ stipulates that the components of a dual equivalence graph
restricted to three consecutive edge colors are exactly the graphs for
$\G_{\lambda}$ when $\lambda$ is a partition of $5$.

\begin{proposition}
  For $\lambda$ a partition of $n$, $\G_{\lambda}$ is a dual
  equivalence graph of type $(n,n)$.
\label{prop:good-defn}
\end{proposition}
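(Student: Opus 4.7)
The plan is to verify each of the six axioms (ax1)--(ax6) for $\mathcal{G}_\lambda$. The first three follow from direct analysis of the action of $d_i$ on the descent signature $\sigma$. For (ax1), the condition $\sigma(T)_{i-1} = -\sigma(T)_i$ is equivalent to $i-1$ and $i+1$ lying on the same side of $i$ in the content reading word of $T$, which by Definition~\ref{defn:ede} is precisely when $d_i(T) \neq T$; uniqueness follows because $d_i$ is an involution. For (ax2), the move $d_i$ permutes only two of the values in $\{i-1, i, i+1\}$, leaving positions of entries $\leq i-2$ and $\geq i+2$ untouched, so $\sigma_h$ is preserved for $h \leq i-3$ and $h \geq i+2$; a short check on the two possible patterns $(+,-)$ and $(-,+)$ for $(\sigma_{i-1}, \sigma_i)$ confirms that both signs flip. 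For (ax3), $\sigma_{i-2}$ changes only when $d_i$ swaps $i-1$ with $i$ (not $i$ with $i+1$), and one verifies directly that this occurs precisely when the reading-word position of $i-2$ lies strictly between those of $i-1$ and $i$, in which case the conclusion $\sigma(T)_{i-2} = -\sigma(T)_{i-1}$ is immediate; the assertion for $\sigma_{i+1}$ is symmetric.

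For (ax4), a connected component of the restriction of $\mathcal{G}_\lambda$ to two (respectively three) consecutive edge colors fixes all entries outside a window of four (respectively five) consecutive values, and the component is determined by how these values fill their skew subshape inside $\lambda$. Since the involutions $d_i$ commute with jeu de taquin, rectification identifies such a component with $\mathcal{G}_\mu$ for some partition $\mu$ of $4$ (respectively $5$); direct inspection, using Figure~\ref{fig:G5} together with its conjugates, matches the resulting components to exactly those listed in Figures~\ref{fig:lambda4} and~\ref{fig:lambda5}. For (ax5), when $|i-j| \geq 3$ the value sets $\{i-1, i, i+1\}$ and $\{j-1, j, j+1\}$ are disjoint, so the involutions $d_i$ and $d_j$ commute; taking $v := d_j(w) = d_i(y)$ closes the required square.

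The main obstacle is (ax6). Applying rectification once more, connected components of the $(i,N)$-restriction of $\mathcal{G}_\lambda$ are isomorphic to $\mathcal{G}_\rho$ for partitions $\rho$ of size $i+1$, so it suffices to prove the following statement: in $\mathcal{G}_\rho$ with $|\rho| = m$, any two standard tableaux may be connected by a path using at most one $d_{m-1}$-edge. If $m$ occupies the same corner in both tableaux, their sub-tableaux on $\{1, \ldots, m-1\}$ share a common partition shape, and Proposition~\ref{prop:deshape} applied to this shape yields a path using only $d_2, \ldots, d_{m-2}$. If $m$ occupies distinct corners $c_T, c_U$ of $\rho$, the plan is to exhibit an intermediate tableau $T^*$ with $m$ at $c_T$, $m-1$ at $c_U$, and $m-2$ placed at a corner of $\rho \setminus \{c_T, c_U\}$ that lies closer in the reading word to $c_U$ than $c_T$ does. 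Such a $T^*$ is reachable from $T$ via $d_2, \ldots, d_{m-2}$ by Proposition~\ref{prop:deshape} applied to the sub-shape $\rho \setminus \{c_T\}$; a single application of $d_{m-1}$ in $T^*$ then swaps $m-1$ with $m$, and Proposition~\ref{prop:deshape} applied once more to the sub-shape $\rho \setminus \{c_U\}$ connects the result to $U$. The delicate point is showing that a suitable corner of $\rho \setminus \{c_T, c_U\}$ always exists; this is handled by a case analysis on the relative positions of $c_T$ and $c_U$ along the boundary of $\rho$, using the fact that a corner of $\rho$ adjacent to $c_U$ along this boundary always yields a position in the reading word sufficiently near $c_U$.
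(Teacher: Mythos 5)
Your treatment of axioms 1, 2, 3, and 5 is correct and essentially identical to the paper's. Your axiom 4 argument takes a slightly different route: instead of reducing to a finite check on permutations of $5$ via the explicit description of double edges, you localize to the skew window of four (resp.\ five) consecutive values and invoke the compatibility of $d_i$ with \emph{jeu de taquin} to identify each component with some $\G_{\mu}$; this is valid (the paper does cite Haiman's jeu-de-taquin compatibility) and arguably more conceptual.

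The gap is in axiom 6, in the two-corner case. For the single application of $d_{m-1}$ to $T^{*}$ to exchange $m-1$ and $m$, what is needed is that $m-2$ lie \emph{strictly between} $m-1$ and $m$ in the content reading word: by Definition~\ref{defn:ede}, $d_{m-1}$ acts nontrivially only if $m-1$ is not between $m-2$ and $m$, and it then swaps $m-1$ with whichever of $m-2,m$ is \emph{farther} away. Your condition --- that the cell holding $m-2$ be closer in reading order to $c_U$ than $c_T$ is --- does not imply this. If that cell lies on the far side of $c_U$ from $c_T$ (still closer to $c_U$ than $c_T$ is), then $m-1$ sits between $m-2$ and $m$ and $d_{m-1}(T^{*})=T^{*}$, so there is no $E_{m-1}$ edge at $T^{*}$ at all and the construction breaks. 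Relatedly, your existence sketch is aimed at the wrong object: you need a removable corner of $\rho\setminus\{c_T,c_U\}$ (not of $\rho$; a corner of $\rho$ adjacent to $c_U$ may be $c_T$ itself, or may lie on the wrong side of $c_U$) whose reading position is strictly between those of $c_T$ and $c_U$. That such a corner exists does require an argument --- the key fact is that two distinct corners of a partition have contents differing by at least $2$, so the open content interval between $c_T$ and $c_U$ is nonempty, and one then checks that $\rho\setminus\{c_T,c_U\}$ has a removable corner in that range (e.g.\ the new corner created just west of the southeastern one of $c_T,c_U$, or the first existing corner of $\rho$ encountered moving from it toward the other). With the condition corrected to ``strictly between'' and this existence settled, your argument coincides with the paper's proof of axiom 6.
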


\begin{proof}
  For $T \in \mathrm{SYT}(\lambda)$, $\sigma(T)_{\imo} = -\sigma(T)_i$
  if and only if $i$ does not lie between $\imo$ and $\ipo$ in the
  content reading word of $T$. In this case, there exists $U \in
  \mathrm{SYT}(\lambda)$ such that $T$ and $U$ differ by an elementary
  dual equivalence for $\triple$. Therefore $U$ is obtained from $T$
  by swapping $i$ with $\imo$ or $\ipo$, whichever lies further away,
  with the result that $\sigma(T)_j = -\sigma(U)_j$ for $j=\imo,i$ and
  also $\sigma(T)_h = \sigma(U)_h$ for $h<\imt$ and $\ipo<h$. This
  verifies axioms $1$ and $2$.
  
  For axiom $3$, if $\sigma(T)_{\imt} = -\sigma(U)_{\imt}$, then $i$
  and $\imo$ have interchanged positions with $\imt$ lying between, so
  that $T$ and $U$ also differ by an elementary dual equivalence for
  $i\!-\!2,i\!-\!1,i$, and similarly for $\ipo$. From this, we obtain
  an explicit description of double edges as those connecting vertices
  where $i-2$ and $i+1$ lie between $i-1$ and $i$. Therefore axiom $4$
  becomes a straightforward, finite check on permutations of $5$. If
  $|i-j| \geq 3$, then $\{\triple\} \cap \{j\!-\!1,j,j\!+\!1\} =
  \emptyset$, so the elementary dual equivalences for $\triple$ and
  for $j\!-\!1,j,j\!+\!1$ commute, thereby demonstrating axiom $5$.

  Finally, for $T,U \in \mathrm{SYT}(\lambda)$, $|\lambda| = \ipo$, we
  must show that there exists a path from $T$ to $U$ crossing at most
  one $E_i$ edge. Let $\C_T$ (resp. $\C_U$) denote the connected
  component of the $(i,i)$-restriction of $\G_{\lambda}$ containing
  $T$ (resp.  $U$).  Let $\mu$ (resp. $\nu$) be the shape of $T$
  (resp. $U$) with the cell containing $\ipo$ removed. Then $\C_T
  \cong \G_{\mu}$ and $\C_U \cong \G_{\nu}$. If $\mu = \nu$, then, by
  Proposition~\ref{prop:deshape}, $\C_T = \C_U$ and axiom $6$
  holds. Assume, then, that $\mu \neq \nu$. Since $\mu,\nu \subset
  \lambda$ and $|\mu| = |\nu| = |\lambda|-1$, both cells $\lambda/\mu$
  and $\lambda/\nu$ must be northeastern corners of
  $\lambda$. Therefore there exists $T' \in \mathrm{SYT}(\lambda)$
  with $i$ in position $\lambda/\nu$, $\ipo$ in position
  $\lambda/\mu$, and $\imo$ between $i$ and $\ipo$ in the content
  reading word of $T'$.  Let $U'$ be the result of swapping $i$ and
  $\ipo$ in $T'$, in particular, $\{T',U'\} \in E_{i}$. By
  Proposition~\ref{prop:deshape}, $T'$ is in $\C_T$ and $U'$ is in
  $\C_U$, hence there exists a path from $T$ to $T'$ and a path from
  $U'$ to $U$ each crossing only edges $E_h$, $h < i$. This
  establishes axiom $6$.
\end{proof}

\begin{definition}
  For partitions $\lambda \subset \rho$, with $|\lambda| = n$ and
  $|\rho| = N$, choose a tableau $A$ of shape $\rho/\lambda$ with
  entries $n+1,\ldots,N$. Define the set of standard Young tableaux of
  shape $\lambda$ augmented by $A$, denoted
  $\mathrm{ASYT}(\lambda,A)$, to be those $T \in \mathrm{SYT}(\rho)$
  such that $T$ restricted to $\rho/\lambda$ is $A$. Let
  $\G_{\lambda,A}$ be the signed, colored graph of type $(n,N)$
  constructed on $\mathrm{ASYT}(\lambda,A)$ with $i$-edges given by
  elementary dual equivalences for $\triple$ with $i<n$.
  \label{defn:augment}
\end{definition}

Note that $\G_{\lambda,A}$ is a dual equivalence graph of type
$(n,N)$, and the $(n,n)$-restriction of $\G_{\lambda,A}$ is
$\G_{\lambda}$.

Proposition~\ref{prop:good-defn} is the first step towards justifying
Definition~\ref{defn:deg}, and also allows us to refer to
$\G_{\lambda}$ as the \emph{standard dual equivalence graph
  corresponding to $\lambda$}. In order to show the converse, when two
graphs satisfy axiom 1, as all graphs in this paper do, we define an
isomorphism between them to be a sign-preserving bijection on vertex
sets that respects color-adjacency. Precisely, we have the following.

\begin{definition}
  A \emph{morphism} between two signed, colored graphs of type $(n,N)$
  satisfying dual equivalence graph axiom $1$, say $\G=(V,\sigma,E)$
  and $\mathcal{H}=(W,\tau,F)$, is a map $\phi : V \rightarrow W$ such
  that for every $u,v \in V$
  \begin{itemize}
  \item for every $1 \leq i < N$, we have $\sigma(v)_i =
    \tau(\phi(v))_i$, and 
  \item for every $1 < i < n$, if $\{u,v\} \in E_i$, then
    $\{\phi(u),\phi(v)\} \in F_i$.
  \end{itemize}
  A morphism is an \emph{isomorphism} if it is a bijection on vertex
  sets.
\label{defn:isomorphism}
\end{definition}

\begin{lemma}
  If $\phi$ is a morphism from a signed, colored graph $\G$ of type
  $(n,N)$ satisfying axiom $1$ to an augmented standard dual
  equivalence graph $\G_{\lambda,A}$, then $\phi$ is surjective.
  \label{lem:onto}
\end{lemma}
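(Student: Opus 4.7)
The plan is to lift paths in $\G_{\lambda,A}$ back to $\G$ using axiom $1$, exploiting the connectedness of $\G_{\lambda,A}$.

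First I would verify that $\G_{\lambda,A}$ is connected. By construction, its edges are the elementary dual equivalences for $\triple$ with $i<n$, which only permute the entries $1,\ldots,n$ occupying the cells of $\lambda$, leaving the augmenting tableau $A$ of shape $\rho/\lambda$ fixed. Hence two elements of $\mathrm{ASYT}(\lambda,A)$ are joined by a path in $\G_{\lambda,A}$ if and only if their restrictions to $\lambda$ are dual equivalent as standard tableaux of shape $\lambda$, which holds for every such pair by Proposition~\ref{prop:deshape}. Thus $\G_{\lambda,A}$ is connected.

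Next I would establish a local lifting claim: for every $v \in V$ and every $i$ such that $\phi(v)$ has an $i$-neighbor $w'$ in $\G_{\lambda,A}$, there is some $u \in V$ with $\{v,u\} \in E_i$ and $\phi(u)=w'$. Indeed, since $\G_{\lambda,A}$ is a dual equivalence graph (so satisfies axiom~$1$) and $w'$ is an $i$-neighbor of $\phi(v)$, we have $\tau(\phi(v))_{\imo}=-\tau(\phi(v))_i$. Because $\phi$ preserves signatures, $\sigma(v)_{\imo}=-\sigma(v)_i$, so axiom~$1$ applied to $\G$ yields a unique $u \in V$ with $\{v,u\} \in E_i$. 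The morphism property gives $\{\phi(v),\phi(u)\} \in F_i$, and the uniqueness clause of axiom~$1$ in $\G_{\lambda,A}$ forces $\phi(u) = w'$.

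Finally I would combine the two ingredients. The domain $V$ is nonempty (otherwise $\phi$ could not be surjective onto the nonempty set $\mathrm{ASYT}(\lambda,A)$, so the statement would fail), so choose any $v_0 \in V$ and set $w_0 := \phi(v_0)$. For an arbitrary target vertex $w \in \mathrm{ASYT}(\lambda,A)$, connectedness of $\G_{\lambda,A}$ furnishes a path $w_0 \text{--} w_1 \text{--} \cdots \text{--} w_k = w$. Starting from $v_0$ and iterating the local lifting claim along the sequence of edge colors of this path produces $v_1,\ldots,v_k \in V$ with $\phi(v_j)=w_j$; in particular $w = \phi(v_k)$ lies in the image of $\phi$. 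Hence $\phi$ is surjective.

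The only real obstacle is the local lifting step, and the crucial input there is the two-sided nature of axiom~$1$ (existence \emph{and} uniqueness of the $i$-neighbor once the signature condition holds), together with sign preservation by $\phi$; everything else is a straightforward path-tracing argument.
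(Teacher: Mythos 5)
Your proposal is correct and follows essentially the same argument as the paper: use axiom~$1$ in $\G$ (existence) together with the uniqueness clause of axiom~$1$ in $\G_{\lambda,A}$ and sign-preservation to show the image of $\phi$ is closed under passing to $i$-neighbors, then invoke connectedness of $\G_{\lambda,A}$. Your explicit verification that $\G_{\lambda,A}$ is connected (via Proposition~\ref{prop:deshape}, since the edges only move the entries $1,\ldots,n$) is a detail the paper leaves implicit, but the substance is identical.
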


\begin{proof}
  Suppose $T = \phi(v)$ for some $T \in \mathrm{ASYT}(\lambda,A)$ and
  some vertex $v$ of $\G$. Then for every $1 < i < n$, if $\{T,U\} \in
  E_{i}$, then since $\sigma(v) = \sigma(T)$, by axiom $1$ there
  exists a unique vertex $w$ of $\G$ such that $\{v,w\} \in E_i$ in
  $\G$. Since $\phi$ is a morphism, we must have $\{T,\phi(w)\} \in
  E_i$ in $\G_{\lambda,A}$. Thus by the uniqueness condition of axiom
  $1$, $\phi(w) = U$, and so $U$ also lies in the image of
  $\phi$. Therefore the $i$-neighbor of any vertex in the image of
  $\phi$ also lies in the image since $\phi$ preserves
  $i$-edges. Since $\G_{\lambda,A}$ is connected, $\phi$ is
  surjective.
\end{proof}

The final justification of this axiomatization is the following
converse of Proposition~\ref{prop:good-defn}.

\begin{theorem}
  Every connected component of a dual equivalence graph of type
  $(n,n)$ is isomorphic to $\G_{\lambda}$ for a unique partition
  $\lambda$ of $n$.
\label{thm:isomorphic}
\end{theorem}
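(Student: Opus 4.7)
The plan is to proceed by induction on $n$, strengthening the statement to an augmented version: every connected component of a dual equivalence graph of type $(n, N)$ is isomorphic to $\G_{\lambda, A}$ for a unique partition $\lambda$ of $n$ and augmenting tableau $A$ on entries $n+1, \ldots, N$. The theorem is the specialization $N = n$, $A = \emptyset$. Uniqueness of $\lambda$ follows from equation~(\ref{eqn:glamschur}): once $\G \cong \G_\lambda$, its generating function equals $s_\lambda$, and Schur functions are linearly independent.

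The base case $n \leq 5$ is handled directly by axiom~4 together with the signature compatibilities imposed by axioms~1--3: Figure~\ref{fig:lambda5} lists all allowed $3$-color components, and these coincide with $\G_\lambda$ for partitions of $5$. Augmentation entries $n+1, \ldots, N$ participate in no $E_i$ with $i < n$, so they contribute to the signature only in a controlled way read off from $A$, reducing the augmented case to straightforward signed-graph inspection.

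For the inductive step, let $\G$ be a connected dual equivalence graph of type $(n, N)$ and consider its $(n-1, N)$-restriction. By the inductive hypothesis each connected component $\mathcal{C}_j$ of this restriction is isomorphic to some $\G_{\mu_j, B_j}$, in which the entry $n$ occupies a corner cell $c_j$. Fix a distinguished component $\mathcal{C}_0 \cong \G_{\mu_0, B_0}$ and set $\lambda := \mu_0 \cup \{c_0\}$, letting $A$ be $B_0$ with the $n$-entry removed. By axiom~6 applied with index $n-1$, every other component $\mathcal{C}_w$ is joined to $\mathcal{C}_0$ by at least one $E_{n-1}$ bridge. Analyzing the local structure around such a bridge via the inductive isomorphisms and axioms~2, 3, and~4 (Figures~\ref{fig:lambda4} and~\ref{fig:lambda5}) shows the associated swap is of the form $n-1 \leftrightarrow n-2$ or $n-1 \leftrightarrow n$; either way the cell set occupied by $\{1, \ldots, n\}$ is preserved, so $\mu_w \cup \{c_w\} = \lambda$ and a single partition $\lambda$ indexes every component.

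Now build $\phi : V(\G) \to V(\G_{\lambda, A})$ by initializing on $\mathcal{C}_0$ from the inductive isomorphism, and for each other component $\mathcal{C}_w$ choosing a bridge $\{v, w_v\}$, declaring $\phi(w_v)$ to be the unique $E_{n-1}$ neighbor of $\phi(v)$ in $\G_{\lambda, A}$ guaranteed by axiom~1 (since $\G_{\lambda, A}$ is a dual equivalence graph by Proposition~\ref{prop:good-defn}), then extending over $\mathcal{C}_w$ via the inductive isomorphism pinned at $w_v$. The main obstacle is proving this construction is well defined --- independent of the bridge chosen. For two bridges $\{v_1, w_1\}, \{v_2, w_2\}$ between $\mathcal{C}_0$ and $\mathcal{C}_w$, one traces a path from $v_1$ to $v_2$ inside $\mathcal{C}_0$ and from $w_1$ to $w_2$ inside $\mathcal{C}_w$, reducing compatibility to elementary edge commutations: axiom~5 handles commutation between $E_{n-1}$ and $E_i$ with $|i - (n-1)| \geq 3$, while axioms~3 and~4 control the close colors $n-3, n-2, n-1$ exactly as in Figures~\ref{fig:lambda4} and~\ref{fig:lambda5}. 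Once $\phi$ is a well-defined signature-preserving morphism, Lemma~\ref{lem:onto} yields surjectivity and the uniqueness clause of axiom~1 yields injectivity.
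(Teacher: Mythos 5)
Your overall strategy (induction, restriction to colors $<n-1$, gluing the inductive isomorphisms along $E_{n-1}$ bridges, well-definedness via commutation) matches the paper's, but the two steps you treat as routine are exactly where the real work lies, and as stated they do not go through. First, the claim that ``analyzing the local structure around a bridge'' yields $\mu_w\cup\{c_w\}=\lambda$ is an assertion, not an argument. The inductive isomorphisms $\mathcal{C}_0\cong\G_{\mu_0,B_0}$ and $\mathcal{C}_w\cong\G_{\mu_w,B_w}$ are produced independently of one another, and a single $E_{n-1}$ edge together with axioms 1--4 only constrains signatures in positions $n-3,\ldots,n$; it does not determine the shape attached to $\mathcal{C}_w$ from that attached to $\mathcal{C}_0$. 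The paper's Theorem~\ref{thm:cover} must split into two cases and, in the hard case (the bridge moves $n$), transport an entire induced subgraph $\mathcal{T}$ --- all tableaux with the two extreme entries pinned and the middle one between them --- across the bridge, using that every edge of that particular subgraph commutes with the bridging color; it is the isomorphism type of this whole subgraph, not the data of one vertex, that pins down $\lambda$. Relatedly, your well-definedness step via ``a path from $v_1$ to $v_2$'' is too coarse: a generic path in $\mathcal{C}_0$ uses $E_{n-2}$ and $E_{n-3}$ edges that do \emph{not} commute with $E_{n-1}$ (e.g.\ the chain component of Figure~\ref{fig:lambda5}), and the paper must route the connecting path inside $\mathcal{T}$ (resp.\ through a vertex carrying a double edge) precisely to avoid these configurations. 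You also need compatibility for bridges between two non-distinguished components, not only those incident to $\mathcal{C}_0$.

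Second, and more seriously, you derive injectivity from ``the uniqueness clause of axiom 1.'' This is false: the example of Figure~\ref{fig:gregg} satisfies axioms 1--5, so axiom 1's uniqueness holds there, yet the resulting morphism onto $\G_{(3,2,1)}$ is two-to-one. Axiom 1 prevents a vertex from having two $i$-neighbors; it does not prevent two distinct components of the restriction from mapping onto the same piece of $\G_\lambda$. Injectivity genuinely requires axiom 6 in combination with a uniqueness statement (the paper's Corollary~\ref{cor:fibers}): for each $\nu$ there is a \emph{unique} component reachable from a given one by at most one $E_{n-1}$ edge whose image is $\G_\nu$, while axiom 6 forces \emph{every} component to be so reachable, whence the components of the restriction are pairwise non-isomorphic in the fiber sense and the morphism is injective. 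You invoke axiom 6 only to produce the existence of bridges, which is not where its force is needed.
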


The proof of Theorem~\ref{thm:isomorphic} is the content of
Section~\ref{sec:deg-proof}.  We conclude this section by interpreting
Theorem~\ref{thm:isomorphic} in terms of symmetric functions.

\begin{corollary}
  Let $\G$ be a dual equivalence graph of type $(n,n)$ such that every
  vertex is assigned some additional statistic $\alpha$ that is
  constant on connected components of $\G$. Then
  \begin{equation}
    \sum_{v \in V(\G)} q^{\alpha(v)} Q_{\sigma(v)}(X) = 
    \sum_{\lambda} \sum_{\C \cong \G_{\lambda}} q^{\alpha(\C)} s_{\lambda}(X) .
    \label{eqn:schurpos}
  \end{equation}
  where the inner sum is over connected components of $\G$ that are
  isomorphic to $\G_{\lambda}$. In particular, the generating function
  for $\G$ so defined is symmetric and Schur positive.
\label{cor:schurpos}
\end{corollary}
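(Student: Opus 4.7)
The plan is to reduce the identity to a sum over connected components and then invoke Theorem~\ref{thm:isomorphic} component by component. First I would partition the vertex set $V(\G)$ as a disjoint union of its connected components $\C$, so that
\begin{displaymath}
  \sum_{v \in V(\G)} q^{\alpha(v)} Q_{\sigma(v)}(X)
  \ = \ \sum_{\C} \ \sum_{v \in V(\C)} q^{\alpha(v)} Q_{\sigma(v)}(X).
\end{displaymath}
Because $\alpha$ is constant on each connected component by hypothesis, the inner factor $q^{\alpha(v)}$ may be pulled out of the inner sum as $q^{\alpha(\C)}$.

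Next I would use the fact, noted just after Definition~\ref{defn:deg}, that every connected component of a dual equivalence graph of type $(n,n)$ is itself a dual equivalence graph of type $(n,n)$. Theorem~\ref{thm:isomorphic} then supplies, for each component $\C$, a unique partition $\lambda = \lambda(\C)$ of $n$ and an isomorphism $\phi_\C : \C \stackrel{\sim}{\to} \G_{\lambda}$. By Definition~\ref{defn:isomorphism}, $\phi_\C$ is a sign-preserving bijection, so $\sigma(v) = \sigma(\phi_\C(v))$ for every $v \in V(\C)$. Substituting the bijection into the inner sum and applying the Schur expansion \eqref{eqn:glamschur} gives
\begin{displaymath}
  \sum_{v \in V(\C)} Q_{\sigma(v)}(X)
  \ = \ \sum_{T \in V(\G_{\lambda})} Q_{\sigma(T)}(X)
  \ = \ s_{\lambda}(X).
\end{displaymath}

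Finally I would regroup the outer sum by the isomorphism type of the component: for each partition $\lambda$ of $n$, collect all components $\C$ with $\C \cong \G_{\lambda}$. This yields
\begin{displaymath}
  \sum_{\C} q^{\alpha(\C)} s_{\lambda(\C)}(X)
  \ = \ \sum_{\lambda} \sum_{\C \cong \G_{\lambda}} q^{\alpha(\C)} s_{\lambda}(X),
\end{displaymath}
which is the desired identity. Symmetry and Schur positivity of the generating function are then immediate, since each coefficient $\sum_{\C \cong \G_{\lambda}} q^{\alpha(\C)}$ is a polynomial in $q$ with nonnegative integer coefficients. There is no serious obstacle here: the entire argument is essentially bookkeeping once Theorem~\ref{thm:isomorphic} and the sign-preservation clause of Definition~\ref{defn:isomorphism} are in hand. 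The only minor point requiring care is the observation that the restriction of $\G$ to a connected component is still a dual equivalence graph of the same type $(n,n)$, so that Theorem~\ref{thm:isomorphic} applies directly rather than only to the ambient graph.
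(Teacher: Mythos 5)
Your proposal is correct and is essentially the paper's own (implicit) argument: the corollary is presented there as an immediate consequence of Theorem~\ref{thm:isomorphic}, obtained by decomposing $\G$ into connected components, applying the theorem and the sign-preservation clause of Definition~\ref{defn:isomorphism} to each, and invoking \eqref{eqn:glamschur}. The bookkeeping with the statistic $\alpha$ and the regrouping by isomorphism type are exactly as intended.
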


\subsection{The structure of dual equivalence graphs}
\label{sec:deg-proof}

We begin the proof of Theorem~\ref{thm:isomorphic} by showing that the
standard dual equivalence graphs are non-redundant in the sense that
they are mutually non-isomorphic and have no nontrivial
automorphisms. Both results stem from the observation that
$\G_{\lambda}$ contains a unique vertex such that the composition
formed by the lengths of the runs of $+1$'s in the signature gives a
maximal partition.

\begin{proposition} 
  If $\phi: \G_{\lambda} \rightarrow \G_{\mu}$ is an isomorphism, then
  $\lambda = \mu$ and $\phi=\mathrm{id}$.
\label{prop:noauto-noniso}
\end{proposition}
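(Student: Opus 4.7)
The plan is to single out a ``canonical'' vertex of $\G_\lambda$ characterized by its signature alone, force any isomorphism $\phi$ to send the canonical vertex of $\G_\lambda$ to that of $\G_\mu$, and then propagate along edges using axiom (ax$1$) to conclude $\phi=\mathrm{id}$.

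Concretely, let $T_\lambda$ be the row-superstandard tableau of shape $\lambda$ (entries $1,\dots,\lambda_1$ in the bottom row, $\lambda_1+1,\dots,\lambda_1+\lambda_2$ in the next, and so on), and let $\sigma_\lambda\in\{\pm1\}^{n-1}$ be the signature whose $-1$'s sit exactly at positions $\lambda_1,\lambda_1+\lambda_2,\dots,\lambda_1+\cdots+\lambda_{\ell-1}$, so that its maximal runs of $+1$'s have, read left to right, the lengths $\lambda_1-1,\dots,\lambda_\ell-1$. A direct reading-word computation shows that $\sigma(T_\lambda)=\sigma_\lambda$.

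The combinatorial heart of the argument is the following lemma: \emph{if $T\in\mathrm{SYT}(\nu)$ has $\sigma(T)=\sigma_\lambda$, then $\nu$ dominates $\lambda$, with equality of shapes forcing $T=T_\lambda$.} To prove it, I would partition the entries of $T$ into blocks $B_i=\{\lambda_1+\cdots+\lambda_{i-1}+1,\dots,\lambda_1+\cdots+\lambda_i\}$ and let $\rho^{(i)}\subseteq\nu$ be the sub-partition they occupy. Because $\sigma_\lambda$ has no $-1$ inside a block, each block forms a horizontal strip; because the single $-1$ between consecutive blocks is a descent, the leftmost cell of $B_{i+1}$ sits strictly above the rightmost cell of $B_i$. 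A standard Pieri-style induction on $i$ then delivers the dominance inequalities $\rho^{(i)}_1+\cdots+\rho^{(i)}_j\ge\lambda_1+\cdots+\lambda_j$, with equality at every step when $\nu=\lambda$; in that case $\rho^{(i)}=(\lambda_1,\dots,\lambda_i)$ and $T$ is forced to be $T_\lambda$.

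Given the lemma, the proposition follows in three short steps. First, since $\phi$ preserves signatures by Definition~\ref{defn:isomorphism}, $\phi(T_\lambda)$ is a shape-$\mu$ tableau with signature $\sigma_\lambda$, so $\mu\succeq\lambda$; applying the same argument to $\phi^{-1}(T_\mu)$ gives $\lambda\succeq\mu$, hence $\lambda=\mu$. Second, the uniqueness clause of the lemma forces $\phi(T_\lambda)=T_\lambda$. Third, $\G_\lambda$ is connected by Proposition~\ref{prop:deshape}, so every vertex $U$ is joined to $T_\lambda$ by a path of colored edges; axiom (ax$1$) guarantees a unique $i$-neighbor whenever one exists, so walking along the path and inducting on its length shows that $\phi$ must fix every vertex. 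I expect the Pieri/dominance lemma to be the only substantive step---it is a hands-on incarnation of the classical facts $K_{\nu\lambda}>0 \iff \nu\succeq\lambda$ and $K_{\lambda\lambda}=1$---while everything else is a formal consequence of the dual equivalence axioms.
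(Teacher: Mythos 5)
Your proposal is correct and follows essentially the same route as the paper: identify the row-superstandard tableau $T_\lambda$ via its signature, use the horizontal-strip/dominance observation to force $\lambda=\mu$ and $\phi(T_\lambda)=T_\lambda$, then propagate identity along edges using the uniqueness in axiom (ax$1$) and connectivity from Proposition~\ref{prop:deshape}. The Pieri-style dominance lemma you isolate is exactly the observation the paper makes (more tersely) at the start of its proof.
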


\begin{proof}
  Let $T_{\lambda}$ be the tableau obtained by filling the numbers 1
  through $n$ into the rows of $\lambda$ from left to right, bottom to
  top, in which case $\sigma(T_{\lambda}) = +^{\lambda_1\!-\!1}, -,
  +^{\lambda_2\!-\!1}, -, \cdots$. For any standard tableau $T$ such
  that $\sigma(T) = \sigma(T_{\lambda})$, the numbers $1$ through
  $\lambda_1$, and also $\lambda_1 + 1$ through $\lambda_1 +
  \lambda_2$, and so on, must form horizontal strips. In particular,
  if $\sigma(T) = \sigma(T_{\lambda})$ for some $T$ of shape $\mu$,
  then $\lambda \leq \mu$ with equality if and only if $T =
  T_{\lambda}$.

  Suppose $\phi : \G_{\lambda} \rightarrow \G_{\mu}$ is an
  isomorphism. Let $T_{\lambda}$ be as above for $\lambda$, and let
  $T_{\mu}$ be the corresponding tableau for $\mu$. Then since
  $\sigma(\phi(T_{\lambda})) = \sigma(T_{\lambda})$, $\lambda \leq
  \mu$. Conversely, since $\sigma(\phi^{-1}(T_{\mu})) =
  \sigma(T_{\mu})$, $\mu \leq \lambda$.  Therefore $\lambda=\mu$.
  Furthermore, $\phi(T_{\lambda}) = T_{\lambda}$. For $T \in
  \mathrm{SYT}(\lambda)$ such that $\{T_{\lambda},T\} \in E_i$, we
  have $\{T_{\lambda},\phi(T)\} \in E_i$, so $\phi(T) = T$ by dual
  equivalence axiom $1$.  Extending this, every tableau connected to a
  fixed point by some sequence of edges is also a fixed point for
  $\phi$, hence $\phi = \mathrm{id}$ on each $\G_{\lambda}$ by
  Proposition~\ref{prop:deshape}.
\end{proof}

In order to avoid cumbersome notation, as we investigate the
connection between an arbitrary dual equivalence graph and the
standard one, we will often abuse notation by simultaneously referring
to $\sigma$ and $E$ as the signature function and edge set for both
graphs.

\begin{definition}
  Let $\G = (V,\sigma,E)$ be a signed, colored graph of type $(n,N)$
  satisfying axiom $1$. For $1 < i < N$, we say that a vertex $w \in
  V$ \emph{admits an $i$-neighbor} if $\sigma(w)_{\imo} =
  -\sigma(w)_i$.
\label{defn:neighbor}
\end{definition}

For $1 < i < n$, if $\sigma(w)_{\imo} = -\sigma(w)_i$ for some $w \in
V$, then axiom $1$ implies the existence of $x \in V$ such that
$\{w,x\} \in E_i$. That is, if $w$ admits an $i$-neighbor for some
$1<i<n$, then $w$ has an $i$-neighbor in $\G$. For $n \leq i < N$,
though $i$-edges do not exist in $\G$, if $\G$ were the restriction of
a graph of type $(\ipo,N)$ also satisfying axiom $1$, then the
condition $\sigma(w)_{\imo} = -\sigma(w)_i$ would imply the existence
of a vertex $x$ such that $\{w,x\} \in E_{i}$ in the type $(\ipo,N)$
graph. When convenient, $E_{i}$ may be regarded as an involution,
where $E_i(w) = x$ if $w$ admits an $i$-neighbor and $\{w,x\}
\in E_{i}$, and otherwise $E_i(w) = w$.

Recall the notion of augmenting a partition $\lambda$ by a skew
tableau $A$ and the resulting dual equivalence graph $\G_{\lambda,A}$
from Definition~\ref{defn:augment}.

\begin{lemma}
  Let $\G = (V,\sigma,E)$ be a connected dual equivalence graph of
  type $(n,N)$ with $n<N$, and let $\phi$ be an isomorphism from the
  $(n,n)$-restriction of $\G$ to $\G_{\lambda}$ for some partition
  $\lambda$ of $n$.  Then there exists a semi-standard tableau $A$ of
  shape $\rho/\lambda$, $|\rho| = N$, with entries $n+1,\ldots,N$ such
  that $\phi$ gives an isomorphism from $\G$ to $\G_{\lambda,A}$.
  Moreover, the position of the cell of $A$ containing $n+1$ is
  unique.
\label{lem:extend-signs}
\end{lemma}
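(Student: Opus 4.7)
The plan is to determine the cell of $A$ containing $n+1$ via a monotonicity argument along $E_{n-1}$-edges, then to build the remaining cells of $A$ by induction on $N-n$. First I will establish \emph{signature rigidity}: for each $k \geq n+1$, the value $\sigma(v)_k$ is independent of $v$. Indeed, any edge in $E_i$ with $2 \leq i \leq n-1$ alters $\sigma$ only in positions $i-2,\ldots,i+1 \leq n$ by axioms~2 and~3, so positions $\geq n+1$ are preserved along every edge; connectedness of $\G$ then yields common values $\epsilon_{n+1},\ldots,\epsilon_{N-1}$. The same argument applied to edges $E_j$ with $j \leq n-2$ also preserves position $n$, so on each connected component $\C$ of the $(n-1,N)$-restriction $\G^{\circ}$ of $\G$ the value $s_\C := \sigma(v)_n$ is constant. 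Restricting $\phi$ and applying Proposition~\ref{prop:deshape} identifies $\C$ with $\G_{\lambda \setminus r_\C}$ for a unique removable corner $r_\C$ of $\lambda$ (the common cell of $n$ in $\phi(v)$ for $v \in \C$), giving a bijection between $\G^{\circ}$-components and removable corners of $\lambda$.

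Next, I will order the removable corners of $\lambda$ by increasing content as $r_1,\ldots,r_k$ with corresponding signs $s_1,\ldots,s_k$. The addable corners of $\lambda$ interleave these: write them $y_0,\ldots,y_k$ with $c(y_0) < c(r_1) < c(y_1) < \cdots < c(r_k) < c(y_k)$. The core claim is that $(s_1,\ldots,s_k)$ is monotone nonincreasing; granting this, the pattern is $(+^j,-^{k-j})$ for a unique $j$, and I will take $x_{n+1} := y_j$, the unique addable corner satisfying $s_\C = +1 \iff c(r_\C) < c(x_{n+1})$. To prove monotonicity, suppose $s_i = -1$ and $s_{i+1} = +1$. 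By axiom~6 at color $n-1$, there is a direct $E_{n-1}$-edge $\{v,v'\}$ with $v \in \C_i$, $v' \in \C_{i+1}$. Since $\C_i \neq \C_{i+1}$, the elementary dual equivalence $d_{n-1}$ taking $\phi(v)$ to $\phi(v')$ must swap the entries $n-1$ and $n$ (the alternative of swapping $n-2$ with $n-1$ would preserve the cell of $n$, contradicting $\C_i \neq \C_{i+1}$). Hence in $\phi(v)$ the cells of $n-1$ and $n$ are $r_{i+1}$ and $r_i$ respectively, and $c(r_i) < c(r_{i+1})$ forces $\sigma(v)_{n-1} = -1$. But then $\sigma(v)_n = s_i = -1$ while $\sigma(v')_n = s_{i+1} = +1$, so axiom~3 demands $\sigma(v)_n = -\sigma(v)_{n-1} = +1$, a contradiction. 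Uniqueness of $x_{n+1}$ is also immediate: if another addable corner $y_{j'} \neq y_j$ worked, any $r_i$ with content strictly between $c(y_j)$ and $c(y_{j'})$ would force contradictory signs $s_i$.

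With $x_{n+1}$ pinned down, the remaining cells of $A$ are determined inductively using the constants $\epsilon_{n+1},\ldots,\epsilon_{N-1}$: at each step the relevant signature position is already uniform across $V$, so only a single content comparison with the previously placed cell is needed to extend $A$ by one cell consistently. Setting $\Phi(v) := \phi(v) \cup A$ defines the desired isomorphism: signatures match everywhere by construction, $E_i$-edges for $i \leq n-1$ are preserved because they only act on entries $\leq n$ where $\phi$ is already an isomorphism, and Lemma~\ref{lem:onto} supplies surjectivity. The main difficulty is the monotonicity of $(s_i)$; once axioms~3 and~6 together force the monotonicity, the rest---signature rigidity, uniqueness of $x_{n+1}$, and the inductive completion of $A$---is routine bookkeeping with the axioms.
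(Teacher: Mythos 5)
Your proof is correct and follows essentially the same route as the paper's: constancy of $\sigma_h$ for $h>n$ via axiom 2, identification of the components of the $(n\!-\!1,N)$-restriction with removable corners of $\lambda$ via $\phi$ and Proposition~\ref{prop:deshape}, and monotonicity of $\sigma_n$ along the corners in content order forced by axioms 3 and 6 (axiom 6 supplying a direct $E_{n-1}$-edge between any two such components), which pins down the unique addable corner for $n+1$. The only differences are presentational: you run the axiom-3 propagation in contrapositive form and defer the (non-unique) placement of $n+2,\ldots,N$ to the end, whereas the paper dispenses with those cells first.
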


\begin{proof}
  By axiom $2$ and the fact that $\G$ is connected, $\sigma_h$ is
  constant on $\G$ for $h \geq \npo$. Therefore once a suitable cell
  for $\npo$ has been chosen, the cells for $n+2, \cdots, N$ may be
  chosen in any way that gives the correct signature. One solution is
  to place $j$ north of the first column if $\sigma_{j-1} = -1$ or
  east of the first row if $\sigma_{j-1} = +1$ for
  $j=n+2,\cdots,N$. Assume, then, that $N = \npo$.
  
  By dual equivalence axiom $2$, $\sigma_n$ is constant on connected
  components of the $(\nmo,\npo)$-restriction of $\G$. By
  Proposition~\ref{prop:deshape}, a connected component of the
  $(\nmo,\nmo)$-restriction of $\G_{\lambda}$ consists of all standard
  Young tableaux where $n$ lies in a particular northeastern cell of
  $\lambda$. Therefore, for each connected component of the
  $(\nmo,\npo)$-restriction of $\G$, we may identify its image under
  $\phi$ with $\G_{\mu}$ for some partition $\mu \subset \lambda$,
  $|\mu| = \nmo$, with $n$ lying in position $\lambda/\mu$. We will
  show that $\sigma_n$ has the monotonicity property on connected
  components of the $(\nmo,\npo)$-restriction of $\G$ depicted in
  Figure~\ref{fig:monotonic}, i.e., there is an inner corner above
  which $\sigma_n = +1$ and below which $\sigma_n = -1$.

  \begin{figure}[ht]
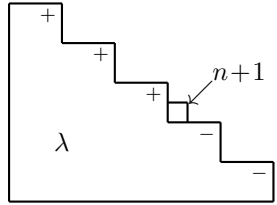

    \begin{center}
      \psset{xunit=2em}
      \psset{yunit=1.5em}
      \pspicture(0,0)(5,5)
      \psline(0,0)(0,5)
      \psline(1,4)(1,5)
      \psline(2,3)(2,4)
      \psline(3,2)(3,3)
      \psline(4,1)(4,2)
      \psline(5,0)(5,1)
      \psline(0,0)(5,0)
      \psline(4,1)(5,1)
      \psline(3,2)(4,2)
      \psline(2,3)(3,3)
      \psline(1,4)(2,4)
      \psline(0,5)(1,5)
      \rput(0.75,4.7){$_+$}
      \rput(1.75,3.7){$_+$}
      \rput(2.75,2.7){$_+$}
      \rput(3.75,1.7){$_-$}
      \rput(4.75,0.7){$_-$}
      \psline(3,2.5)(3.375,2.5)
      \psline(3.375,2)(3.375,2.5)
      \rput(3.625,2.75){$\swarrow$}
      \rput(4.35,3.25){$\npo$}
      \rput(1,1.5){$\lambda$}
      \endpspicture
      \caption{\label{fig:monotonic} Identifying the unique position
        for $\npo$ based on $\sigma_n$.}
    \end{center}
  \end{figure}

  Let $\C$ and $\mathcal{D}$ be two distinct connected components of
  the $(\nmo,\npo)$-restriction of $\G$ such that there exist vertices
  $v$ of $\C$ and $u$ of $\mathcal{D}$ with $\{v,u\} \in
  E_{\nmo}$. Let $\phi(\C) \cong \G_{\mu}$, and let $\phi(\mathcal{D})
  \cong \G_{\nu}$. Since $\{v,u\} \in E_{\nmo}$, $\phi(v)$ must have
  $\nmo$ in position $\lambda/\mu$ with $\nmt$ lying between $\nmo$
  and $n$ in the content reading word. Since $\phi$ preserves
  $E_{\nmo}$ edges, $\phi(u)$ must be the result of an elementary dual
  equivalence on $\phi(v)$ for $\nmt, \nmo, n$, which will necessarily
  interchange $\nmo$ and $n$. Since $\phi$ preserves signatures,
  $\lambda/\nu$ lies northwest of the position of $\lambda/\mu$ if and
  only if $\sigma(v)_{\nmt,\nmo} = + -$ and $\sigma(u)_{\nmt,\nmo} = -
  +$. If $\lambda/\nu$ lies northwest of the position of $\lambda/\mu$
  and $\sigma(v)_{n} = -1$, then that $\sigma(v)_{n} =
  \sigma(v)_{\nmo}$. Thus, by axiom 3, $\sigma(u)_{n} = \sigma(v)_{n}
  = -1$. Similarly, if $\lambda/\nu$ lies northwest of the position of
  $\lambda/\mu$ and $\sigma(u)_{n} = +1$, then $\sigma(u)_{n} =
  \sigma(u)_{\nmo}$. Thus, by axiom 3, $\sigma(v)_{n} = \sigma(u)_{n}
  = +1$.

  Abusing notation and terminology, we identify $\phi(\C)$ with the
  cell of $\lambda$ in position $\lambda/\mu$, where $\phi(\C) \cong
  \G_{\mu}$. With this convention, we have shown that if $\sigma_n(\C)
  = +1$ and $\mathcal{D}$ is any component connected to $\C$ by an
  $\nmo$-edge such that $\phi(\mathcal{D})$ lies northwest of
  $\phi(\C)$, then $\sigma_n(\mathcal{D}) = +1$ as well. Similarly, if
  $\sigma_n(\C) = -1$ and $\mathcal{D}$ is any component connected to
  $\C$ by an $\nmo$-edge such that $\phi(\mathcal{D})$ lies southeast
  of $\phi(\C)$, then $\sigma_n(\mathcal{D}) = -1$ as well. By dual
  equivalence graph axiom 6, for any two distinct connected components
  $\C$ and $\mathcal{D}$ of the $(\nmo,\npo)$-restriction of $\G$ and
  any pair of vertices $w$ on $\C$ and $x$ on $\mathcal{D}$, there is
  a path from $w$ to $x$ crossing at most one, and hence exactly one,
  $\nmo$ edge. Therefore for any $\C$ and $\mathcal{D}$, there exist
  vertices $v$ of $\C$ and $u$ of $\mathcal{D}$ such that $\{v,u\} \in
  E_{\nmo}$. Hence every two connected components of the
  $(\nmo,\npo)$-restriction of $\G$ are connected by an $\nmo$-edge,
  thus establishing the monotonicity depicted in
  Figure~\ref{fig:monotonic}.

  This established, it follows that there exists a unique row such
  that $\sigma(\C)_n = -1$ whenever the $\phi(\C)$ has $n$ south of
  this row and $\sigma(\C)_n = +1$ whenever the $\phi(\C)$ has $n$
  north of this row. In this case, the cell containing $\npo$ must be
  placed at the eastern end of this pivotal row, and doing so extends
  $\phi$ to an isomorphism between $(n,\npo)$ graphs.
\end{proof}

Once Theorem~\ref{thm:isomorphic} has been proved,
Lemma~\ref{lem:extend-signs} may be used to obtain the
following generalization of Theorem~\ref{thm:isomorphic} for dual
equivalence graphs of type $(n,N)$.

\begin{corollary}
  Every connected component of a dual equivalence graph of type
  $(n,N)$ is isomorphic to $\G_{\lambda,A}$ for a unique partition
  $\lambda$ and some skew tableau $A$ of shape $\rho/\lambda$, $|\rho|
  = N$, with entries $\npo, \ldots, N$.
\end{corollary}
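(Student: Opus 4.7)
The plan is to derive this corollary as a direct consequence of Theorem~\ref{thm:isomorphic} combined with Lemma~\ref{lem:extend-signs}. Let $\mathcal{C}$ be a connected component of a dual equivalence graph of type $(n,N)$. If $n=N$ then $\G_{\lambda,A}$ coincides with $\G_{\lambda}$ for $A$ empty, and the statement is exactly Theorem~\ref{thm:isomorphic}; I therefore focus on the case $n<N$.

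First I would pass to the $(n,n)$-restriction of $\mathcal{C}$. Because this restriction shares the same vertex set and the same edges $E_2\cup\cdots\cup E_{n-1}$ as $\mathcal{C}$, altering only the signature function by truncation, connectivity is preserved and the $(n,n)$-restriction of $\mathcal{C}$ is itself a connected dual equivalence graph of type $(n,n)$. Applying Theorem~\ref{thm:isomorphic}, I obtain a unique partition $\lambda$ of $n$ and an isomorphism $\phi_0$ from the $(n,n)$-restriction of $\mathcal{C}$ onto $\G_{\lambda}$. Next, invoking Lemma~\ref{lem:extend-signs} with the connected type $(n,N)$ graph $\mathcal{C}$ and the isomorphism $\phi_0$ produces a semi-standard tableau $A$ of shape $\rho/\lambda$ with $|\rho|=N$ and entries $n+1,\ldots,N$, together with an extension of $\phi_0$ to an isomorphism $\phi:\mathcal{C}\to\G_{\lambda,A}$. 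This gives existence.

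For uniqueness of $\lambda$, suppose $\mathcal{C}\cong\G_{\lambda,A}$ and also $\mathcal{C}\cong\G_{\mu,B}$. Taking $(n,n)$-restrictions on both sides and using the remark immediately following Definition~\ref{defn:augment} that the $(n,n)$-restriction of $\G_{\lambda,A}$ is $\G_{\lambda}$, I conclude that $\G_{\lambda}\cong\G_{\mu}$, and Proposition~\ref{prop:noauto-noniso} then forces $\lambda=\mu$.

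Since the deep structural work has already been carried out in Theorem~\ref{thm:isomorphic} and Lemma~\ref{lem:extend-signs}, I do not anticipate a genuine obstacle; the only point worth checking is the trivial observation that the $(n,n)$-restriction leaves both the vertex set and the edge set of $\mathcal{C}$ unchanged and therefore preserves connectivity, which is what allows Theorem~\ref{thm:isomorphic} to apply to a single $\G_{\lambda}$ rather than a disjoint union.
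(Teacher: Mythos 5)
Your proposal is correct and follows exactly the route the paper intends: the paper states this corollary without proof, noting only that it follows by combining Theorem~\ref{thm:isomorphic} with Lemma~\ref{lem:extend-signs}, which is precisely your argument. Your added observations (that the $(n,n)$-restriction preserves connectivity, and that uniqueness of $\lambda$ follows from restricting and applying Proposition~\ref{prop:noauto-noniso}) correctly fill in the details the paper leaves implicit.
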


Finally we have all of the ingredients necessary to prove the main
result of this section.

\begin{theorem}
  Let $\G$ be a connected signed, colored graph of type $(\npo,\npo)$
  satisfying axioms $1$ through $5$ such that each connected component
  of the $(n,n)$-restriction of $\G$ is isomorphic to a standard dual
  equivalence graph. Then there exists a morphism $\phi$ from $\G$ to
  $\G_{\lambda}$ for some unique partition $\lambda$ of
  $\npo$. 
\label{thm:cover}
\end{theorem}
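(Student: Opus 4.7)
The strategy is to apply Lemma~\ref{lem:extend-signs} locally to each connected piece of the $(n, n+1)$-restriction of $\G$, and then use the $E_n$-edges together with the uniqueness statement of that lemma to force all these local pictures to live in a single $\G_\lambda$. Concretely, let $\G'$ denote the $(n, n+1)$-restriction of $\G$, which has the full length-$n$ signature but has dropped the $E_n$-edges. Its connected components coincide with those of the $(n, n)$-restriction of $\G$, so by hypothesis each such component $\C$ is a connected DEG of type $(n, n+1)$ whose $(n, n)$-restriction is isomorphic to some standard graph $\G_{\mu_\C}$. Applying Lemma~\ref{lem:extend-signs} yields an isomorphism $\phi_\C : \C \to \G_{\mu_\C, A_\C}$, where $A_\C$ is a single cell containing $n+1$ whose position is uniquely determined; let $\lambda_\C$ be the partition of $n+1$ obtained by adjoining this cell to $\mu_\C$, so $\phi_\C$ embeds $\C$ into $\G_{\lambda_\C}$ in a signature- and edge-preserving way.

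Next, for any $E_n$-edge $\{v, u\}$ of $\G$ crossing from a component $\C$ to a distinct component $\mathcal{D}$ of $\G'$, I would show that $\lambda_\C = \lambda_\mathcal{D}$ and that $\phi_\C(v)$ and $\phi_\mathcal{D}(u)$ are $E_n$-neighbors in $\G_{\lambda_\C}$. Set $T = \phi_\C(v)$. Since $\sigma(T) = \sigma(v)$ satisfies $\sigma_{n-1} = -\sigma_n$, axiom $1$ inside $\G_{\lambda_\C}$ supplies a unique $E_n$-neighbor $T'$ of $T$, and axiom $2$ forces $\sigma(T') = \sigma(u)$. Let $\mathcal{E}$ be the connected component of $T'$ in the $(n, n+1)$-restriction of $\G_{\lambda_\C}$; it has the form $\G_{\mu', A'}$ for some corner $\mu' \subset \lambda_\C$ with $A' = \lambda_\C / \mu'$. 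I would argue $\mathcal{D} \cong \mathcal{E}$ by propagating the initial pairing $u \leftrightarrow T'$ along $E_i$-neighbors for $i < n$ (each uniquely determined by axiom $1$) and invoking Proposition~\ref{prop:noauto-noniso} to force well-definedness. The uniqueness clause in Lemma~\ref{lem:extend-signs} then pins down $\mu_\mathcal{D} = \mu'$ and $A_\mathcal{D} = A'$, hence $\lambda_\mathcal{D} = \lambda_\C$. Chaining this argument across paths of $E_n$-edges, which cover $\G$ by connectivity, yields a single common partition $\lambda$ for every component.

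Finally, define $\phi : \G \to \G_\lambda$ by $\phi = \phi_\C$ on each component $\C$ of $\G'$. It preserves signatures and $E_i$-edges for $i < n$ by construction, and preserves $E_n$-edges by the previous step. Uniqueness of $\lambda$ then follows from Lemma~\ref{lem:onto} (any such morphism is surjective) combined with Proposition~\ref{prop:noauto-noniso} (distinct standard graphs are non-isomorphic), so two valid choices would be forced to coincide.

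The main obstacle I anticipate is the claim $\mathcal{D} \cong \mathcal{E}$ in the second paragraph from only a single matched vertex pair $(u, T')$. Propagating the matching along $E_i$-neighbors is forced locally by axiom $1$, but well-definedness around cycles in $\mathcal{D}$ is not automatic; my plan is to lean on Proposition~\ref{prop:noauto-noniso} (which rules out nontrivial automorphisms of standard dual equivalence graphs) together with the uniqueness of the cell containing $n+1$ in Lemma~\ref{lem:extend-signs} to conclude that any two signature-preserving matchings agreeing on one vertex must coincide globally. This is the delicate combinatorial content of the gluing, and where most of the technical work is concentrated.
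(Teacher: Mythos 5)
Your overall architecture---apply Lemma~\ref{lem:extend-signs} to each connected component of the $(n,\npo)$-restriction and then glue the local isomorphisms across $E_n$-edges---is the same as the paper's, but the step you yourself flag as delicate is where the argument genuinely breaks down, and the tools you propose for it do not suffice. The claim $\mathcal{D}\cong\mathcal{E}$ cannot be obtained by propagating the single matched pair $(u,T')$ along $E_i$-neighbors. First, the starting point is already unjustified: axiom~$2$ does not force $\sigma(T')=\sigma(u)$, because it leaves the behavior of $\sigma_{n-2}$ across an $E_n$-edge undetermined (it flips exactly when the edge is a double edge with $E_{\nmo}$, which is not read off from $\sigma(v)$ alone). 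Second, and more seriously, even granting $\sigma(u)=\sigma(T')$, a single pair of vertices with equal signatures in two augmented standard graphs does not imply the graphs are isomorphic---for instance $\G_{(4,1)}$ and $\G_{(3,2)}$ share vertices of signature $+-++$---so the propagation can fail to be even \emph{locally} consistent: after a few steps the two signatures may disagree at positions $i\pm2$, where axiom~$2$ is silent, and the matching collapses. Proposition~\ref{prop:noauto-noniso} rules out nontrivial automorphisms of a single standard graph; it cannot manufacture an isomorphism between two a priori different graphs from one matched vertex, and well-definedness around cycles is not the real obstruction.

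What the paper actually does here is qualitatively different and uses axioms $4$ and $5$ in an essential way. When $\nmo$ lies between $n$ and $\npo$ in $T$, it isolates the set $\mathcal{T}$ of tableaux with $n$ and $\npo$ fixed and $\nmo$ anywhere between them; this set is large enough to determine $\mu$ and $A$ (hence $\lambda$), and---this is the key point---every edge needed to generate it ($E_h$ for $h\le \nmh$ together with a specific family of $E_{\nmt}$ edges identified via axiom~$4$) provably commutes with $E_n$ by axioms $4$ and $5$. That commutation transports the \emph{entire} shape-determining substructure, not just one vertex, across the $E_n$-edge into $\mathcal{D}$, forcing $\lambda_{\mathcal{D}}=\lambda_{\C}$ and $\psi(u)=E_n(T)$. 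You also omit the complementary case: when $\npo$ lies between $n$ and $\nmo$ in $T$, the $E_n$-edge turns out to be internal ($\C=\mathcal{D}$), and one must still verify that $\phi_{\C}$ respects it; the paper does this with a separate diagram chase exploiting a nearby tableau where $E_n$ and $E_{\nmo}$ form a double edge. Without these two arguments the gluing is not established.
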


\begin{proof}
  When $\npo=2$ or, more generally, when $\G$ has no $n$-edges, the
  result follows immediately from
  Lemma~\ref{lem:extend-signs}. Therefore we proceed by induction,
  assuming that $\G$ has at least one $n$-edge and assuming the result
  for graphs of type $(n,n)$.

  By induction, for every connected component $\C$ of the
  $(n,\npo)$-restriction of $\G$, we have an isomorphism from the
  $(n,n)$-restriction of $\C$ to $\G_{\mu}$ for a unique partition
  $\mu$ of $n$. By Lemma~\ref{lem:extend-signs}, this isomorphism
  extends to an isomorphism from $\C$ to $\G_{\mu,A}$ for a unique
  augmenting tableau $A$, say with shape $\lambda/\mu$. We will show
  that for any $\C$ the shape of $\mu$ augmented with $A$ is the
  same and that we may glue these isomorphisms together to obtain a
  morphism from $\G$ to $\G_{\lambda}$.

  \begin{figure}[ht]
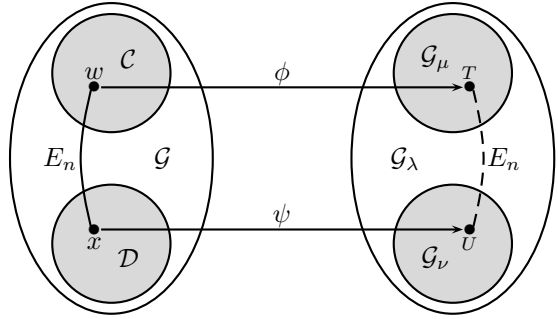

    \begin{center}
      \psset{xunit=3ex}
      \psset{yunit=2.5ex}
      \pspicture(0,0)(14,10)
      \pscircle[fillstyle=solid,fillcolor=lightgray](12,2){.8}
      \rput(12.5,2.5){$\bullet$}
      \rput(12.5,2){$_U$}
      \rput(11.5,1.5){$\G_{\nu}$}
      \pscircle[fillstyle=solid,fillcolor=lightgray](12,8){.8}
      \rput(12.5,7.5){$\bullet$}
      \rput(12.5,8){$_T$}
      \rput(11.5,8.5){$\G_{\mu}$}
      \psellipse(12,5)(3,5.5)
      \rput(10.6,5){$\G_{\lambda}$}
      \pscurve[linestyle=dashed](12.6,2.6)(12.8,3.75)(12.9,5)(12.8,6.25)(12.6,7.4)
      \rput(13.5,5){$E_n$}
      \pscircle[fillstyle=solid,fillcolor=lightgray](2,2){.8}
      \rput(1.5,2.5){$\bullet$}
      \rput(1.5,2){$x$}
      \rput(2.5,1.5){$\mathcal{D}$}
      \pscircle[fillstyle=solid,fillcolor=lightgray](2,8){.8}
      \rput(1.5,7.5){$\bullet$}
      \rput(1.5,8){$w$}
      \rput(2.5,8.5){$\C$}
      \psellipse(2,5)(3,5.5)
      \rput(3.5,5){$\G$}
      \pscurve(1.4,2.6)(1.2,3.75)(1.1,5)(1.2,6.25)(1.4,7.4)
      \rput(0.5,5){$E_n$}
      \psline{->}(1.7,7.5)(12.3,7.5)
      \rput(7,8){$\phi$}
      \psline{->}(1.7,2.5)(12.3,2.5)
      \rput(7,3){$\psi$}
      \endpspicture
      \caption{\label{fig:extend} An illustration of the gluing
        process.}
    \end{center}
  \end{figure}

  Suppose $\{w,x\} \in E_{n}$. Let $\C$ (resp. $\mathcal{D}$) denote
  the connected component of the $(n,\npo)$-restriction of $\G$
  containing $w$ (resp. $x$). Let $\phi$ (resp. $\psi$) be the
  isomorphism from $\C$ (resp. $\mathcal{D}$) to $\G_{\mu,A}$
  (resp. $\G_{\nu,B}$), and set $T = \phi(w)$; see
  Figure~\ref{fig:extend}. We will show that $\psi(x) = E_n(T)$, and
  hence if $\mu,A$ has shape $\lambda$, then so does $\nu,B$ and the
  maps $\phi$ and $\psi$ glue together to give an morphism from $\C
  \cup \mathcal{D}$ to $\G_{\lambda}$ that preserves $n$-edges. There
  are two cases to consider, based on the relative positions of
  $\nmo,n,\npo$ in $T$, regarded as a tableau of shape $\lambda$.

  First suppose that $\npo$ lies between $n$ and $\nmo$ in the reading
  word of $T$. We will show that, in this case, $\C =
  \mathcal{D}$. Since $\npo$ lies between $n$ and $\nmo$ in the
  reading word of $T$, both $\nmo$ and $n$ must be northeastern
  corners of $\mu$, and so there is a cell with entry less than $\nmo$
  that also lies between them. By Proposition~\ref{prop:deshape},
  there exists a tableau $T'$ with $\nmo,n,\npo$ in the same positions
  as in $T$, but now with $\nmt$ lying between $n$ and $\nmo$ in the
  reading word of $T'$. Furthermore, since both $T$ and $T'$ lie on
  the $(\nmt,\npo)$-restriction of $\G_{\mu,A}$, there is a path from
  $T$ to $T'$ in $\G_{\mu,A}$ using only edges $E_h$ with $h \leq
  \nmh$. Let $U' = E_n(T')$. Since $\nmt$ lies between $n$ and $\nmo$
  in $U'$, we have $U' = E_{\nmo}(T')$ as well. By axiom 5, all edges
  in the path from $T$ to $T'$ commute with $E_n$, and so the same
  path takes $U = E_n(T)$ to $U'$, and each pair of corresponding
  tableaux on the two paths is connected by an $E_n$ edge; see
  Figure~\ref{fig:double}. We now embark on a diagram chase around
  Figure~\ref{fig:double}, from $w$ to $x$ and from $T$ to $U$, to
  show that $\C = \mathcal{D}$.

  \begin{figure}[ht]
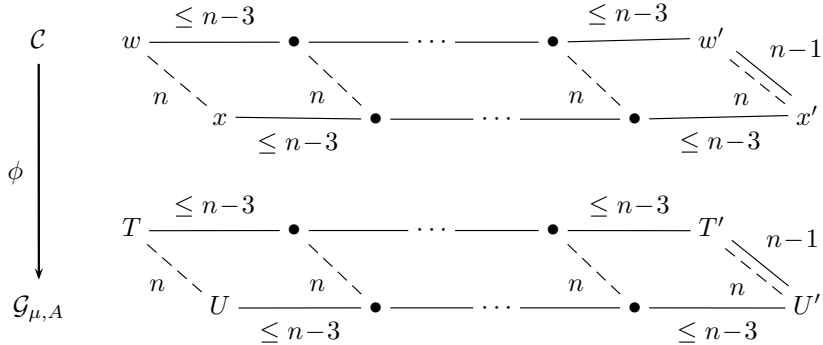

    \begin{center}
      \begin{displaymath}
        \begin{array}{\cs{5}\cs{6}\cs{5}\cs{6}\cs{3}\cs{3}\cs{3}\cs{6}\cs{5}\cs{6}c}
          \rnode{C}{\C} & \rnode{w}{w} & & \rnode{wl}{\B} & & \rnode{wm}{\cdots} 
          & & \rnode{wr}{\B} & & \rnode{wd}{w'} & \\[4ex]
          & & \Rnode{x}{x} & & \rnode{xl}{\B} & & \rnode{xm}{\cdots} 
          & & \rnode{xr}{\B} & & \rnode{xd}{x'} \\[7ex]
          & \Rnode[vref=0.5ex]{T}{T} & & \rnode{Tl}{\B} & & \rnode{Tm}{\cdots} 
          & & \rnode{Tr}{\B} & & \Rnode[vref=0.5ex]{Td}{T'} & \\[4ex]
          \rnode{G}{\G_{\mu,A}} & & \Rnode[vref=0.5ex]{U}{U} & & \rnode{Ul}{\B} 
          & & \rnode{Um}{\cdots} & & \rnode{Ur}{\B} & & \Rnode[vref=0.5ex]{Ud}{U'} 
        \end{array}
        \psset{nodesep=3pt,linewidth=.1ex}
        \ncline {w}{wl}   \naput{\leq \nmh}
        \ncline {wl}{wm}  
        \ncline {wm}{wr}  
        \ncline {wr}{wd}  \naput{\leq \nmh}
        \ncline {x}{xl}   \nbput{\leq \nmh}
        \ncline {xl}{xm}  
        \ncline {xm}{xr}  
        \ncline {xr}{xd}  \nbput{\leq \nmh}
        \ncline[linestyle=dashed] {w}{x} \nbput{n}
        \ncline[linestyle=dashed] {wl}{xl} \nbput{n}
        \ncline[linestyle=dashed] {wr}{xr} \nbput{n}
        \ncline[offset=2pt] {wd}{xd} \naput {\nmo}
        \ncline[linestyle=dashed,offset=2pt] {xd}{wd} \naput{n}
        \ncline[linewidth=.2ex,nodesep=6pt]{->} {C}{G}   \nbput{\phi}
        \ncline {T}{Tl}   \naput{\leq \nmh}
        \ncline {Tl}{Tm}  
        \ncline {Tm}{Tr}  
        \ncline {Tr}{Td}  \naput{\leq \nmh}
        \ncline {U}{Ul}   \nbput{\leq \nmh}
        \ncline {Ul}{Um}  
        \ncline {Um}{Ur}  
        \ncline {Ur}{Ud}  \nbput{\leq \nmh}
        \ncline[linestyle=dashed] {T}{U} \nbput{n}
        \ncline[linestyle=dashed] {Tl}{Ul} \nbput{n}
        \ncline[linestyle=dashed] {Tr}{Ur} \nbput{n}
        \ncline[offset=2pt] {Td}{Ud} \naput {\nmo}
        \ncline[linestyle=dashed,offset=2pt] {Ud}{Td} \naput{n}
      \end{displaymath}
    \end{center}
    \caption{\label{fig:double} Illustration of the path from $T$ to
      $U$ in $\G_{\mu,A}$ and its lift in $\C$.}
  \end{figure}

  Since the path from $T$ to $T'$ to $U'$ to $U$ uses only edges from
  $\G_{\mu,A}$, this path lifts via the isomorphism $\phi$ to a path
  in $\C$. Let $w' = \phi^{-1}(T')$ and $x' = \phi^{-1}(U')$. We will
  show that $x = \phi^{-1}(U)$ and so lies on $\C$. Since $\phi$
  preserves signatures, both $w'$ and $x'$ must admit an $n$-edge in
  $\G$. As summarized in Figure~\ref{fig:lambda4}, axioms 3 and 4
  dictate that the only way for two vertices connected by an
  $\nmo$-edge both to admit an $n$-edge is for $\{w',x'\} \in E_n$ in
  $\G$. By axioms 2 and 5, the path from $w'$ to $w$ gives an
  identical path from $x'$ to $\phi^{-1}(U)$. Since each corresponding
  pair along the two paths must be paired by an $n$-edge, we have
  $\phi^{-1}(U) = E_n(w) = x$, as desired.  Therefore $x$ lies on
  $\C$, and $\phi$ respects the $n$-edge between $w$ and $x$. In this
  case $\C=\mathcal{D}$ and, by Proposition~\ref{prop:noauto-noniso},
  $\psi=\phi$.

  For the second case, suppose that $\nmo$ lies between $n$ and $\npo$
  in $T$. Consider the subset of tableaux in $\G_{\mu,A}$ with $n$ and
  $\npo$ fixed in the same position as in $T$ and $\nmo$ lying
  anywhere between them in the reading word. We claim that this set
  uniquely determines $\mu$ and $A$. In terms of the graph structure,
  these are all tableaux reachable from $T$ using edges $E_h$ with $h
  \leq \nmh$ and a certain subset of the $E_{\nmt}$ edges. We will
  return soon to the question of which $E_{\nmt}$ edges these are. For
  now, let $\mathcal{T}$ denote the union of the graphs $\G_{\rho,R}$,
  where $\rho$ is a partition of $\nmt$ with augmenting tableau $R$
  consisting of a single cell containing $\nmo$ such that $\rho,R$ is
  the shape of $T$ with $n$ and $\npo$ removed and the augmented cell
  of $R$ lies strictly between the positions of $n$ and $\npo$ in
  $T$. Clearly the set of $\rho,R$ uniquely determines the cells
  containing $n$ and $\npo$, and so uniquely determines
  $\lambda$. Furthermore, which of $n,\npo$ occupies which cell is
  determined by $\sigma_n$, and this is constant on this subset by
  axioms $2$ and $3$. Lifting $\mathcal{T}$ to $\C$ using $\phi^{-1}$
  gives rise to an induced subgraph of $\C$ that completely determines
  $\lambda$ as well as the positions of $n$ and $\npo$ in the image of
  this subgraph under $\phi$. We will show that the corresponding
  induced subgraph for $\mathcal{D}$ is isomorphic but with the
  opposite sign for $\sigma_n$.
  
  \begin{figure}[ht]
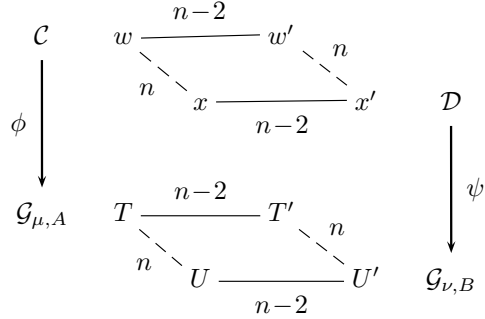

    \begin{center}
      \begin{displaymath}
        \begin{array}{\cs{4}\cs{5}\cs{5}\cs{5}\cs{4}c}
          \rnode{C}{\C} & \rnode{w}{w} & & \rnode{wd}{w'} & & \\[3ex]
          & & \Rnode{x}{x} & & \rnode{xd}{x'} & \rnode{D}{\mathcal{D}} \\[7ex]
          \rnode{A}{\G_{\mu,A}} & \Rnode[vref=0.5ex]{T}{T} & & \Rnode[vref=0.5ex]{Td}{T'} & & \\[3ex]
          & & \Rnode[vref=0.5ex]{U}{U} & & \Rnode[vref=0.5ex]{Ud}{U'} & \rnode{B}{\G_{\nu,B}} 
        \end{array}
        \psset{nodesep=3pt,linewidth=.1ex}
        \ncline {w}{wd}   \naput{\nmt}
        \ncline {x}{xd}   \nbput{\nmt}
        \ncline {T}{Td}   \naput{\nmt}
        \ncline {U}{Ud}   \nbput{\nmt}
        \ncline[linestyle=dashed] {w}{x} \nbput{n}
        \ncline[linestyle=dashed] {wd}{xd} \naput{n}
        \ncline[linestyle=dashed] {T}{U} \nbput{n}
        \ncline[linestyle=dashed] {Td}{Ud} \naput{n}
        \ncline[linewidth=.2ex,nodesep=6pt]{->} {C}{A}   \nbput{\phi}
        \ncline[linewidth=.2ex,nodesep=6pt]{->} {D}{B}   \naput{\psi}
      \end{displaymath}
    \end{center}
    \caption{\label{fig:typeC} Illustration of $E_{\nmt}$ edges on
      $\mathcal{T} \cup \mathcal{U}$ and their lifts in $\C \cup
      \mathcal{D}$.}
  \end{figure}

  To prove the assertion, we return to the question of which
  $E_{\nmt}$ edges are allowed in generating $\mathcal{T}$. Any
  $E_{\nmt}$ edge that keeps $\nmo$ between $n$ and $\npo$ clearly
  does not change $\sigma_{\nmo}$ or $\sigma_{n}$. Therefore such
  $E_{\nmt}$ edges must pair vertices both of which admit an
  $n$-neighbor. Further, neither of these vertices may have $E_n$ as a
  double edge with $E_{\nmo}$ since $\nmo$ lies between $n$ and
  $\npo$. By axiom 4, the $E_{\nmt}$ edges that meet these conditions
  are precisely those in the lower component of
  Figure~\ref{fig:lambda5}. In particular, these $E_{\nmt}$ edges
  commute with $E_n$ edges as depicted in Figure~\ref{fig:typeC}. By
  axiom 5, $E_h$ also commutes with $E_n$ for $h \leq \nmh$. Therefore
  all edges on the induced subgraph of $\C$ containing
  $\phi^{-1}(\mathcal{T})$ commute with $E_n$. Therefore $E_n$ may be
  regarded as an isomorphism from this subgraph to $\mathcal{X} =
  E_n(\phi^{-1}(\mathcal{T}))$. Since $\{w,x\} \in E_n$ and $w \in
  \phi^{-1}(\mathcal{T})$, we have $x \in \mathcal{X}$. Since all
  edges of the induced subgraph have color at most $\nmt$, it follows
  that $\mathcal{X} \subset \mathcal{D}$.

  Let $U = \psi(x)$, and let $\mathcal{U} = \psi(\mathcal{X})$. Since
  $\phi,\psi$ and $E_n$ are isomorphisms of the
  $(n-2,n-2)$-restrictions, $\mathcal{U}$ together with its induced
  edges is isomorphic to $\mathcal{T}$ together with its induced
  edges, though, by axiom 1, the signs for $\sigma_n$ and
  $\sigma_{\npo}$ are reversed. By the definition of $\mathcal{T}$ and
  the fact that it uniquely determines $\mu$ and $A$, this implies
  that the tableaux in $\mathcal{U}$ have shape $\lambda$, with the
  cells containing $n$ and $\npo$ reversed from that in
  $\mathcal{T}$. In particular, $\mathcal{U} = E_n(\mathcal{T})$, that
  is to say, $\phi$ and $\psi$ glue to give a morphism from $\C \cup
  \mathcal{D} \subset \G$ to $\G_{\mu,A} \cup \G_{\nu,B} \subset
  \G_{\lambda}$ that respects $E_n$ edges of the induced subgraphs.

  Since $T$ admits an $n$-neighbor, $n$ cannot lie between $\nmo$ and
  $\npo$, so these two are the only cases. Thus we now have a
  well-defined morphism from the $(n,\npo)$-restriction of $\G$ to the
  $(n,\npo)$-restriction of $\G_{\lambda}$ that respects $n$-edges. As
  such, this map lifts to a morphism from $\G$ to $\G_{\lambda}$.
\end{proof}

By Lemma~\ref{lem:onto}, the morphism of Theorem~\ref{thm:cover} is
necessarily surjective, though in general it need not be injective.
The smallest example where injectivity fails was first observed by
Gregg Musiker in a graph of type $(6,6)$ with generating function $2
s_{(3,2,1)}(X)$; see Figure~\ref{fig:gregg} in
Appendix~\ref{app:axiom6}. 

For any $\G$ satisfying the hypotheses of Theorem~\ref{thm:cover}, the
fiber over each vertex of $\G_{\lambda}$ in the morphism from $\G$ to
$\G_{\lambda}$ has the same cardinality. Letting $\phi$ be the
morphism from $\G$ to $\G_{\lambda}$, there is a bijective
correspondence between connected components of $\phi^{-1}(\G_{\mu})$
and connected components of $\phi^{-1}(\G_{\nu})$ that follows from
the following result.

\begin{corollary}
  Let $\G$ satisfy the hypotheses of Theorem~\ref{thm:cover}, and let
  $\phi$ be the morphism from $\G$ to $\G_{\lambda}$. For any
  connected component $\C$ of the $(n,n)$-restriction of $\G$, say
  with $\phi(\C) = \G_{\mu}$, and any partition $\nu \subset \lambda$
  of size $n$, there is a unique connected component $\mathcal{D}$ of
  the $(n,n)$-restriction of $\G$ with $\phi(\mathcal{D}) = \G_{\nu}$
  that can be reached from $\C$ by crossing at most one $E_n$ edge.
  \label{cor:fibers}
\end{corollary}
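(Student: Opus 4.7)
The plan is to establish existence and uniqueness separately, drawing on the structural results already developed for the morphism $\phi$ in Theorem~\ref{thm:cover} together with axiom~$6$ applied to the standard dual equivalence graph $\G_{\lambda}$.

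For existence, I would fix any vertex $T$ in the $(n,n)$-component $\G_{\mu}$ of $\G_{\lambda}$ and apply axiom~$6$ inside $\G_{\lambda}$ to find a path from $T$ to some $U \in \G_{\nu}$ crossing at most one $E_n$ edge. Pulling this path back through $\phi$ produces a corresponding path in $\G$: starting from any $w \in \C$ with $\phi(w) = T$, each successive edge $\{S,S'\} \in E_i$ along the path lifts to a unique edge out of the previously lifted vertex by axiom~$1$ on $\G$ (the signatures agree by the morphism property), and axiom~$1$ on $\G_{\lambda}$ together with the morphism property forces the new endpoint to map to $S'$. The endpoint $x$ of the lifted path lies in some $(n,n)$-component $\mathcal{D}$ of $\G$, and Lemma~\ref{lem:onto} applied to the restricted morphism $\phi|_{\mathcal{D}} \colon \mathcal{D} \to \G_{\nu}$ gives $\phi(\mathcal{D}) = \G_{\nu}$.

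For uniqueness, suppose $\mathcal{D}$ and $\mathcal{D}'$ both map to $\G_{\nu}$ and are each reached from $\C$ by crossing at most one $E_n$ edge. If $\mu = \nu$, then $\mathcal{D} = \C$ itself qualifies via a zero-edge path, so it suffices to show that no $\mathcal{D}' \neq \C$ with image $\G_{\mu}$ can be reached from $\C$ by a single $E_n$ edge. Any such $\mathcal{D}'$ would provide an edge $\{w,x\} \in E_n$ with $\phi(w), \phi(x) \in \G_{\mu}$, so the image edge is an internal $E_n$ edge of the $(n,n)$-component $\G_{\mu}$ of $\G_{\lambda}$; this forces $\npo$ to lie between $n$ and $\nmo$ in the reading word of $\phi(w)$, which is precisely the first case of the proof of Theorem~\ref{thm:cover}, whose conclusion $\C = \mathcal{D}'$ contradicts $\mathcal{D}' \neq \C$. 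If instead $\mu \neq \nu$, then each of $\mathcal{D}, \mathcal{D}'$ is reached by exactly one $E_n$ edge, say $\{w,x\}$ and $\{w',x'\}$. The requirement $\phi(x), \phi(x') \in \G_{\nu}$ pins down the positions of $n$ and $\npo$ in both $\phi(w)$ and $\phi(w')$: $n$ must sit at $\lambda/\nu$, $\npo$ at $\lambda/\mu$, and $\nmo$ must lie between them in the reading word. Consequently both $w$ and $w'$ lie in the preimage $\phi^{-1}(\mathcal{T})$ of the subset $\mathcal{T}$ analyzed in the second case of the proof of Theorem~\ref{thm:cover}, and that case's argument shows $E_n$ carries $\phi^{-1}(\mathcal{T})$ into a single $(n,n)$-component. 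Hence $x$ and $x'$ share that component and $\mathcal{D} = \mathcal{D}'$.

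The main obstacle is to observe that the set $\mathcal{T}$ from the proof of Theorem~\ref{thm:cover} depends only on the pair $(\mu,\nu)$ and the ambient $\lambda$, not on a particular representative $w \in \C$; once this is recognized, uniqueness in the case $\mu \neq \nu$ is simply a restatement of the gluing consistency that made $\phi$ well-defined. Existence, by contrast, is a straightforward lifting argument against a morphism already known to preserve all of the axiom~$1$ data.
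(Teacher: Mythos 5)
Your proof is correct. The uniqueness half follows essentially the same route as the paper: you reduce to the two cases of the proof of Theorem~\ref{thm:cover} according to whether $\npo$ or $\nmo$ lies between the other two of $\nmo,n,\npo$ in $\phi(w)$, dispose of the case $\mu=\nu$ by the first case's conclusion $\C=\mathcal{D}$, and for $\mu\neq\nu$ observe that any two $E_n$ edges out of $\C$ landing in components over $\G_{\nu}$ must have their $\C$-endpoints in $\phi^{-1}(\mathcal{T})$ for the \emph{same} set $\mathcal{T}$ (determined by $\mu,\nu,\lambda$ alone), on which $E_n$ commutes with all induced low-color edges, forcing the targets into one component; this is exactly the paper's argument. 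Where you genuinely diverge is existence: the paper explicitly constructs a tableau $T$ of shape $\lambda$ (with $\npo$ at $\lambda/\mu$, $n$ at $\lambda/\nu$, and $\nmo$ between them, or the analogous choice when $\mu=\nu$), takes $w=\phi^{-1}(T)\cap\C$, and notes $\phi(E_n(w))=E_n(\phi(w))\in\G_{\nu}$, whereas you invoke axiom $6$ for the standard graph $\G_{\lambda}$ (available from Proposition~\ref{prop:good-defn}, so no circularity) to produce a path from $\G_{\mu}$ to $\G_{\nu}$ with at most one $E_n$ edge and then lift it edge by edge through $\phi$ via axiom $1$ on both graphs. Both are valid; the paper's construction is more direct and makes the witness vertex explicit, while your lifting argument is more structural and delegates the combinatorial construction to the already-proved axiom $6$ for $\G_{\lambda}$ (whose own proof builds essentially the same witness tableau).
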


\begin{proof}
  To prove existence, if $\nu \neq \mu$, let $T$ be a tableau of shape
  $\lambda$ with $\npo$ in position $\lambda/\mu$, $n$ in position
  $\lambda/\nu$, and $\nmo$ lying between in the reading
  word. Otherwise let $T$ be a tableau with $\npo$ in position
  $\lambda/\mu$ and $n$ and $\nmo$ lying on opposite sides in the
  reading word. Let $w$ be the unique element in $\phi^{-1}(T) \cap
  \C$. Then $w$ admits an $n$-neighbor, and, since $\phi$ is a
  morphism, $\phi(E_n(w)) = E_n(\phi(w)) \in \G_{\nu}$.

  To prove uniqueness, let $\{w,x\} \in E_n$ with $w \in \C \cong
  \G_{\mu}$ and $x \in \mathcal{D} \cong \G_{\nu}$. If $\npo$ lies
  between $n$ and $\nmo$ in $\phi(w)$, then $\mu = \nu$, and just as
  in the proof of Theorem~\ref{thm:cover}, we concluded that
  $\mathcal{D} = \C$ as desired. Alternately, assume $\nmo$ lies
  between $n$ and $\npo$ in $\phi(w)$, and suppose $\{w',x'\} \in
  E_{\nmo}$ with $w' \in \C$ and $x' \in \mathcal{D}' \cong
  G_{\nu}$. Since $\phi(w)$ and $\phi(w')$ have the same shape, and
  $E_n(\phi(w)) = \phi(E_n(w)) = \phi(x)$ and $E_n(\phi(w')) =
  \phi(E_n(w')) = \phi(x')$ have the same shape, just as in the proof
  of Theorem~\ref{thm:cover}, there must be a path from $\phi(w)$ to
  $\phi(w')$ in $\G_{\nu}$ using only edges $E_h$ with $h \leq \nmh$
  and those $E_{\nmt}$ that commute with $E_n$. Therefore this path
  gives rise to the same path from $\phi(x)$ to $\phi(x')$ in
  $\G_{\mu}$. The former path lifts to a path from $w$ to $w'$ in
  $\C$, and so the latter lifts to a path from $E_n(w)=x$ to
  $E_n(w')=x'$ in $\mathcal{D} = \mathcal{D}'$, which is as desired.
\end{proof}

In order to ensure that the morphism in the conclusion of
Theorem~\ref{thm:cover} is an isomorphism, and thereby complete the
proof of Theorem~\ref{thm:isomorphic}, we need only invoke the
heretofore uninvoked axiom 6.

\begin{proof}[Proof of Theorem~\ref{thm:isomorphic}]
  Let $\G$ be a dual equivalence graph of type $(\npo,\npo)$. We aim
  to show that $\G$ is isomorphic to $\G_{\lambda}$ for a unique
  partition $\lambda$ of $\npo$. We proceed by induction on $\npo$,
  noting that the result is trivial for $\npo = 2$.  Every connected
  component of the $(n,n)$-restriction of $\G$ is a dual equivalence
  graph, and so, by induction, is isomorphic to a standard dual
  equivalence graph. Thus, by Theorem~\ref{thm:cover}, there exists a
  morphism, say $\phi$, from $\G$ to $\G_{\lambda}$ for a unique
  partition $\lambda$ of $\npo$. By Corollary~\ref{cor:fibers}, for
  any connected component $\C$ of the $(n,n)$-restriction of $\G$ and
  any partition $\nu \subset \lambda$ of size $n$, there is a unique
  connected component $\mathcal{D}$ of the $(n,n)$-restriction of $\G$
  that can be reached from $\C$ by crossing at most one $E_n$ edge
  such that $\phi(\mathcal{D}) = \G_{\nu}$. By dual equivalence axiom
  $6$, any two connected components of the $(n,n)$-restriction of $\G$
  can be connected by a path using at most one $E_n$ edge. Therefore
  the connected components of the $(n,n)$-restriction of $\G$ are
  pairwise non-isomorphic. Hence the morphism from $\G$ to
  $\G_{\lambda}$ is injective on the $(n,\npo)$-restrictions, and so
  it is injective on all of $\G$. Surjectivity follows from
  Lemma~\ref{lem:onto}, thus $\phi$ is an isomorphism.
\end{proof}

%
\section{Abstract dual equivalence}
%
\label{sec:jdt}

\subsection{A re-characterization of dual equivalence}
\label{sec:jdt-involutions}

The inspiration for dual equivalence graphs comes from the elementary
dual equivalence involutions. In this section, we reformulate the
machinery of dual equivalence graphs back in terms of
involutions. Given a set $\mathcal{A}$ of combinatorial objects
together with a notion of a descent set $\mathrm{Des}$ sending an
object to a subset of positive integers, the goal is to define
equivalence relations on objects in $\mathcal{A}$ so that the sum over
objects in any single equivalence class is a single Schur function.

Given $(\mathcal{A}, \mathrm{Des})$ and involutions $\varphi_2,
\ldots, \varphi_{n-1}$ on $\mathcal{A}$, for $1 < h < i < n$ we
consider the \emph{restricted dual equivalence class}
$[T]_{(h-1,i+1)}$ generated by $\varphi_h,\ldots,\varphi_i$. In
addition, we consider the \emph{restricted and shifted descent set}
$\mathrm{Des}_{(h-1,i+1)}(T)$ obtained by intersecting $\mathrm{Des}(T)$
with $\{h-1,\ldots,i\}$ and subtracting $h-2$ from each element so
that $\mathrm{Des}_{(h-1,i+1)}(T) \subseteq [i-h+2]$.

\begin{definition}
  Let $\mathcal{A}$ be a finite set, and let $\mathrm{Des}$ be a map
  on $\mathcal{A}$ such that $\mathrm{Des}(T) \subseteq [n-1]$ for all
  $T \in \mathcal{A}$. A \emph{dual equivalence for
    $(\mathcal{A},\mathrm{Des})$} is a family of involutions
  $\{\varphi_i\}_{1<i<n}$ on $\mathcal{A}$ such that

  \renewcommand{\theenumi}{\roman{enumi}}
  \begin{enumerate}
  \item For all $i-h \leq 3$ and all $T \in \mathcal{A}$, there
    exists a partition $\lambda$ of $i-h+3$ such that
    \[ \sum_{U \in [T]_{(h,i)}} Q_{\mathrm{Des}_{(h,i)}(U)}(X) = s_{\lambda}(X). \] 
    
  \item For all $|i-j| \geq 3$ and all $T \in\mathcal{A}$, we have
    \begin{displaymath}
      \varphi_{j} \varphi_{i}(T) = \varphi_{i} \varphi_{j}(T).
    \end{displaymath}

  \end{enumerate}

  \label{def:strong}
\end{definition}

On the surface, Definition~\ref{def:strong} appears to have only two
conditions. However, condition (i) includes four cases, for
$i-h=0,1,2,3$. These cases correspond to the dual equivalence graph
axioms as follows: $i-h=0$ means there is one edge color, and this
corresponds to axioms $1$ and $2$; $i-h=1$ means there are two edge
colors, and this corresponds to axiom $3$ and the first half of axiom
$4$; $i-h=2$ means there are three edge colors, and this corresponds
to the second half of axiom $4$; $i-h=3$ means there are four edge
colors, and this corresponds to the axiom $6$.

\begin{theorem}
  For $\{\varphi_i\}$ a family of involutions on $\mathcal{A}$, let
  $\G = (\mathcal{A},\sigma,\Phi)$ be the corresponding signed,
  colored graph with descent signature and $i$-colored edges given by
  \begin{equation}
    \sigma(T)_i = -1 \ \Leftrightarrow \ i \in \mathrm{Des}(T)
    \hspace{3em}
    \Phi_i \ = \ \left\{ \ \{T,\varphi_i(T)\} \ | \ T
    \not\in\mathcal{A}^{\varphi_i} \right\}. 
  \end{equation}
  Then $\{\varphi_i\}$ is a dual equivalence for
  $(\mathcal{A},\mathrm{Des})$ if and only if $\G$ is a dual
  equivalence graph.
  \label{thm:involution_graph}
\end{theorem}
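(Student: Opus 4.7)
The plan is to set up the bijective dictionary between the involution data and the graph data, and then translate each axiom on either side into the matching condition on the other. Given $\{\varphi_i\}$ and $\mathrm{Des}$, the graph $\G$ is defined as in the statement; conversely, from $\G$ satisfying axiom $1$, one recovers $\varphi_i$ by setting $\varphi_i(T)=S$ if $\{T,S\}\in\Phi_i$ and $\varphi_i(T)=T$ otherwise (axiom $1$ ensures this is a well-defined involution with a unique $i$-neighbor when one exists). The signature $\sigma$ and the descent set $\mathrm{Des}$ are two encodings of the same data, so $\sigma(U)|_{\{h-1,\ldots,i\}}$ and $\mathrm{Des}_{(h,i)}(U)$ determine each other. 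Under this dictionary, the restricted class $[T]_{(h,i)}$ equals the connected component of $T$ in the subgraph of $\G$ using only edges in $\Phi_h\cup\cdots\cup\Phi_i$.

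For the $(\Leftarrow)$ direction, assume $\G$ is a dual equivalence graph. Axiom $5$ translates verbatim to $\varphi_j\varphi_i = \varphi_i\varphi_j$ whenever $|i-j|\geq 3$, giving condition (ii). For condition (i), the edge-restricted subgraph with only colors in $\{h,\ldots,i\}$ is, after reindexing, itself a dual equivalence graph (restrictions of DEGs are DEGs), and by Theorem~\ref{thm:isomorphic} each of its connected components is isomorphic to some $\G_\lambda$ with $|\lambda|=i-h+3$; equation \eqref{eqn:glamschur} then yields the required Schur identity.

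For the $(\Rightarrow)$ direction, assume $\{\varphi_i\}$ is a dual equivalence. Condition (ii) directly gives axiom $5$. For each of the remaining axioms, I apply condition (i) at the appropriate $i-h\in\{0,1,2,3\}$ and invoke Proposition~\ref{prop:quasisym} together with the linear independence of the fundamental quasisymmetric functions: this forces the restricted multiset $\{\mathrm{Des}_{(h,i)}(U) : U\in[T]_{(h,i)}\}$ to equal the corresponding multiset for $\mathrm{SYT}(\lambda)$ for some partition $\lambda$ of $i-h+3$. Since the edges inside $[T]_{(h,i)}$ are prescribed by the involutions $\varphi_h,\ldots,\varphi_i$, this pins down the local structure of $\G$ around $T$. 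Matching the cases: $i-h=0$ yields axioms $1$ and $2$ via the three partitions of $3$; $i-h=1$ yields axiom $3$ together with the two-color half of axiom $4$ via the partitions of $4$ shown in Figure~\ref{fig:lambda4}; $i-h=2$ yields the three-color half of axiom $4$ via the standard DEGs for partitions of $5$ shown in Figure~\ref{fig:lambda5}; and $i-h=3$ yields axiom $6$ via the standard DEGs for partitions of $6$.

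The main obstacle is the $i-h=3\leftrightarrow$ axiom $6$ case, since axiom $6$ is a \emph{connectivity} statement rather than a local structural one: extracting it from the single Schur identity for $|\lambda|=6$ requires first checking that every $\G_\lambda$ with $|\lambda|=6$ has the ``at most one $E_i$-edge'' connectivity property, and then transferring this property through the multiset/involution identification to $[T]_{(h,i)}$ in the general graph $\G$. A subtler bookkeeping point in the $(\Rightarrow)$ direction is the ``far'' signature-preservation clauses of axiom $2$, which are established by applying condition (i) with overlapping windows $(h',i')$ of size at most $3$ to cascade the constraint to every signature position, using condition (ii) to slide past distant indices.
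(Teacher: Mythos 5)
Your dictionary between involutions and signed, colored graphs is the right starting point, and your treatment of condition (ii) $\leftrightarrow$ axiom 5, of the local axioms 1--4 via linear independence of the $Q_\sigma$ plus finite case analysis, and of the second half of axiom 2 via commutation with distant involutions, all track the paper's argument. Two remarks, one minor and one substantive.

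Minor: in the $(\Leftarrow)$ direction you assert that the restriction to a window of colors $\{h,\ldots,i\}$ is ``itself a dual equivalence graph (restrictions of DEGs are DEGs).'' The paper only establishes this for initial-segment $(m,M)$-restrictions; for an interior window, axiom 6 for the shifted graph is not a formal consequence of the axioms. The clean route is the paper's: apply Theorem~\ref{thm:isomorphic} first to replace $\G$ by $\G_\lambda$, and then cite Haiman's result that the $d_i$ form a dual equivalence on $\mathrm{SYT}(\lambda)$, which covers all windows at once.

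Substantive: the derivation of axiom 6 from condition (i) at $i-h=3$ is a genuine gap, and your proposed fix does not close it. Checking that each $\G_\lambda$ with $|\lambda|=6$ has the ``at most one $E_i$-edge'' property and transferring it through the restricted-class identification only yields connectivity-with-one-$E_i$-edge \emph{inside a component of the four-color window} $\Phi_{i-3}\cup\cdots\cup\Phi_i$. Axiom 6 demands it inside a component of $E_2\cup\cdots\cup E_i$, which for large $i$ is a far bigger graph: two vertices can lie in the same component of $E_2\cup\cdots\cup E_{n-1}$ while lying in different components of the four-color window, and no single application of condition (i) sees both of them. The paper handles this by induction on $n$: once axioms 1--5 and the inductive axiom 6 on the $(n-1,n-1)$-restriction are in place, Theorem~\ref{thm:cover} and Corollary~\ref{cor:fibers} apply, letting one identify the relevant vertices $T,U,V,W$ with tableaux of a common shape $\kappa$ and explicitly construct representatives $T',W'$ joined by the word $\varphi_{n-1}\varphi_{n-2}\varphi_{n-4}\varphi_{n-2}\varphi_{n-1}$, which lives in a restricted graph of size $6$ where the local hypothesis applies. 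That reduction of the global connectivity statement to the size-$6$ window---via the structure theory already built for graphs satisfying axioms 1--5---is the missing idea; as the paper notes in Remark~\ref{rmk:austin}, the equivalence of axiom 6 with a local condition is a theorem (due to Roberts) and not a routine transfer.
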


\begin{proof}
  We may assume that $\G$ is connected. If $\G$ is a dual equivalence
  graph, then by Theorem~\ref{thm:isomorphic}, we may assume $\G =
  \G_{\lambda}$ for some partition $\lambda$, and the result follows
  from the observation that $d_i$ is a dual equivalence for
  $\mathrm{SYT}(\lambda)$ \cite{Haiman1992}.

  Assume then that $\{\varphi_i\}$ is a dual equivalence for
  $(\mathcal{A},\mathrm{Des})$. If $\sigma(w)_{\imo} = \sigma(w)_i$,
  then the degree $3$ generating function of $[w]_{i}$ is $s_{(3)}$ or
  $s_{(1^3)}$, so $w$ must be in an $i$-equivalence class of its own,
  i.e. $\varphi_i(w) = w$. On the other hand, if $\sigma(w)_{\imo} =
  -\sigma(w)_i$, then the equivalence class of $w$ must have
  generating function $s_{(2,1)}$, so there exists $x \neq w$ such
  that $\varphi_i(w) = x$. Moreover, since $\varphi_i$ is an
  involution, $x$ is unique. This establishes axiom $1$. This also
  shows that, up to interchanging $w$ and $x$, $\sigma(w)_{\imo,i} =
  +-$ and $\sigma(x)_{\imo,i} = -+$, satisfying the first half of
  axiom $2$.

  Assume $x = \varphi_i(w) \neq w$, $\sigma(w)_{\imt} =
  -\sigma(x)_{\imt}$ and, contrary to axiom $3$, $\sigma(w)_{\imt} =
  \sigma(w)_{\imo}$. By axiom $1$ and the first half of axiom $2$, we
  may assume $\sigma(w)_{\imo,i} = +-$ and $\sigma(x)_{\imo,i} =
  -+$. Therefore $\sigma(w)_{\imt,\imo,i} = ++-$ and
  $\sigma(x)_{\imt,\imo,i} = --+$. By restricting to the
  $(\imo,i)$-equivalence class, condition (i) implies that $w$ must
  lie in a class with generating function $s_{(3,1)}$, and $x$ must
  lie in a class with generating function $s_{(2,1,1)}$, contradicting
  the fact that $w,x$ are in the same restricted class with generating
  function a single Schur function. Therefore axiom $3$ must hold.

  Now consider nontrivial connected components of $\Phi_{i-1} \cup
  \Phi_{i}$. Up to conjugating, the restricted generating function is
  either $s_{(3,1)}$ or $s_{(2,2)}$. In the latter case, there are two
  vertices and neither can be a fixed point for $\varphi_{i-1}$ or
  $\varphi_i$ by axiom $1$. Therefore the structure indeed matches the
  right graph of Figure~\ref{fig:lambda4}. In the former case, there
  are three vertices, say $\sigma(w)_{\imt,\imo,i}=++-$,
  $\sigma(x)_{\imt,\imo,i}=+-+$, and $\sigma(u)_{\imt,\imo,i}=-++$. By
  axiom $1$ and the first half of axiom $2$, we must have
  $\varphi_i(w) = x$, $\varphi_{\imo}(u) = x$, with $w$ fixed for
  $\varphi_{\imo}$ and $u$ fixed for $\varphi_i$. Again, the structure
  indeed matches middle graph of Figure~\ref{fig:lambda4}.

  Next we consider nontrivial connected components of $\Phi_{i-1} \cup
  \Phi_{i} \cup \Phi_{i+1}$. Up to conjugating, the restricted
  generating function is either $s_{(4,1)}$, $s_{(3,2)}$ or
  $s_{(3,1,1)}$. In the former case, by axiom $1$, exactly two
  vertices are not fixed points for $\varphi_j$ for each
  $j=\imo,i,\ipo$, and making the forced pairing results in the
  structure of $\G_{(4,1)}$. In the other cases, there are four
  vertices that are not fixed points for $\varphi_j$ for each
  $j=\imo,i,\ipo$, so taking the first half of axiom $2$ into account,
  there are two possible $j$-edge pairings for each
  $j=\imo,i,\ipo$. For generating function $s_{(3,1,1)}$, taking axiom
  $3$ into account uniquely forces each edge pairing so that the
  resulting graph has the structure of $\G_{(3,1,1)}$. For generating
  function $s_{(3,2)}$, given that two color components must appear in
  Figure~\ref{fig:lambda4}, there are two possibilities: the structure
  of $\G_{(3,2)}$ or a vertex $w$ with $\varphi_{\imo}(w) =
  \varphi_i(w) = \varphi_{\ipo}(w)$, in which case the restricted
  equivalence class has only $2$ elements and not the required
  $5$. Therefore axiom $4$ holds.

  Axiom $5$ is precisely the condition (ii) for dual equivalence.

  For the second half of axiom $2$, assume $|i-j|\geq3$. If $w$ is a
  fixed point for $\varphi_i$, then by axiom $5$,
  $\varphi_i(\varphi_j(w)) = \varphi_j(\varphi_i(w)) = \varphi_j(w)$,
  so $\varphi_j(w)$ is a fixed point for $\varphi_i$ as well. In
  particular, if $x = \varphi_i(w) \neq w$ and $\sigma(w)_j \neq
  \sigma(x)_j$ for some $j < \imt$, then by this observation and axiom
  $1$, $\sigma(w)_j \neq \sigma(x)_j$ for every $j < \imt$. Similarly,
  if $x = \varphi_i(w) \neq w$ and $\sigma(w)_j \neq \sigma(x)_j$ for
  some $j > \ipo$, then $\sigma(w)_j \neq \sigma(x)_j$ for every $j >
  \ipo$. Therefore it suffices to show $\sigma(w)_j \neq \sigma(x)_j$
  for $j = \imh$ and $j = \ipt$. This follows by axioms $1$ and $4$
  and the first half of axiom $2$.

  Finally, since axiom 6 follows from axiom 4 when $n \leq 5$, we
  prove axiom 6 by induction and assume $i = n-1$. Since $\G$ has been
  shown to satisfy axioms $1$ through $5$ and the
  $(n-1,n-1)$-restriction satisfies axiom $6$ by induction,
  Theorem~\ref{thm:isomorphic} ensures that the hypotheses of
  Theorem~\ref{thm:cover} and Corollary~\ref{cor:fibers} are
  met. Therefore the generating function of $\G$ is $k s_{\lambda}$
  for some positive integer $k$. When $n=6$, condition (i) ensures
  $k=1$ and the map to $\G_{\lambda}$ is an isomorphism. In
  particular, we may assume the restriction of $\G$ to edges
  $\Phi_{n-4},\ldots,\Phi_{n-1}$ satisfies dual equivalence graph
  axiom $6$.

  Suppose $T,U,V,W \in \mathcal{A}$, and that $\{T,U\}, \{V,W\} \in
  \Phi_{n-1}$ and $U$ and $V$ lie on the same connected component of
  $\Phi_2 \cup \cdots \cup \Phi_{n-2}$. We will show that there exist
  $T^{\prime},W^{\prime}$ lying on the same connected component of
  $\Phi_2 \cup \cdots \cup \Phi_{n-2}$ as $T,W$, respectively, such
  that there is a path from $T^{\prime}$ to $W^{\prime}$ crossing at
  most one $\Phi_{n-1}$ edge; see Figure~\ref{fig:ax6}. This is enough
  to establish axiom 6.

  \begin{figure}[ht]
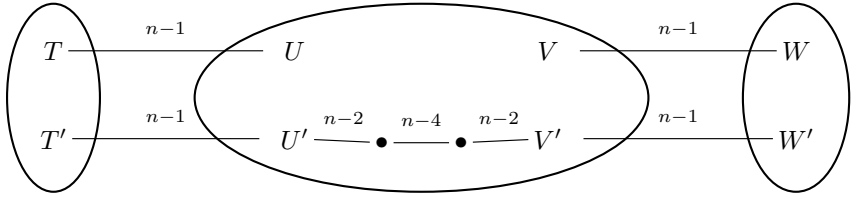

    \begin{displaymath}
      \begin{array}{c@{\hskip 3.5em}c@{\hskip 3.5em}c}
        \ovalnode{L}{ \begin{array}{c}
            \rnode{T}{T} \\ [5ex]
            \rnode{TT}{T^{\prime}}
          \end{array} } &
        \ovalnode{M}{ \begin{array}{c@{\hskip 2.5em}c@{\hskip 2.5em}c@{\hskip 2.5em}c}
            \rnode{U}{U} & & & \rnode{V}{V} \\[5ex]
            \rnode{UU}{U^{\prime}} & \rnode{A}{\bullet} & \rnode{B}{\bullet} & \rnode{VV}{V^{\prime}} 
          \end{array} } &
        \ovalnode{R}{ \begin{array}{c}
            \rnode{W}{W} \\[5ex]
            \rnode{WW}{W^{\prime}} 
          \end{array} }
      \end{array}
      \everypsbox{\scriptstyle}
      \psset{nodesep=2pt,linewidth=.1ex}
      \ncline[nodesepB=8pt] {T}{U}  \naput{n-1}
      \ncline[nodesepB=8pt] {TT}{UU}\naput{n-1}
      \ncline[nodesepA=8pt] {VV}{WW}\naput{n-1}
      \ncline[nodesepA=8pt] {V}{W}  \naput{n-1}
      \ncline {UU}{A} \naput{n-2}
      \ncline {A}{B}  \naput{n-4}
      \ncline {B}{VV} \naput{n-2}
    \end{displaymath}
    \caption{\label{fig:ax6} Establishing axiom 6 from local
      conditions.}
  \end{figure}

  By Theorem~\ref{thm:isomorphic}, every connected dual equivalence
  graph is isomorphic to the graph on tableaux of a given shape. By
  the inductive hypothesis and Theorem~\ref{thm:cover}, we may
  identify $T,U,V,W$ with tableaux of shape $\kappa$, $|\kappa| = n$,
  and, when restricted to entries less than $n$, $T,U,V,W$ have shapes
  $\mu,\lambda,\lambda,\nu$, respectively, with $\mu,\lambda,\nu$
  distinct partitions contained in $\kappa$ by
  Corollary~\ref{cor:fibers}. Then $\kappa\setminus\xi$ must be a
  corner (end of row, top of column) for $\xi =
  \mu,\lambda,\nu$. There are six cases to consider, based on the
  relative positions of $\kappa\setminus\mu$, $\kappa\setminus\nu$,
  and $\kappa\setminus\lambda$ in $\kappa$. We treat one case in full
  detail, noting that the others can be resolved in a completely
  analogous way.

  Assume these cells appear with $\kappa\setminus\mu$ northwest of
  $\kappa\setminus\nu$ northwest of $\kappa\setminus\lambda$. Let
  $T^{\prime}$ be any tableaux of shape $\kappa$ with $n$ in position
  $\kappa\setminus\mu$, $n-1$ in position $\kappa\setminus\lambda$,
  $n-2$ in position $\kappa\setminus\nu$, $n-3$ between $n$ and $n-2$
  in the reading order, $n-4$ between $n-1$ and $n-2$ in the reading
  order, and $n-5$ between $n-3$ and $n-4$ in the reading order. Set
  $U^{\prime} = \varphi_{n-1}(T^{\prime})$. Since the shape of
  $U^{\prime}$ restricted to entries less than $n$ is $\lambda$,
  $U^{\prime}$ must lie on the same connected component of $\Phi_2
  \cup \cdots \cup \Phi_{n-2}$ as $U = \varphi_{n-1}(T)$, as does
  $V^{\prime} = \varphi_{n-2} \varphi_{n-4} \varphi_{n-2}
  (U^{\prime})$. Set $W^{\prime} = \varphi_{n-1}(V^{\prime})$. Since
  the shape of $W^{\prime}$ restricted to entries less than $n$ is
  $\nu$, $W^{\prime}$ must lie on the same connected component of
  $\Phi_2 \cup \cdots \cup \Phi_{n-2}$ as $W =
  \varphi_{n-1}(V)$. Moreover, by unraveling the definitions of
  $U^{\prime}, V^{\prime}$, and $W^{\prime}$, we have $W^{\prime} =
  \varphi_{n-1} \varphi_{n-2} \varphi_{n-4} \varphi_{n-2}
  \varphi_{n-1} (T^{\prime})$. Therefore $T^{\prime}$ and $W^{\prime}$
  lie on the same restricted graph of size $6$, so by induction there
  exists a path from $T^{\prime}$ to $W^{\prime}$ using at most one
  edge in $\Phi_{n-1}$.
\end{proof}

\begin{remark}
  In practice, the characterization of dual equivalence being local
  makes it far better than the axioms for a dual equivalence graph for
  establishing Schur positivity. The equivalence of axiom 6 to a local
  condition was first observed and proved by Roberts
  \cite{Roberts2013}.
\label{rmk:austin}
\end{remark}

\begin{corollary}
  If there exists a dual equivalence for $(\mathcal{A},\mathrm{Des})$,
  then
  \begin{displaymath}
    \sum_{T \in \mathcal{A}} Q_{\mathrm{Des}(T)}(X)
  \end{displaymath}
  is symmetric and Schur positive.
  \label{cor:positivity}
\end{corollary}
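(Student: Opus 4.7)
The plan is to chain together the two main results already established: \textbf{Theorem~\ref{thm:involution_graph}}, which converts a dual equivalence on $(\mathcal{A},\mathrm{Des})$ into a dual equivalence graph, and \textbf{Corollary~\ref{cor:schurpos}}, which asserts Schur positivity of the generating function of any dual equivalence graph of type $(n,n)$. The only work is bookkeeping: verifying that the generating function in the statement of the corollary is literally the generating function of the graph produced by Theorem~\ref{thm:involution_graph}.

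Concretely, given the family $\{\varphi_i\}_{1<i<n}$, I would first form the signed, colored graph $\G = (\mathcal{A},\sigma,\Phi)$ of type $(n,n)$ as in Theorem~\ref{thm:involution_graph}, with
\[
\sigma(T)_i = -1 \iff i \in \mathrm{Des}(T), \qquad
\Phi_i = \{\{T,\varphi_i(T)\} : \varphi_i(T) \neq T\}.
\]
By Theorem~\ref{thm:involution_graph}, $\G$ is a dual equivalence graph. Note that $D(\sigma(T)) = \mathrm{Des}(T)$ by construction, so under Gessel's identification of fundamental quasisymmetric functions indexed by $\pm 1$--sequences with those indexed by subsets of $[n-1]$, we have $Q_{\mathrm{Des}(T)}(X) = Q_{\sigma(T)}(X)$ for every $T \in \mathcal{A}$.

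Now I apply Corollary~\ref{cor:schurpos} to $\G$, taking the constant statistic $\alpha \equiv 0$ (which is trivially constant on connected components). This yields
\[
\sum_{T \in \mathcal{A}} Q_{\mathrm{Des}(T)}(X)
\;=\; \sum_{T \in V(\G)} Q_{\sigma(T)}(X)
\;=\; \sum_{\lambda} \bigl|\{\C \cong \G_{\lambda}\}\bigr|\, s_{\lambda}(X),
\]
where the sum on the right ranges over connected components $\C$ of $\G$. Since the multiplicity $|\{\C \cong \G_{\lambda}\}|$ is a nonnegative integer, the right-hand side is manifestly symmetric and Schur positive, which is the desired conclusion.

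There is no real obstacle here: all of the substantive content (the axiomatic characterization of $\G_\lambda$, the structure theorem, and the translation between involutions and edge sets) has been carried out previously. The only point that might warrant a brief sentence in the write-up is confirming that the condition ``$\alpha$ is constant on connected components'' in Corollary~\ref{cor:schurpos} is vacuously satisfied by the zero statistic, so that the unweighted generating function of $\G$ indeed expands positively in Schur functions.
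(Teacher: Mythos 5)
Your proposal is correct and is exactly the argument the paper intends (it gives no explicit proof, presenting the statement as an immediate consequence of Theorem~\ref{thm:involution_graph} together with Corollary~\ref{cor:schurpos}). The bookkeeping you include --- identifying $Q_{\mathrm{Des}(T)}$ with $Q_{\sigma(T)}$ via $D(\sigma)=\{i:\sigma_i=-1\}$ and taking the trivial statistic in Corollary~\ref{cor:schurpos} --- is all that is needed.
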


\subsection{Formulas from dual equivalence}
\label{sec:jdt-formulas}

Dual equivalence may be used to prove that a function is symmetric and
Schur positive, and it gives a combinatorial interpretation of the
Schur coefficients as the number of equivalence classes or connected
components of a certain type. A classical example of a similar formula
is the standard tableaux version of the Littlewood-Richardson rule.

\begin{theorem}[Littlewood--Richardson Rule]
  Define integers $c_{\mu,\nu}^{\lambda}$ by
  \begin{equation}
    s_{\mu} s_{\nu} = \sum_{\lambda} c_{\mu,\nu}^{\lambda} s_{\lambda}.
  \end{equation}
  Then $c_{\mu,\nu}^{\lambda}$ is the number of standard tableaux of
  shape $\mu$ appended to $\nu$ that rectify by \emph{jeu de taquin}
  to a chosen standard Young tableau of shape $\lambda$.
  \label{thm:LR}
\end{theorem}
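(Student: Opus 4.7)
The plan is to derive this as an application of the dual equivalence machinery developed in the paper. First, I identify $s_{\mu} s_{\nu}$ with a generating function over standard tableaux of a skew shape. Let $\mu * \nu$ denote the skew diagram obtained by placing $\nu$ to the northeast of $\mu$ (with no overlap). Then $s_{\mu} s_{\nu} = s_{\mu * \nu}$, and the skew analogue of Proposition~\ref{prop:quasisym} gives
\begin{displaymath}
  s_{\mu} s_{\nu} \ = \ \sum_{T \in \mathrm{SYT}(\mu * \nu)} Q_{\sigma(T)}(X),
\end{displaymath}
where $\sigma(T)$ is the descent signature of $T$ computed from its content reading word.

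Next, I apply Haiman's elementary dual equivalence involutions $d_i$ to $\mathrm{SYT}(\mu * \nu)$. Since the definition of $d_i$ is purely local on the content reading word, the proof of Proposition~\ref{prop:good-defn} carries over verbatim to the skew setting and shows that $\{d_i\}_{1 < i < n}$ is a dual equivalence for $(\mathrm{SYT}(\mu * \nu), \mathrm{Des})$ in the sense of Definition~\ref{def:strong}. Invoking Theorem~\ref{thm:involution_graph} together with the main classification Theorem~\ref{thm:isomorphic}, each connected component of the associated dual equivalence graph is isomorphic to some $\G_{\lambda}$, so
\begin{displaymath}
  s_{\mu} s_{\nu} \ = \ \sum_{\lambda} N_{\lambda}^{\mu,\nu} \, s_{\lambda},
\end{displaymath}
where $N_{\lambda}^{\mu,\nu}$ counts the dual equivalence classes on $\mathrm{SYT}(\mu * \nu)$ whose graph is isomorphic to $\G_{\lambda}$.

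Finally, I identify $N_{\lambda}^{\mu,\nu}$ with the rectification count. Haiman's theorem states that the elementary dual equivalences commute with jeu de taquin slides, so the rectification of a skew standard tableau is constant on its dual equivalence class. Conversely, by Proposition~\ref{prop:deshape}, two rectifications of straight shape $\lambda$ lying in the same class must coincide. Thus rectification induces a bijection between dual equivalence classes isomorphic to $\G_{\lambda}$ and standard tableaux of shape $\lambda$ that arise as rectifications, so $N_{\lambda}^{\mu,\nu}$ equals the number of $T \in \mathrm{SYT}(\mu * \nu)$ rectifying to any fixed $T_{\lambda} \in \mathrm{SYT}(\lambda)$, which is precisely $c_{\mu,\nu}^{\lambda}$.

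The main obstacle is the appeal to Haiman's compatibility of dual equivalence with jeu de taquin, which is external to the present paper but classical; once invoked, the remainder is a direct application of Theorem~\ref{thm:isomorphic} and Corollary~\ref{cor:positivity}. One could alternatively avoid the bijective identification at the end by observing that the stated rectification count $\tilde c_{\mu,\nu}^{\lambda}$ visibly gives a valid expansion of $s_{\mu} s_{\nu}$ via \eqref{e:classes}, and then concluding $\tilde c_{\mu,\nu}^{\lambda} = c_{\mu,\nu}^{\lambda}$ from linear independence of Schur functions.
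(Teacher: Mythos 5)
Your route is genuinely different from the paper's: the paper obtains the Littlewood--Richardson rule as the $q=1$ specialization of Theorem~\ref{thm:ax1235}, using the involutions $D_i$ on a pair of tableaux, so that condition (i) of Definition~\ref{def:strong} is verified by finite local checks with no appeal to jeu de taquin, and the coefficients are then read off via the canonical isomorphism $\theta$ of Section~\ref{sec:jdt-formulas}. You instead work with Haiman's $d_i$ directly on $\mathrm{SYT}(\mu * \nu)$ and import the classical compatibility of $d_i$ with jeu de taquin. That is a legitimate (and historically the original) line of argument, but two steps as written do not hold up.

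First, the claim that the proof of Proposition~\ref{prop:good-defn} ``carries over verbatim'' to skew shapes is false for axiom $6$ (equivalently, condition (i) of Definition~\ref{def:strong} with $i-h=3$). That part of the proof invokes Proposition~\ref{prop:deshape} to conclude that two components $\C_T,\C_U$ of the $(i,i)$-restriction with the same underlying shape coincide; for skew shapes this fails (already for three disjoint boxes, removing the cell containing the largest entry always leaves the same shape, yet the restricted components are distinct). Axioms $1$--$5$ do transfer, but axiom $6$ in the skew setting has real content and must be proved another way --- essentially by the rectification argument you defer to the end. Second, the injectivity of rectification on a dual equivalence class is not a consequence of Proposition~\ref{prop:deshape}: that proposition says any two straight-shape tableaux of shape $\lambda$ \emph{are} dual equivalent, so ``lying in the same class'' imposes nothing. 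What you need is Haiman's separate lemma that two skew tableaux which are simultaneously dual equivalent and jeu-de-taquin equivalent are equal. Once you grant that lemma together with the commutation of $d_i$ with slides, rectification is a bijection from each dual equivalence class onto $\mathrm{SYT}(\lambda)$; this simultaneously verifies condition (i) in every window and yields the count, at which point the detour through Theorem~\ref{thm:involution_graph} and Theorem~\ref{thm:isomorphic} is redundant.
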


We use dual equivalence to give a simple proof of this rule as a
corollary (at $q=1$) to Theorem~\ref{thm:ax1235}. 

Dual equivalence for $\mathcal{A}$ can be regarded as implicitly
giving a rectification rule via the unique isomorphism, say $\theta$,
from $(\mathcal{A},\sigma,\Phi)$ to $\G_{\lambda}$. That is, say that
$T \in \mathcal{A}$ rectifies to $\theta(T) \in
\mathrm{SYT}(\lambda)$. Then choosing only those tableaux that rectify
to a fixed $T\in\mathrm{SYT}(\lambda)$ is equivalent to choosing a
distinguished equivalence class member and taking all objects that map
to that member under $\theta$.

In this section, we present two good choices for distinguished
elements that avoid the need for explicitly constructing the
isomorphism $\theta$.

For $T\in\mathcal{A}$, let $\alpha(T)$ be the composition of $n$
corresponding to $\mathrm{Des}(T)$. Recall the dominance order on
partitions of $n$, which we extend to compositions of $n$ by
\begin{equation}
  \alpha \geq \beta \ \Leftrightarrow \  
  \alpha_1 + \cdots + \alpha_k \geq \beta_1 + \cdots + \beta_k \
  \forall k.
  \label{e:dominance}
\end{equation}
We can now define the set of distinguished elements.

\begin{definition}
  Let $\{\varphi_i\}$ be a dual equivalence for
  $(\mathcal{A},\mathrm{Des})$. Then $T \in \mathcal{A}$ is called \emph{dominant} if $\alpha(T)
  \geq \alpha(S)$ for every $S$ in the dual equivalence class of $T$.
  \label{def:dominant}
\end{definition}

Since dominance order is a partial order, it is not immediately
obvious that dominant objects exist. Not only do they exist, but each
dual equivalence class contains a unique dominant element, and
$\alpha(T)$ is a partition for $T$ dominant.

\begin{theorem}
  Let $\{\varphi_i\}$ be a dual equivalence for $(\mathcal{A},\mathrm{Des})$
  preserving $\mathrm{stat}$. Then
  \begin{equation}
    f(X;q) = \sum_{T \in \mathrm{Dom}(\mathcal{A})} q^{\mathrm{stat}(T)}
    s_{\alpha(T)}(X) = \sum_{\lambda} \left(
      \sum_{\substack{T \in \mathrm{Dom}(\mathcal{A}) \\ \alpha(T)=\lambda}} q^{\mathrm{stat}(T)}
    \right) s_{\lambda}(X),  
    \label{e:schur_f}
  \end{equation}
  where $\mathrm{Dom}(\mathcal{A})$ is the set of dominant objects of $\mathcal{A}$ with respect
  to $\{\varphi_i\}$.
  \label{thm:schur_expansion}
\end{theorem}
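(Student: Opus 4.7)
The plan is to decompose $\mathcal{A}$ into dual equivalence classes using the structural theorems of Section~\ref{sec:deg}, then identify the dominant element of each class. By Theorem~\ref{thm:involution_graph}, the signed, colored graph $(\mathcal{A},\sigma,\Phi)$ built from $(\mathcal{A},\mathrm{Des},\{\varphi_i\})$ will be a dual equivalence graph, so its connected components coincide with the dual equivalence classes of $\{\varphi_i\}$. By Theorem~\ref{thm:isomorphic}, each component $\mathcal{C}$ will be isomorphic, via some signature-preserving bijection $\theta_{\mathcal{C}}$, to $\G_{\lambda(\mathcal{C})}$ for a unique partition $\lambda(\mathcal{C})$. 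Since each $\varphi_i$ preserves $\mathrm{stat}$, this statistic is constant on $\mathcal{C}$, and combining this with the Schur expansion \eqref{eqn:glamschur} of $\G_{\lambda}$ gives
\begin{displaymath}
  f(X;q) \ = \ \sum_{\mathcal{C}} q^{\mathrm{stat}(\mathcal{C})} s_{\lambda(\mathcal{C})}(X).
\end{displaymath}

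The heart of the argument is to show that each $\mathcal{C}$ contains a unique dominant element $T^{\star}_{\mathcal{C}}$ and that $\alpha(T^{\star}_{\mathcal{C}}) = \lambda(\mathcal{C})$. Since $\theta_{\mathcal{C}}$ preserves signatures, and hence $\alpha$, this reduces to a claim inside $\G_{\lambda}$: the row-superstandard tableau $T_{\lambda}$ of Proposition~\ref{prop:noauto-noniso} is the unique element of $\mathrm{SYT}(\lambda)$ whose descent composition is maximal in the dominance order on compositions, and $\alpha(T_{\lambda})=\lambda$. The equality $\alpha(T_{\lambda})=\lambda$ is a direct calculation from the row-filling construction. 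For maximality, I fix $T\in\mathrm{SYT}(\lambda)$, write $r(k)$ for the row of $k$ in $T$, and let $d_j=\alpha(T)_1+\cdots+\alpha(T)_j$ be the position of the $j$-th descent of $\sigma(T)$. The key observation is that each time the sequence $r(1),r(2),\ldots$ first reaches a new row it must arrive from a strictly smaller row-index and therefore creates a descent of $\sigma(T)$. Since only $j-1$ descents occur among positions $1,\ldots,d_j-1$, the sub-Young-diagram $\mu\subseteq\lambda$ occupied by the entries $\{1,\ldots,d_j\}$ uses at most $j$ rows, giving $d_j=|\mu|\leq\mu_1+\cdots+\mu_j\leq\lambda_1+\cdots+\lambda_j$. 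This establishes $\alpha(T)\leq\lambda$ in dominance, and chasing the equality case forces $\mu=(\lambda_1,\ldots,\lambda_j)$ for every $j$, hence $T=T_{\lambda}$.

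Setting $T^{\star}_{\mathcal{C}}=\theta_{\mathcal{C}}^{-1}(T_{\lambda(\mathcal{C})})$ then identifies the unique dominant element of $\mathcal{C}$, with $\alpha(T^{\star}_{\mathcal{C}})=\lambda(\mathcal{C})$ (automatically a partition), so reindexing the sum over components by their dominant representatives produces \eqref{e:schur_f}. The main obstacle I expect is the dominance inequality $\alpha(T)\leq\lambda$ for $T\in\mathrm{SYT}(\lambda)$, together with its equality case; once this combinatorial bound is in hand, the structural results of Section~\ref{sec:deg} carry the rest through.
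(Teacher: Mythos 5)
Your proof is correct and follows essentially the same route as the paper: decompose via the dual equivalence graph machinery (Theorems~\ref{thm:involution_graph} and~\ref{thm:isomorphic}), note that $\mathrm{stat}$ is constant on components, and reduce to identifying the dominant element of $\mathrm{SYT}(\lambda)$ as the superstandard tableau $T_\lambda$. The only substantive difference is that the paper simply asserts the inequality $\alpha(T)\leq\lambda$ for $T\in\mathrm{SYT}(\lambda)$ (with equality iff $T=T_\lambda$) as known, while you prove it via the observation that every row beyond the first entered by the partial filling $\{1,\ldots,d_j\}$ contributes a distinct descent among positions $1,\ldots,d_j-1$, bounding the number of rows of the subshape $\mu$ by $j$ and hence $d_j\leq\lambda_1+\cdots+\lambda_j$; this is a clean and correct argument that fills a gap the paper leaves implicit.
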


\begin{proof}
  Given $\lambda$, let $T_{\lambda} \in \mathrm{SYT}(\lambda)$ denote the
  \emph{superstandard tableau of shape $\lambda$} obtained by filling
  the first row with $1,2,\ldots,\lambda_1$, the second row with
  $\lambda_1+1,\ldots,\lambda_1+\lambda_2$, and so on. For example,
  see Figure~\ref{fig:dominant}. For any $T \in \mathrm{SYT}(\lambda)$, we
  have $\alpha(T) \leq \lambda$ with equality if and only if
  $T=T_{\lambda}$.  Since the dual equivalence classes on tableaux
  include all tableaux of a given shape, each dual equivalence class
  contains a unique dominant element, and the map alpha gives the
  corresponding Schur function for the class. The result for arbitrary
  $\mathcal{A}$ now follows from Corollary~\ref{cor:positivity} since the descent
  sets must be the same for the elements of a dual equivalence class
  and the set of tableaux of shape $\lambda$ for some partition
  $\lambda$.
\end{proof}

\begin{figure}[ht]
  \begin{center}
    \begin{displaymath}
      \tableau{8 & 9 \\ 5 & 6 & 7 \\ 1 & 2 & 3 & 4} 
      \hspace{6em}
      \tableau{3 & 6 \\ 2 & 5 & 8 \\ 1 & 4 & 7 & 9}
    \end{displaymath}
    \caption{\label{fig:dominant}The superstandard (dominant) and
      substandard (subordinate) tableaux of shape $(4,3,2)$.}
  \end{center}    
\end{figure}

\begin{remark}
  Theorem~\ref{thm:schur_expansion} makes use of the implicit
  bijection between $\mathcal{A}$ and tableaux that exists whenever there is a
  dual equivalence for $\mathcal{A}$. This bijection can be realized by
  identifying each $T\in\mathrm{Dom}(\mathcal{A})$ with the superstandard tableau
  $T_{\alpha(T)}$ and then applying the same sequence of dual
  equivalence involutions to both.
  \label{rmk:implicit_bijection}
\end{remark}

\begin{remark}
  There is another distinguished element that can be chosen from each
  equivalence class which is almost as natural as the dominant
  element. Say that $T \in \mathcal{A}$ is \emph{subordinate} if
  $\alpha(T) \leq \alpha(S)$ for every $S$ in the dual equivalence
  class of $T$. For example, the right tableaux in
  Figure~\ref{fig:dominant} is the subordinate tableaux of shape
  $(4,3,2)$. Then each dual equivalence class contains a unique
  subordinate object. Define a map $\beta$ on subordinate objects by
  sending $T$ to $\alpha([n-1] \setminus
  \mathrm{Des}(T))^{\prime}$. That is, complement the set and
  conjugate the shape. Then in Theorem~\ref{thm:schur_expansion}, the
  set $\mathrm{Dom}$ of dominant objects may be replaced with the set
  $\mathrm{Sub}$ of subordinate objects when $\alpha$ is replaced with
  $\beta$.
  \label{rmk:subordinate}
\end{remark}



%
\section{A graph for LLT polynomials}
%
\label{sec:llt}

\subsection{LLT polynomials}
\label{sec:pre-llt}

The LLT polynomial $\widetilde{G}_{\mu}^{(k)}(x;q)$, originally
defined by Lascoux, Leclerc and Thibon \cite{LLT1997} in 1997, is the
$q$-generating function of $k$-ribbon tableaux of shape $\mu$ weighted
by a statistic called cospin. By the Stanton-White correspondence
\cite{StWh1985}, $k$-ribbon tableaux are in bijection with certain
$k$-tuples of tableaux, from which it follows that LLT polynomials are
$q$-analogs of products of Schur functions. An alternative definition
of $\widetilde{G}_{\boldsymbol{\mu}}(x;q)$ as the $q$-generating
function of tuples of semi-standard tableaux of shapes
$\boldsymbol{\mu} = (\mu^{(0)}, \ldots, \mu^{(k-1)})$ weighted by a
statistic called diagonal inversions is given in \cite{HHLRU2005}.
For a detailed account of the equivalence of these definitions
(actually $q^{a}\widetilde{G}_{\mu}^{(k)}(x;q) =
\widetilde{G}_{\boldsymbol{\mu}}(x;q)$ for a constant $a \geq 0$
depending on $\mu$), see \cite{HHLRU2005,Assaf2007}.

Extending prior notation, for $\boldsymbol{\lambda} = (\lambda^{(0)},
\ldots, \lambda^{(k-1)})$, define
\begin{eqnarray*}
  \mathrm{SSYT}(\boldsymbol{\lambda}) & = & 
  \{\mbox{semi-standard tuples of tableaux of shapes
    $(\lambda^{(0)}, \ldots, \lambda^{(k-1)})$} \} , \\  
  \mathrm{SYT}(\boldsymbol{\lambda}) & = & 
  \{\mbox{standard tuples of tableaux of shapes $(\lambda^{(0)},
    \ldots, \lambda^{(k-1)})$} \}.  
\end{eqnarray*}
As with tableaux, if $\mathbf{T}=(T^{(0)}, \ldots, T^{(k-1)}) \in
\mathrm{SSYT}(\boldsymbol{\lambda})$ has entries $1^{\pi_1},
2^{\pi_2}, \ldots$, then we say that $\mathbf{T}$ has \emph{shape}
$\boldsymbol{\lambda}$ and \emph{weight} $\pi$. Note that a standard
tuple of tableaux has weight $(1^n)$, e.g. see
Figure~\ref{fig:LLT-inv}, and this is not the same as a $k$-tuple of
standard tableaux.

\begin{figure}[ht]
  \begin{displaymath}
      \tableau{\\ 7 & 11 \\ 2 & 6 & 10} \hspace{2\cellsize}
      \tableau{\\ 8 \\ 1 & 12}          \hspace{2\cellsize}
      \makebox[0pt]{\rule[-2\cellsize]{.5pt}{\cellsize}}
      \rule[-2\cellsize]{\cellsize}{.5pt}  
      \hspace{2\cellsize}
      \tableau{9 \\ 3 & 5 \\ \cb & 4}
  \end{displaymath}
  \caption{\label{fig:LLT-inv}A standard $4$-tuple of shape $( \ (3,2),
    \ (2,1), \ \varnothing, \ (2,2,1)/(1) \ )$}
\end{figure}

For a $k$-tuple of (skew) shapes $(\lambda^{(0)}, \ldots,
\lambda^{(k-1)})$, define the \emph{shifted content} of a cell $x$ by
\begin{equation}
  \widetilde{c}(x) \; = \; k \cdot c(x) + i
\label{eqn:shifted-content}
\end{equation}
when $x$ is a cell of $\lambda^{(i)}$, where $c(x)$ is the usual
content of $x$ regarded as a cell of $\lambda^{(i)}$. For $\mathbf{T}
\in \mathrm{SSYT}(\boldsymbol{\lambda})$, let $\mathbf{T}(x)$ denote
the entry of the cell $x$ in $\mathbf{T}$. Define the \emph{set of
  diagonal inversions of $\mathbf{T}$} by
\begin{equation}
  \mathrm{dInv}(\mathbf{T}) = \{ (x,y) \; | \; k > \widetilde{c}(y) -
  \widetilde{c}(x) > 0 \; \mbox{and} \; \mathbf{T}(x) > \mathbf{T}(y) \}.
\label{eqn:Invk-T}
\end{equation}
Then the \emph{diagonal inversion number of $\mathbf{T}$} is given by
\begin{equation}
  \mathrm{dinv}(\mathbf{T}) = \left| \mathrm{dInv}(\mathbf{T}) \right| .
\label{eqn:invk-T}
\end{equation}
It will also be convenient to track the \emph{set of diagonal descents of
  $\mathbf{T}$}, defined by
\begin{equation}
  \mathrm{dDes}(\mathbf{T}) = \{ (x,y) \; | \; \widetilde{c}(y) -
  \widetilde{c}(x) = k \; \mbox{and} \; \mathbf{T}(x) > \mathbf{T}(y) \}.
\label{eqn:Desk-T}
\end{equation}
Of course, these descents are determined by the \emph{shape} of
$\mathbf{T}$ by the increasing rows and columns condition for
tableaux.

For example, suppose $\mathbf{T}$ is the $4$-tuple of tableaux in
Figure~\ref{fig:LLT-inv}. Since $\mathbf{T}$ is standard, let us abuse
notation by representing a cell of $\mathbf{T}$ by the entry it
contains. Then the set of diagonal inversions is
\begin{displaymath}
  \mathrm{dInv}(\mathbf{T}) = \left\{ \begin{array}{c}
      (9,7), \ (9,8), \ ( 7,3), \ (8,3), \ (8,2), \ (3,2), \ ( 3,1), \\
      ( 2,1), \ (11,1), \ (11,5), \ ( 6,4), \ (12,4), \ (12,10)
    \end{array} \right\} ,
\end{displaymath}
and so $\mathrm{dinv}(\mathbf{T}) = 13$. The diagonal descents
describe the shape of $\mathbf{T}$. In this case,
\begin{displaymath}
  \mathrm{dDes}(\mathbf{T}) = \left\{ (7,2), (11,6), (8,1), (9,3), (5,4) \right\}.
\end{displaymath}

By \cite{HHLRU2005}, the LLT polynomial
$\widetilde{G}_{\boldsymbol{\mu}}(x;q)$ is given by
\begin{equation}
  \widetilde{G}_{\boldsymbol{\mu}}(x;q) \; = \; 
  \sum_{\mathbf{T} \in \mathrm{SSYT}(\boldsymbol{\mu})}
  q^{\mathrm{dinv}(\mathbf{T})} x^{\mathbf{T}} ,
\label{eqn:llt}
\end{equation}
where $x^{\mathbf{T}}$ is the monomial $x_1^{\pi_1}
x_{2}^{\pi_2}\cdots$ when $\mathbf{T}$ has weight $\pi$.

Notice that when $q=1$, \eqref{eqn:llt} reduces to a product of Schur
functions:
\begin{equation}
  \sum_{\mathbf{T} \in \mathrm{SSYT}(\boldsymbol{\lambda})} x^{\mathbf{T}} \; = \; \prod_{i=0}^{k-1}
  \sum_{T^{(i)} \in  \mathrm{SSYT}(\lambda^{(i)})} x^{T^{(i)}} \; = \;
  \prod_{i=0}^{k-1} s_{\lambda^{(i)}}(x) .
\label{eqn:schurprod} 
\end{equation}

Define the \emph{content reading word} of a tuple of tableaux to be
the word obtained by reading entries in increasing order of shifted
content and reading diagonals southwest to northeast. For the example
in Figure~\ref{fig:LLT-inv}, the content reading word is
$(9,7,8,3,2,11,1,5,6,12,4,10)$.

For $\mathbf{T}$ a standard tuple of tableaux, define
$\sigma(\mathbf{T})$ analogously to \eqref{eqn:sigma} using the
content reading word. Expressed in terms of quasisymmetric functions,
\eqref{eqn:llt} becomes
\begin{equation}
  \widetilde{G}_{\boldsymbol{\mu}}(x;q) \; = \; \sum_{\mathbf{T} \in \mathrm{SYT}(\boldsymbol{\mu})}
  q^{\mathrm{dinv}(\mathbf{T})} Q_{\sigma(\mathbf{T})}(x).
\label{eqn:llt-quasi}
\end{equation}

Using Fock space representations of quantum affine Lie algebras
constructed by Kashiwara, Miwa and Stern \cite{KMS1995}, Lascoux,
Leclerc and Thibon \cite{LLT1997} proved that
$\widetilde{G}_{\boldsymbol{\mu}}(x;q)$ is a symmetric function.  Thus
we may define the Schur coefficients,
$\widetilde{K}_{\lambda,\boldsymbol{\mu}}(q)$, by
\begin{displaymath}
  \widetilde{G}_{\boldsymbol{\mu}}(x;q) = \sum_{\lambda}
  \widetilde{K}_{\lambda,\boldsymbol{\mu}}(q) s_{\lambda}(x) .
\end{displaymath}
Using Kazhdan-Lusztig theory, Leclerc and Thibon \cite{LeTh2000}
proved that $\widetilde{K}_{\lambda,\boldsymbol{\mu}}(q) \in
\mathbb{N}[q]$ for $\boldsymbol{\mu}$ a tuple of straight shapes.  The
original motivation for dual equivalence graphs is to understand these
Schur coefficients combinatorially and for arbitrary skew shapes.

\subsection{Dual equivalence for tuples of tableaux}
\label{sec:llt-edges}

Haiman's dual equivalence involutions do not always preserve the
increasing row and column conditions for a standard tuple of
shapes. Therefore something more is needed.

\begin{definition}
  Define the \emph{elementary twisted dual equivalence involution}
  $\widetilde{d}_i$, $1<i<n$, on permutations $w$ as follows. If $i$
  lies between $i-1$ and $i+1$ in $w$, then
  $\widetilde{d}_i(w)=w$. Otherwise, $\widetilde{d}_i$ cyclically
  rotates $i-1,i,i+1$ so that $i$ lies on the other side of $i-1$ and
  $i+1$.
  \label{defn:ete}
\end{definition}

Note that Haiman's dual equivalence involutions and the twisted dual
equivalence involutions are the only two possible dual equivalences
for $\mathfrak{S}_3$; see Figure~\ref{fig:twisted}. For the general
case, the number of dual equivalences on $\mathfrak{S}_n$ is
determined by the multiplicities of permutations of $n$ with each
possible inverse descent set.

\begin{figure}[ht]
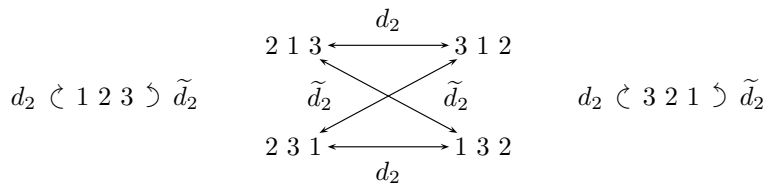

  \begin{center}
    \begin{displaymath}
      d_2 \hspace{1em}
      \raisebox{-.5ex}{\begin{rotate}{90}$\curvearrowright$\end{rotate}}
      \hspace{.5em} 1 \ 2 \ 3 \hspace{.5em} 
      \raisebox{2ex}{\begin{rotate}{-90}$\curvearrowleft$\end{rotate}}
      \hspace{1em} \widetilde{d}_2
      \hspace{2em}\begin{array}{cc}
        \rnode{a}{2 \ 1 \ 3} \hspace{2em} & \hspace{2em}
        \rnode{b}{3 \ 1 \ 2} \\[1.5ex]
        \widetilde{d}_2 & \widetilde{d}_2 \\[1.5ex]
        \rnode{c}{2 \ 3 \ 1} \hspace{2em} & \hspace{2em} \rnode{d}{1 \ 3 \ 2} 
      \end{array} \hspace{2em} 
      d_2 \hspace{1em}
      \raisebox{-.5ex}{\begin{rotate}{90}$\curvearrowright$\end{rotate}}
      \hspace{.5em} 3 \ 2 \ 1 \hspace{.5em} 
      \raisebox{2ex}{\begin{rotate}{-90}$\curvearrowleft$\end{rotate}}
      \hspace{1em} \widetilde{d}_2
      \psset{nodesep=2pt,linewidth=.1ex}
      \ncline{<->}{a}{b} \naput{d_2}
      \ncline{<->}{c}{d} \nbput{d_2}
      \ncline{<->}{a}{d} 
      \ncline{<->}{c}{b} 
    \end{displaymath}
    \caption{\label{fig:twisted}The two dual equivalences for
      $\mathfrak{S}_3$.}
  \end{center}
\end{figure}

Define the distance between two entries $i$ and $j$ of $\mathbf{T} \in
\mathrm{SYT}(\boldsymbol{\mu})$ to be the difference of the shifted
contents of their cells, with the extension
$\mathrm{dist}(a_1,\ldots,a_l) =
\max_{i,j}\{\mathrm{dist}(a_i,a_j)\}$. Note that none of $\triple$ may
occur with the same content. For fixed $k$, combine these two dual
equivalences into an involution $D_i$ on $k$-tuples of tableaux by
\begin{equation}
  D_i(w) \; = \; \left\{
    \begin{array}{ll}
      d_i(w) & \mbox{if} \;\; \mathrm{dist}(\triple) > k \\
      \widetilde{d}_i(w) & \mbox{if} \;\; \mathrm{dist}(\triple) \leq k
    \end{array} \right. 
  \label{eqn:Dk}
\end{equation}
where $w$ is the content reading word of $\mathbf{T}$.

\begin{proposition}
  For $\boldsymbol{\mu}$ a tuple of shapes and $\mathbf{T} \in
  \mathrm{SYT}(\boldsymbol{\mu})$, $\mathbf{T}$ and $D_i(\mathbf{T})$
  have the same diagonal descent set and the same diagonal inversion
  number. In particular, $D_i$ is a well-defined involution on
  $\mathrm{SYT}(\boldsymbol{\mu})$ that preserves the number of
  diagonal inversions.
\label{prop:preserve}
\end{proposition}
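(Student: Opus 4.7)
The plan is to verify, by case analysis on the configuration of the cells $x_{i-1}, x_i, x_{i+1}$ (containing the entries $i-1, i, i+1$) in the content reading word, that $D_i$ preserves (a) the shape of $\mathbf{T}$, equivalently $\mathrm{dDes}(\mathbf{T})$, and (b) the diagonal inversion count $\mathrm{dinv}(\mathbf{T})$. If $i$ lies between $i-1$ and $i+1$ in the reading word then $D_i$ acts as the identity and there is nothing to check; otherwise, by symmetry I may assume $i-1$ is between with shifted contents ordered $t_i < t_{i-1} < t_{i+1}$, so that $\mathrm{dist}(\triple) = t_{i+1} - t_i$. The involution property is automatic once well-definedness is proved, because both $d_i$ and $\widetilde{d}_i$ are involutions and $D_i$ preserves the cell set $\{x_{i-1}, x_i, x_{i+1}\}$ together with its shifted contents, hence also the branch-selecting condition $\mathrm{dist} > k$ versus $\leq k$.

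The key geometric fact is that in any standard tuple, two cells $x, y$ in the same tableau containing consecutive entries $i, i+1$ with $|\widetilde{c}(y) - \widetilde{c}(x)| = k$ must be east neighbors (the south alternative would force $\mathbf{T}(x) > \mathbf{T}(y)$, reversing the required inequality), because a non-adjacent configuration would admit a path of at least three strict increases through intermediate cells of the (possibly skew) shape, forcing $|\mathbf{T}(y) - \mathbf{T}(x)| \geq 3$. In the case $\mathrm{dist} > k$, $D_i = d_i$ swaps the entries $i$ and $i+1$ at $x_i, x_{i+1}$; the lemma ensures these cells share no row or column, and any would-be conflicting neighbor is ruled out since it would have to contain an entry from $\{i-1, i, i+1\}$ and be adjacent to $x_i$ or $x_{i+1}$. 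In the case $\mathrm{dist} \leq k$, $D_i = \widetilde{d}_i$ places $i-1, i+1, i$ at $x_i, x_{i-1}, x_{i+1}$ respectively. When $\mathrm{dist} < k$, pairwise gaps in $(0, k)$ give distinct residues modulo $k$, so the three cells lie in three distinct tableaux and no row or column conflict is possible; when $\mathrm{dist} = k$, the cells $x_i$ and $x_{i+1}$ share a tableau as east neighbors while $x_{i-1}$ lies in a different tableau (its shifted content has nonzero residue modulo $k$ relative to $x_i$), and the rotation leaves $i-1, i$ strictly increasing east across the shared pair.

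For $\mathrm{dinv}$ invariance, any cell $z$ with $\mathbf{T}(z) = m \notin \{i-1, i, i+1\}$ satisfies $m < i-1$ or $m > i+1$, so the inversion status of a pair between $z$ and $x_j$ depends only on the unchanged content gap and the sign of $m - i$, not on which entry of $\{i-1, i, i+1\}$ currently occupies $x_j$. The three pairs internal to $\{x_{i-1}, x_i, x_{i+1}\}$ yield to direct inspection: in the $d_i$ case the pair $(x_i, x_{i+1})$ has gap exceeding $k$ and never contributes, while the other two pairs retain their inversion status before and after; in the $\widetilde{d}_i$ case, the unique internal inversion $(x_i, x_{i-1})$ before is replaced by the unique internal inversion $(x_{i-1}, x_{i+1})$ after, and the remaining pair $(x_i, x_{i+1})$ contributes zero in both states (either from a non-inverting comparison when $\mathrm{dist} < k$ or by failing the content condition when $\mathrm{dist} = k$). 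The main obstacle is the $\mathrm{dist} = k$ subcase of $\widetilde{d}_i$, where two of the three cells are east-neighbors within one tableau and the neighbor conditions at their shared boundary demand careful verification.
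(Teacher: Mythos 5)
Your proposal is correct, and its core --- the observation that cells outside $\{x_{i-1},x_i,x_{i+1}\}$ compare identically with all three of $i-1,i,i+1$, together with a direct check of the three internal pairs (in the twisted case the unique internal inversion migrates from $(x_i,x_{i-1})$ to $(x_{i-1},x_{i+1})$ while the outer pair never contributes; in the untwisted case the two inner pairs keep their comparison signs and the outer pair is out of range) --- is exactly the paper's argument for $\mathrm{dinv}$. Where you diverge is in establishing well-definedness: the paper observes that the same pair-by-pair analysis shows $\mathrm{dDes}$ is preserved, and that $\mathrm{dDes}$ already encodes the row/column constraints of the shape, whereas you verify the tableau conditions geometrically, via the lemma that consecutive entries at shifted-content distance exactly $k$ in the same tableau must occupy east-adjacent cells, plus the residue-mod-$k$ argument placing the three cells in distinct tableaux when $\mathrm{dist} < k$. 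Your route costs an extra lemma (whose path argument is sound for skew diagrams) but makes explicit what the paper leaves implicit; the paper's route is shorter because one computation handles $\mathrm{dDes}$ and $\mathrm{dinv}$ simultaneously. One small caution: in the $d_i$ case the phrase ``any would-be conflicting neighbor is ruled out'' overstates things, since $x_{i-1}$ genuinely can be adjacent to $x_i$ or to $x_{i+1}$; the point is not that no such neighbor exists but that its entry $i-1$ is smaller than both $i$ and $i+1$, so every comparison involving $x_{i-1}$ keeps its sign after the swap.
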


\begin{proof}
  If $i$ lies between $\imo$ and $\ipo$ in $w(\mathbf{T})$, then the
  assertion is trivial. Assume then that $i$ does not lie between
  $\imo$ and $\ipo$ in $w(\mathbf{T})$. If $\mathrm{dist}(\triple) >
  k$ in $\mathbf{T}$, then the relative order of the middle letter
  with the two outer letters remains unchanged, and the outer letters
  do not form a potential descent or a potential inversion. Therefore
  $\mathrm{dDes}(\mathbf{T}) = \mathrm{dDes}(d_i(\mathbf{T}))$ and
  $\mathrm{dInv}(\mathbf{T}) = \mathrm{dInv}(d_i(\mathbf{T}))$.

  If $\mathrm{dist}(\triple) \leq k$ in $\mathbf{T}$, then the
  relative order of the outer letters in $\mathbf{T}$ is the same as
  the relative order of the outer letters in
  $\widetilde{d}_i(\mathbf{T})$. If these positions form a potential
  descent, which happens if and only if $\mathrm{dist}(\triple) = k$,
  the descent or lack thereof between the outer two letters is
  preserved. Therefore $\mathrm{dDes}(\mathbf{T}) =
  \mathrm{dDes}(\widetilde{d}_i(\mathbf{T}))$. In the case
  $\mathrm{dist}(\triple) < k$, these outer letters form a potential
  inversion, which similarly is preserved. The middle letter toggles
  under $\widetilde{d}_i$ between the smallest, $i-1$, and the largest
  $i+1$. Therefore exactly one of the potential inversions between
  involving the middle letter is an inversion, and this holds for
  $\mathbf{T}$ as well as for $\widetilde{d}_i(\mathbf{T})$. Therefore
  $\mathrm{dinv}(\mathbf{T}) =
  \mathrm{dinv}(\widetilde{d}_i(\mathbf{T}))$, though
  $\mathrm{dInv}(\mathbf{T}) \neq
  \mathrm{dInv}(\widetilde{d}_i(\mathbf{T}))$.

  Since $D_i$ preserves the diagonal descent set, it is a well-defined
  involution on $\mathrm{SYT}(\boldsymbol{\mu})$. 
\end{proof}

Let $\G_{\boldsymbol{\mu}}$ be the signed, colored graph on
$\mathrm{SYT}(\boldsymbol{\mu})$ determined by the involutions
$D_i$. An example of $\G_{\boldsymbol{\mu}}$ is given in
Figure~\ref{fig:LLT5}.

\begin{figure}[ht]
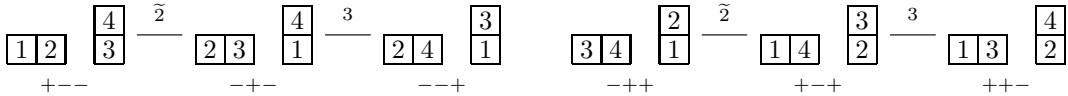

  \begin{displaymath}
    \begin{array}{\cs{4} \cs{4} \cs{4} \cs{4} \cs{4} c}
      \stab{a}{ & & & 4 \\ 1 & 2 & & 3}{+--} &
      \stab{b}{ & & & 4 \\ 2 & 3 & & 1}{-+-} &
      \stab{c}{ & & & 3 \\ 2 & 4 & & 1}{--+} &
      \stab{d}{ & & & 2 \\ 3 & 4 & & 1}{-++} &
      \stab{e}{ & & & 3 \\ 1 & 4 & & 2}{+-+} &
      \stab{f}{ & & & 4 \\ 1 & 3 & & 2}{++-} 
    \end{array}
    \psset{nodesep=5pt,linewidth=.1ex}
    \everypsbox{\scriptstyle}
    \ncline            {a}{b} \naput{\widetilde{2}}
    \ncline            {b}{c} \naput{3}
    \ncline            {d}{e} \naput{\widetilde{2}}
    \ncline            {e}{f} \naput{3}
  \end{displaymath}    
  \caption{\label{fig:LLT5}The graph on domino tableaux of shape $(
    \ (2), \ (1,1) \ )$.}
\end{figure}

Since the graph in Figure~\ref{fig:LLT5} is a dual equivalence graph,
we have
\begin{displaymath}
  \widetilde{G}_{(2),(1,1)}(x;q) = q s_{3,1}(x) + q^2 s_{2,1,1}(x).
\end{displaymath}
In general, $\G_{\boldsymbol{\mu}}$ does not satisfy dual equivalence
axioms $4$ or $6$. Instead of having restricted equivalence classes be
single Schur functions, the restricted equivalence classes for $D_i$
are conjecturally Schur positive.

\begin{definition}
  Let $\mathcal{A}$ be a finite set, and let $\mathrm{Des}$ be a
  descent set map on $\mathcal{A}$ such that $\mathrm{Des}(T)
  \subseteq [n-1]$ for all $T \in \mathcal{A}$.  A \emph{weak dual
    equivalence for $(\mathcal{A},\mathrm{Des})$} is a family of
  involutions $\{\varphi_i\}_{1<i<n}$ on $\mathcal{A}$ such that

  \renewcommand{\theenumi}{\roman{enumi}}
  \begin{enumerate}
  \item For all $|i-j| \leq 3$ and all $T \in \mathcal{A}$,
    \[ \sum_{U \in [T]_{(j,i)}} Q_{\mathrm{Des}_{(j,i)}(U)}(X) \] 
    is Schur positive, and is a single Schur function if $i=j$.

  \item For all $|i-j| \geq 3$ and all $T \in\mathcal{A}$, we have
    \begin{displaymath}
      \varphi_{j} \varphi_{i}(T) = \varphi_{i} \varphi_{j}(T).
    \end{displaymath}

  \end{enumerate}
\label{defn:LSP}
\end{definition}

We refer to condition (i) of Definition~\ref{defn:LSP} as \emph{local
  Schur positivity}.

\begin{theorem}
  For $\boldsymbol{\mu}$ a tuple of shapes, the involutions $\{D_i\}$
  give a weak dual equivalence for $\mathrm{SYT}(\boldsymbol{\mu})$,
  and when $\boldsymbol{\mu}$ consists of at most two shapes, this is
  a strong dual equivalence. 
\label{thm:ax1235}
\end{theorem}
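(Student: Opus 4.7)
The plan is to verify the two conditions of weak dual equivalence from Definition~\ref{defn:LSP} for an arbitrary tuple $\boldsymbol{\mu}$, and then to strengthen Schur positivity to a single Schur function in the at-most-two-shape case so as to match Definition~\ref{def:strong}. I would organize the argument into the commutation condition (ii), the trivial single-color restricted class, and the genuinely substantive multi-color local analysis.

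Condition (ii) is immediate: both $d_h$ and $\widetilde{d}_h$ rearrange only the letters $\triple$ in the content reading word, so when $|i-j|\geq 3$ the involutions $D_i$ and $D_j$ act on disjoint triples of letters and therefore commute. For the single-color case $i=j$ of condition (i), Proposition~\ref{prop:preserve} gives $D_i$ as an involution; its fixed points have $\sigma_{i-1}=\sigma_i$ and contribute $s_{(3)}$ or $s_{(1,1,1)}$ individually, while each nontrivial orbit has signatures $+-$ and $-+$ summing to $s_{(2,1)}$. Hence every single-color restricted class is a single Schur function.

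The heart of the proof is condition (i) for $1\leq|i-j|\leq 3$. A restricted class generated by $D_h,\ldots,D_i$ involves only the $i-h+3$ letters $h-1,h,\ldots,i+1$, so its structure depends only on the \emph{distance pattern} of the cells containing those letters: by \eqref{eqn:Dk}, each consecutive triple independently selects $d$ or $\widetilde{d}$ according to whether its spread exceeds $k$. My plan is to enumerate these finitely many distance patterns. When every relevant consecutive triple has spread greater than $k$, we have $D_h=d_h$ throughout the window, the restricted class coincides with an ordinary dual equivalence class, and Proposition~\ref{prop:good-defn} identifies it with a component of some $\G_\lambda$, hence with a single Schur function. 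For each remaining ``mixed'' pattern I would directly compute the orbit on the $\leq 6$ letters involved and recognize the resulting quasisymmetric generating function as a nonnegative integer combination of Schur functions, establishing condition (i) of weak dual equivalence.

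For the second assertion, $k\leq 2$ severely restricts the geometry of cells within shifted-content distance $k$, since diagonals of two shapes can interlace in only a handful of ways. A finite case check then shows that every mixed pattern also produces a single Schur function matching exactly one component of some $\G_\lambda$, and Theorem~\ref{thm:involution_graph} upgrades weak to strong dual equivalence. I expect the main obstacle to be the four-color case $|i-j|=3$, where the axiom~$6$-style bookkeeping is the most intricate, and where for general $k$ the relaxation from single Schur function to Schur positive becomes essential.
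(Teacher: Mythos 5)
Your proposal follows essentially the same route as the paper: condition (ii) and the single-color case of (i) are handled identically, and the multi-color cases are reduced to a finite check over the possible configurations of shifted-content distances among the (fixed) set of cells occupied by the relevant letters --- the paper parametrizes these by ``attacking vectors'' on reading-word positions, of which there are Catalan-many, yielding $14$ (resp.\ $42$) graph structures on permutations of $5$ (resp.\ $6$) that are verified by hand or computer, with a separate uniform argument for the two-color case. One caution in executing your plan: the choice between $d_j$ and $\widetilde{d}_j$ is \emph{not} made once and for all per triple of values across the whole class, since $\mathrm{dist}(j\!-\!1,j,j\!+\!1)$ changes as the letters move among the cells under the other involutions; the correct class invariant is the attacking relation on the fixed cells (equivalently, reading-word positions), and it is this, not a per-triple selection, that makes your finite enumeration well-defined.
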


\begin{proof}
  Condition (i) for $i=j$ follows immediately from the fact that both
  $d_i$ and $\widetilde{d}_i$ are dual equivalences for
  $\mathfrak{S}_3$. Condition (ii) follows from the fact that if
  $|i-j| \geq 3$, then $\{\imo,i,\ipo\}$ and $\{j-1,j,j+1\}$ are
  disjoint.

  For $|i-j|=1$, consider first a component of $E_{2} \cup E_{3}$
  containing a vertex with signature $\sigma_{1,2,3} = -++$. The only
  possible permutations with this descent pattern are the three
  depicted on the left side of Figure~\ref{fig:finite}. Applying $d_2$
  and $\widetilde{d}_2$ to each of these gives one of the top four
  permutations in the middle of Figure~\ref{fig:finite}. Applying
  $d_3$ and $\widetilde{d}_3$ to each of these gives either one of the
  three permutations on the right of Figure~\ref{fig:finite}, or the
  bottom permutation in the middle of
  Figure~\ref{fig:finite}. Finally, applying $d_2$ and
  $\widetilde{d}_2$ to $2143$ gives either $1342$ or $3142$, both of
  which appear in the middle of Figure~\ref{fig:finite}. Thus
  traversing the graph in Figure~\ref{fig:finite} by starting on the
  left and alternating between $2$ and $3$ edges must eventually end
  on the right. Taking signatures into account, the possible
  generating functions are of the form $s_{(3,1)} + ms_{(2,2)}$ for
  some $m \in \mathbb{N}$ (in fact, a more detailed analysis shows
  $m=0$ or $1$). In particular, a component containing a vertex with
  signature $\sigma_{1,2,3} = -++$ is Schur positive. The same figure
  applies when working with a vertex with signature $\sigma_{1,2,3} =
  ++-$. Reversing the permutations and multiplying the signatures
  componentwise by $-1$ proves the result for components with a vertex
  with signature $\sigma_{1,2,3} = +--$ or $--+$. A component with a
  vertex with signature $\sigma_{1,2,3} = +++$ or $---$ is a single
  vertex and has generating function $s_{(4)}$ or $s_{(1,1,1,1)}$,
  respectively. The only remaining case is an alternating loop of
  vertices with signatures $\sigma_{1,2,3} = +-+$ and $-+-$. As
  before, the top four permutations in the middle of
  Figure~\ref{fig:finite} must connect by a $2$-edge and a $3$-edge to
  $2143$, thus the component has two vertices and generating function
  $s_{(2,2)}$. Similarly, the reverse of the top four permutations in
  the middle of Figure~\ref{fig:finite} must connect by a $2$-edge and
  a $3$-edge to $3412$, thus the component has two vertices and
  generating function $s_{(2,2)}$. This covers all cases, so local
  Schur positivity holds. Moreover, the case when $\boldsymbol{\mu}$
  has two shapes can be seen to be a single Schur function directly
  from Figure~\ref{fig:2classes}.

  \begin{figure}[ht]
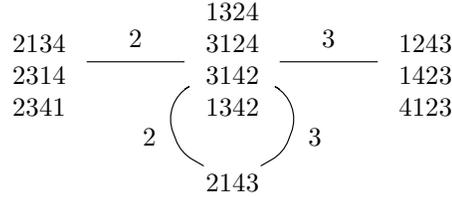

  \begin{displaymath}
    \begin{array}{\cs{10}\cs{10}c}
      \rnode{L}{%
      \begin{array}{c}
        \\
        2134 \\
        2314 \\
        2341
      \end{array}} &
      \rnode{M}{%
      \begin{array}{c}
        1324 \\
        3124 \\
        3142 \\
        1342
      \end{array}} &
      \rnode{R}{%
      \begin{array}{c}
        \\
        1243 \\
        1423 \\
        4123 
      \end{array}} \\[8ex]
      & \rnode{B}{2143} &
    \end{array}
    \psset{nodesep=3pt,linewidth=.1ex}
    \ncline {L}{M} \naput{2}
    \ncline {M}{R} \naput{3}
    \ncdiag[angleA=330,angleB=30,linearc=.5] {M}{B} \naput{3}
    \ncdiag[angleA=150,angleB=210,linearc=.5] {B}{M} \naput{2}
  \end{displaymath}
  \caption{\label{fig:finite}Possible vertices and edges for graph
    using $d_i$ or $\widetilde{d}_i$ for edges colored $i=2,3$.}
  \end{figure}

  For $|i-j|=2,3$, we enumerate all cases and check. Notice that one
  may regard the shape $\boldsymbol{\mu}$ as specifying attacking
  positions in a permutation. That is, for a permutation $w$, say that
  $w_p$ \emph{attacks} $w_q$ if $p<q$ and the difference in shifted
  contents between $p$ and $q$ is at most $k$. Therefore the structure
  of $\G_{\boldsymbol{\mu}}$ is given by the graph on permutations
  where the edges are given by $\widetilde{d}_i$ if $i$ attacks the
  rightmost of $i\pm 1$ or if the leftmost of $i \pm 1$ attacks $i$,
  and by $d_i$ otherwise. Since attacking positions are determined by
  distance, if $w_p$ attacks $w_r$ with $p<q<r$, then $w_p$ attacks
  $w_q$ as well. Therefore the graph on permutations of size $n$ is
  determined by $(a_1,\ldots,a_{n-1})$, where $a_j$ is the rightmost
  position that $w_j$ attacks. Since none of $i-1,i,i+1$ can have the
  same shifted content, we may assume that each position attacks its
  right neighbor, and so $j+1 \leq a_j \leq n$. Moreover, if $w_p$
  attacks $w_r$ with $p<q<r$, then $w_q$ attacks $w_r$ as
  well. Therefore $a_p \leq a_{p+1}$. Hence the number of attacking
  vectors to consider for permutations of length $n$ is the $n-1$st
  Catalan number. In particular, there are $14$ graph structures on
  permutations of $5$ and $42$ on permutations of $6$. These cases can
  be checked by hand or by computer.
\end{proof}

An alternative and completely elementary proof of
Theorem~\ref{thm:ax1235} for $\boldsymbol{\mu}$ consisting of at most
two shapes follows immediately from \cite{A-mahonian08}[Theorem 5.3].

\begin{figure}[ht]
    \begin{displaymath}
      \begin{array}{ccc}
        \{ 2314 \stackrel{\widetilde{d}_2}{\longleftrightarrow} 3124
        \stackrel{d_3}{\longleftrightarrow} 4123 \} &
        \{ 2143 \begin{array}{c}
          \stackrel{d_2}{\longleftrightarrow} \\[-1ex]
          \stackrel{\displaystyle\longleftrightarrow}{_{d_3}}
        \end{array} 3142 \} &
        \{ 1432 \stackrel{d_2}{\longleftrightarrow} 2431
        \stackrel{\widetilde{d}_3}{\longleftrightarrow} 3241 \} \\
        \{ 2341 \stackrel{d_2}{\longleftrightarrow} 1342 
        \stackrel{\widetilde{d}_3}{\longleftrightarrow} 1423 \} & &
        \{ 4312 \stackrel{\widetilde{d}_2}{\longleftrightarrow} 4231
        \stackrel{\widetilde{d}_3}{\longleftrightarrow} 3421 \} \\
        \{ 2134 \stackrel{\widetilde{d}_2}{\longleftrightarrow} 1324
        \stackrel{\widetilde{d}_3}{\longleftrightarrow} 1243 \} &
        \{ 2413 \begin{array}{c}
          \stackrel{d_2}{\longleftrightarrow} \\[-1ex]
          \stackrel{\displaystyle\longleftrightarrow}{_{d_3}}
        \end{array} 3412 \} &
        \{ 4132 \stackrel{\widetilde{d}_2}{\longleftrightarrow} 4213
        \stackrel{d_3}{\longleftrightarrow} 3214 \} 
      \end{array} 
    \end{displaymath}
    \caption{\label{fig:2classes}The nontrivial dual equivalence
      graphs on $\mathfrak{S}_4$ using $\widetilde{d}_i$ only when
      $\imo,i,\ipo$ are adjacent.}
\end{figure}

Theorems~\ref{thm:involution_graph} and \ref{thm:ax1235} together with
Proposition~\ref{prop:preserve} prove the following, which can also be
found in \cite{A-mahonian08}[Corollary 5.4].

\begin{corollary}
  For $\boldsymbol{\mu} = (\mu^{(0)},\mu^{(1)})$, the LLT polynomial
  $\widetilde{G}_{\boldsymbol{\mu}}(x;q)$ is Schur positive.
\end{corollary}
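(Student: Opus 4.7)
The plan is to chain together the machinery developed in the paper, which has all been deliberately constructed to make this corollary immediate. First, I would invoke Theorem~\ref{thm:ax1235} in the two-shape case $\boldsymbol{\mu} = (\mu^{(0)}, \mu^{(1)})$ to conclude that $\{D_i\}$ is a (strong) dual equivalence for $(\mathrm{SYT}(\boldsymbol{\mu}), \mathrm{Des})$, where the descent set of $\mathbf{T}$ is the one encoded by the signature $\sigma(\mathbf{T})$ of its content reading word via \eqref{eqn:sigma}.

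Next, I would apply Theorem~\ref{thm:involution_graph} to transfer this family of involutions into a genuine dual equivalence graph $\G_{\boldsymbol{\mu}} = (\mathrm{SYT}(\boldsymbol{\mu}), \sigma, \Phi)$ of type $(n,n)$, where $n = |\mu^{(0)}| + |\mu^{(1)}|$. The key additional ingredient, already supplied by Proposition~\ref{prop:preserve}, is that $\mathrm{dinv}$ is preserved by each $D_i$ and hence is constant on every connected component of $\G_{\boldsymbol{\mu}}$.

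With the graph structure in hand and $\mathrm{dinv}$ playing the role of the extra statistic $\alpha$, I would invoke Corollary~\ref{cor:schurpos}: this yields
\[
\sum_{\mathbf{T} \in V(\G_{\boldsymbol{\mu}})} q^{\mathrm{dinv}(\mathbf{T})} Q_{\sigma(\mathbf{T})}(X)
\; = \; \sum_{\lambda} \left( \sum_{\C \cong \G_{\lambda}} q^{\mathrm{dinv}(\C)} \right) s_{\lambda}(X),
\]
and by \eqref{eqn:llt-quasi} the left-hand side is precisely $\widetilde{G}_{\boldsymbol{\mu}}(x;q)$. Since each coefficient of $s_{\lambda}$ is a sum of nonnegative monomials in $q$, this establishes Schur positivity and simultaneously provides the combinatorial formula $\widetilde{K}_{\lambda, \boldsymbol{\mu}}(q) = \sum_{\C \cong \G_{\lambda}} q^{\mathrm{dinv}(\C)}$, where the sum is over connected components of $\G_{\boldsymbol{\mu}}$ isomorphic to the standard dual equivalence graph $\G_{\lambda}$.

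There is no significant obstacle here, since the heavy lifting was carried out in Theorem~\ref{thm:ax1235} and Proposition~\ref{prop:preserve}; the corollary is really a matter of bookkeeping. The only care required is to note that the signature function used in Section~\ref{sec:llt} (defined via the content reading word of a tuple) is literally the one used in Section~\ref{sec:deg-general} and Definition~\ref{defn:scg}, so that the hypotheses of Theorem~\ref{thm:involution_graph} and Corollary~\ref{cor:schurpos} are genuinely met without any further reconciliation of descent conventions.
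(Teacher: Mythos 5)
Your proposal is correct and follows exactly the paper's own route: the paper derives this corollary by combining Theorem~\ref{thm:ax1235} (strong dual equivalence for at most two shapes), Theorem~\ref{thm:involution_graph} (involutions versus graphs), and Proposition~\ref{prop:preserve} (preservation of $\mathrm{dinv}$), then reading off Schur positivity via Corollary~\ref{cor:schurpos} and \eqref{eqn:llt-quasi}. Your added remark about reconciling the descent/signature conventions is a reasonable bookkeeping check but does not constitute a different argument.
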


In 1995, Carr\'{e} and Leclerc \cite{CaLe1995} gave a combinatorial
interpretation of $\widetilde{K}_{\lambda,\boldsymbol{\mu}}(q)$ when
$\boldsymbol{\mu}$ has two shapes in their study of $2$-ribbon
tableaux, though a complete proof of their result wasn't found until
2005 by van Leeuwen \cite{vanLeeuwen2005} using the theory of crystal
graphs. That proof is quite long and involved, as compared to the one
page proof above using dual equivalence.

In general, $D_i$ is not a dual equivalence. For instance, if $w$ has
the pattern $2431$ with $\mathrm{dist}(1,2,3) \leq k$ (which forces
$k\geq 3$), then $D_{2}(w)$ contains the pattern $3412$. However,
$D_{3}(w)$ contains the pattern $3241$, which is not the
same. Therefore the restricted generating function is not a single
Schur function (though it is Schur positive). 

\begin{conjecture}
  For $\boldsymbol{\mu}$ a tuple of shapes, each equivalence class
  under $D_i$ is symmetric and Schur positive.
\label{conj:Dgraph}
\end{conjecture}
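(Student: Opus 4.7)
My plan attacks Conjecture~\ref{conj:Dgraph} by adapting the structural machinery of Section~\ref{sec:deg-proof} to the weak setting, proceeding by induction on $n$. The base cases $n \leq 5$ are immediate from condition (i) of Theorem~\ref{thm:ax1235}, which provides local Schur positivity for any five-color restriction. For the inductive step, I would first verify that $\G_{\boldsymbol{\mu}}$ satisfies dual equivalence axioms $1$, $2$, and $5$ in full generality; axiom $5$ is exactly the commutation in Definition~\ref{defn:LSP}(ii), while axioms $1$ and $2$ follow from inspection of $d_i$ and $\widetilde{d}_i$ together with Proposition~\ref{prop:preserve}.

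The central step is to prove a \emph{weak cover} theorem: for a connected signed, colored graph $\G$ satisfying axioms $1$, $2$, $5$ and local Schur positivity, there exists a signature-preserving morphism $\phi$ from $\G$ to a disjoint union $\bigsqcup_k \G_{\lambda_k}$ of standard dual equivalence graphs whose fiber size over any given vertex of the target is constant. Granting this, the generating function of $\G$ equals $\sum_k c_k s_{\lambda_k}(X)$, where $c_k$ is the common fiber size over $\G_{\lambda_k}$, which gives Schur positivity. The construction would mimic Theorem~\ref{thm:cover}: restrict $\G$ to $D_2, \ldots, D_{n-2}$, invoke the inductive hypothesis on each restricted component to identify it with a union of standard graphs, and then glue along $D_{n-1}$-edges. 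Where the strong axioms $3$, $4$, $6$ previously pinned down a unique augmenting position in Lemma~\ref{lem:extend-signs} and forced injectivity in Corollary~\ref{cor:fibers}, I would instead use the four-color local Schur positivity on the colors $n-4, n-3, n-2, n-1$ to supply Schur positive gluings that replace the rigid matching.

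The main obstacle is that the uniqueness arguments of Lemma~\ref{lem:extend-signs} break down: the monotonicity of $\sigma_n$ across $(n-1)$-edges depended on axiom $3$, which in the weak setting can fail whenever the attacking relation in $\boldsymbol{\mu}$ forces a $\widetilde{d}_i$ edge in place of a $d_i$ edge. Consequently the ``augmenting cell'' for $\npo$ need not be unique, and multiple compatible extensions may coexist, producing precisely the higher Schur multiplicities that Conjecture~\ref{conj:Dgraph} predicts. To handle this, I would introduce an auxiliary statistic on connected components tracking the attacking type of the local configuration, and then argue by a refined induction that the multiplicities combine to yield a positive sum of Schur functions.

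An alternative and possibly cleaner route is via Theorem~\ref{thm:schur_expansion}: show directly that each $D_i$-equivalence class contains a canonical multiset of dominant tableaux $T$ with $\alpha(T)$ a partition, from which Schur positivity follows by the formula
\[ f_{\C}(X;q) \ = \ \sum_{T \in \mathrm{Dom}(\C)} q^{\mathrm{dinv}(T)} s_{\alpha(T)}(X). \]
Verifying dominance is itself the crux of the problem: without axioms $3$, $4$, $6$ the uniqueness of the dominant element in each class is not guaranteed, but its \emph{existence with the right multiplicity} is precisely what the conjecture amounts to. I expect the hardest step to be constructing, from a dominant $T$, an explicit bijection between the rest of $\C$ and the non-dominant tableaux of shape $\alpha(T)$, by iterating $D_i$ in a prescribed order analogous to the implicit bijection of Remark~\ref{rmk:implicit_bijection}; the obstruction is that twisted moves $\widetilde{d}_i$ do not commute with jeu de taquin, so one cannot import Haiman's uniqueness theorem directly, and a genuinely new combinatorial argument specific to the LLT attacking structure will be required.
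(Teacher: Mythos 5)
The statement you are trying to prove is stated in the paper as a \emph{conjecture}: the author offers no proof, only the remark that it has been verified computationally for $n \leq 12$ (together with a proof in the special case where every shape in $\boldsymbol{\mu}$ is a single box, Theorem~\ref{thm:k=n}, and in the case of at most two shapes via Theorem~\ref{thm:ax1235}). So there is no argument in the paper to compare yours against, and your text has to stand on its own as a proof. It does not: it is a research plan whose central step --- the ``weak cover theorem'' --- is precisely the open content of the conjecture. The passage from local Schur positivity (condition (i) of Definition~\ref{defn:LSP}) to global Schur positivity is exactly what axioms $3$, $4$, and $6$ were engineered to deliver in the strong setting, through Lemma~\ref{lem:extend-signs}, Theorem~\ref{thm:cover}, and Corollary~\ref{cor:fibers}, and each of these fails here for the reasons you yourself list. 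Concretely, the gluing in Theorem~\ref{thm:cover} requires every connected component of the $(n,n)$-restriction to be isomorphic to a \emph{single} $\G_{\mu}$, so that Lemma~\ref{lem:extend-signs} can locate a unique augmenting cell for $\npo$ from the monotonicity of $\sigma_n$; in the weak setting a restricted component may (inductively) have generating function a sum of several Schur functions, so there is no canonical standard graph to map it to, no well-defined shape to augment, and no target for the morphism. Your proposed fix --- ``an auxiliary statistic on connected components tracking the attacking type'' combined with ``Schur positive gluings'' --- is never defined, and defining it so that the multiplicities assemble into a nonnegative Schur expansion is the whole problem.

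Your alternative route through Theorem~\ref{thm:schur_expansion} is, as you concede in your own text, circular: the existence of dominant elements with the correct multiplicities in each $D_i$-class is equivalent to the conjecture, not a tool for proving it, because the uniqueness of the dominant element in Definition~\ref{def:dominant} is derived from Theorem~\ref{thm:isomorphic}, which is unavailable without axioms $4$ and $6$. A smaller point: condition (i) of Definition~\ref{defn:LSP} covers restrictions to at most four consecutive involutions (objects of size up to $6$), so the base case you can extract from Theorem~\ref{thm:ax1235} is $n \leq 6$, not a ``five-color restriction''; this does not save the inductive step, which is where the argument is missing. In short, the proposal correctly diagnoses why the strong machinery breaks but supplies no replacement, and the conjecture remains open.
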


Conjecture~\ref{conj:Dgraph} has been verified for $n \leq
12$. Additionally, the extreme case when each shape in
$\boldsymbol{\mu}$ is a single (nonskewed) box has connected
components with particularly nice Schur expansions that can be proved
by more elementary means.

In this case, $D_{i} = \widetilde{d}_i$ for all $i$, and so there are
no double edges in $\G_{\boldsymbol{\mu}}$. For the standard dual
equivalence graphs, $\G_{\lambda}$ has no double edges if and only if
$\lambda$ is a hook, i.e. $\lambda = (m,1^{n-m})$ for some
$m \leq n$. Therefore the generating function for a dual equivalence graph
with no double edges is a sum of Schur functions indexed by hooks. The
analog of this fact for $\G_{\boldsymbol{\mu}}$ is that the generating
function is a sum of skew Schur functions indexed by ribbons.

Let $\nu$ be a ribbon of size $n$. Label the cells of $\nu$ from $1$
to $n$ in increasing order of content. Define the \emph{descent set of
  $\nu$}, denoted $\mathrm{Des}(\nu)$, to be the set of indices $i$
such that the cell labeled $\ipo$ lies south of the cell labeled
$i$.  Define the \emph{major index of a ribbon} by
\begin{equation}
  \mathrm{maj}(\nu) = \sum_{i \in \mathrm{Des}(\nu)} i.
\label{eqn:rib-maj}
\end{equation}

Any connected component of $\G_{\boldsymbol{\mu}}$ such that $D_{i} =
\widetilde{d}_i$ on the entire component not only has constant
diagonal inversion number, but the relative ordering of the first and
last letters of each vertex is constant as well. This is because
$\widetilde{d}_i$ does not change the relative order of the outer two
letters among $\triple$. Therefore, $w_1 > w_n$ for some $w \in [u]$
if and only if $w_1 > w_n$ for all $w \in [u]$. In the affirmative
case, say that \emph{$(1,n)$ is an inversion in $[u]$}.

\begin{figure}[ht]
  \begin{center}
    \begin{displaymath}
      \begin{array}{c}
        \{ 2314 \stackrel{\widetilde{d}_2}{\longleftrightarrow} 3124
        \stackrel{\widetilde{d}_3}{\longleftrightarrow} 2143 
        \stackrel{\widetilde{d}_2}{\longleftrightarrow} 1342 
        \stackrel{\widetilde{d}_3}{\longleftrightarrow} 1423 \} 
        \hspace{2em}
        \{ 1432 \stackrel{\widetilde{d}_2}{\longleftrightarrow} 2413
        \stackrel{\widetilde{d}_3}{\longleftrightarrow} 3214 \} \\
        \{ 2341 \stackrel{\widetilde{d}_2}{\longleftrightarrow} 3142 
        \stackrel{\widetilde{d}_3}{\longleftrightarrow} 4123 \}
        \hfill
        \{ 4312 \stackrel{\widetilde{d}_2}{\longleftrightarrow} 4231
        \stackrel{\widetilde{d}_3}{\longleftrightarrow} 3421 \} \\
        \{ 2134 \stackrel{\widetilde{d}_2}{\longleftrightarrow} 1324
        \stackrel{\widetilde{d}_3}{\longleftrightarrow} 1243 \} 
        \hspace{2em}
        \{ 4132 \stackrel{\widetilde{d}_2}{\longleftrightarrow} 4213
        \stackrel{\widetilde{d}_3}{\longleftrightarrow} 3412
        \stackrel{\widetilde{d}_2}{\longleftrightarrow} 2431
        \stackrel{\widetilde{d}_3}{\longleftrightarrow} 3241 \} 
      \end{array} 
    \end{displaymath}
    \caption{\label{fig:ribbon_classes}The twisted dual equivalence
      classes of $\mathfrak{S}_4$.}
  \end{center}
\end{figure}

\begin{theorem}
  Let $[U]$ be an equivalence class for
  $\mathrm{SYT}(\boldsymbol{\mu})$ under $D_i$ for which $D_{i}(T) =
  \widetilde{d}_i(T)$ for all $T \in [U]$. Then
  \begin{equation}
    \sum_{T \in [U]} Q_{\sigma(T)}(x) =  \hspace{-2em}
    \sum_{\substack{\nu \ \mathrm{an} \ n-\mathrm{ribbon} \\ \mathrm{maj}(\nu) =
        \mathrm{dinv}(U) \\ \nmo \in \mathrm{Des}(\nu) \Leftrightarrow
    (1,n) \in \mathrm{dInv}(U)}} \hspace{-2em} s_{\nu}.
  \label{eqn:rib-expand}
  \end{equation}
\label{thm:k=n}
\end{theorem}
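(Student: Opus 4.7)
The plan is to induct on $n$ by stripping off the maximal entry. Two invariants are used throughout: $\mathrm{dinv}(T)$ is constant on $[U]$ by Proposition~\ref{prop:preserve}, and the $(1,n)$-inversion status is constant because $\widetilde{d}_i$ preserves the relative order of the outer letters of its triple -- a short case analysis shows that even when one or both of the positions $1$ and $n$ in the reading word lie inside the active triple, the relative order of the entries at these positions is preserved. The base cases $n\le 3$ are verified directly: every twisted equivalence class in $\mathfrak{S}_n$ has generating function equal to the Schur function of the partition of $n$ picked out by the $\mathrm{dinv}$ and $(1,n)$-inversion data, and every partition of size at most $3$ is a ribbon.

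For the inductive step, note that $\widetilde{d}_i$ with $i<n-1$ rearranges only entries in $\{i-1,i,i+1\}\subseteq[n-1]$, so the cell $x_n(T)$ containing the entry $n$ is invariant on each connected component of the $(n-1,n-1)$-restriction of $[U]$. This stratifies $[U]=\bigsqcup_{x}[U]_x$ by the cell of $n$, with $\widetilde{d}_{n-1}$-edges being the only edges that cross strata. Deleting the entry $n$ from each tuple in $[U]_x$ produces a class $[U'_x]$ in $\mathrm{SYT}(\boldsymbol{\mu}\setminus\{x\})$ on which the hypothesis $D_i=\widetilde{d}_i$ is inherited throughout, so by induction
\[
\sum_{T'\in[U'_x]}Q_{\sigma(T')}(X) \;=\; \sum_{\nu'\in R_x}s_{\nu'}(X),
\]
where $R_x$ consists of the ribbons of size $n-1$ picked out by the invariants $\mathrm{dinv}(U'_x)$ and the $(1,n-1)$-inversion status of $[U'_x]$.

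To reassemble, I would use that every ribbon $\nu$ of size $n$ is uniquely obtained from some ribbon $\nu'$ of size $n-1$ by end-extension: either adjoining a new cell to the right of the last cell ($n-1\notin\mathrm{Des}(\nu)$ and $\mathrm{maj}(\nu)=\mathrm{maj}(\nu')$) or directly below it ($n-1\in\mathrm{Des}(\nu)$ and $\mathrm{maj}(\nu)=\mathrm{maj}(\nu')+(n-1)$). The $\widetilde{d}_{n-1}$-edges of $[U]$ pair up precisely those strata $[U]_x,[U]_{x'}$ that realize the two end-extensions of a common underlying $\nu'$, so summing the inductively supplied generating functions across strata and gluing along $\widetilde{d}_{n-1}$ produces each ribbon $\nu$ of size $n$ satisfying the theorem's hypothesis exactly once with its full SYT-count multiplicity.

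The main technical obstacle is the bookkeeping in this reassembly. One must verify that, for each stratum $[U]_x$, the difference $\mathrm{dinv}(U)-\mathrm{dinv}(U'_x)$ coming from the contribution of $n$ to diagonal inversions matches the $\mathrm{maj}$-increment $\mathrm{maj}(\nu)-\mathrm{maj}(\nu')$ of the corresponding ribbon end-extension, and that the $(1,n)$-inversion status of $U$ translates into the descent-at-$(n-1)$ condition on $\nu$. Both reduce to a local analysis of diagonal inversions involving $x_n$, using the hypothesis $\mathrm{dist}(n-2,n-1,n)\le k$ on $[U]$, which tightly constrains the shifted-content configurations of $x_n$ and $x_{n-1}$ so that only the ascent/descent dichotomy at the $n$-cell governs the bookkeeping.
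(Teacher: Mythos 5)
Your plan is a genuinely different route from the paper's, but the crucial steps are asserted rather than established, and the reassembly as described does not actually hold. The paper's proof first characterizes $[U]$ exactly as the set of permutations with a prescribed inversion number and a prescribed $(1,n)$-inversion status (stated there as an ``easy exercise'' by induction); it then invokes Foata's bijection $\phi$, which converts $\mathrm{inv}$ to $\mathrm{maj}$, converts $(1,n)\in\mathrm{Inv}$ to $n-1\in\mathrm{Des}$, and preserves the signature $\sigma$; the result then drops out of the quasisymmetric expansion \eqref{eqn:rib-s} of ribbon Schur functions. You never obtain that global characterization of $[U]$, and you replace the Foata step with a stratification-by-cell-of-$n$ induction whose gluing step is asserted, not shown.

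The end-extension reassembly is where the argument breaks. Consider the twisted class containing $2314$ in $\mathfrak{S}_4$: $[U]=\{2314,3124,2143,1342,1423\}$, with $\mathrm{inv}=2$ and no $(1,4)$-inversion, so the theorem's right side is the single ribbon with $\mathrm{Des}=\{2\}$, namely $(3,2)/(1)$. The strata by the position of $4$ are $\{2314,3124\}$, $\{2143,1342\}$, $\{1423\}$, and after deleting $4$ these produce the size-$3$ twisted classes $\{231,312\}$, $\{213,132\}$, $\{123\}$, corresponding by induction to the three distinct ribbons $(2,2)/(1)$, $(2,1)$, $(3)$ (with $\mathrm{maj}$ equal to $2,1,0$ respectively). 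These three are not ``the two end-extensions of a common underlying $\nu'$''; only one of them, $(2,2)/(1)$, is the contraction of $(3,2)/(1)$. So the pairing-of-strata picture you describe does not apply, and the $\mathrm{dinv}$-increment from stratum to full class is $0$, $1$, $2$ respectively — not the $\{0,n-1\}$ that ribbon end-extension permits. There is also a second gap you implicitly suppress: $\sigma_{n-1}$ is not constant on a stratum. Already in $\{2143,1342\}$, the entry $3$ moves past the fixed entry $4$ under $\widetilde{d}_2$, flipping $\sigma_3$, which is consistent with axiom $2$ (only $\sigma_h$ with $h<i-2$ or $h>i+1$ is preserved) but means your inductively supplied size-$(n-1)$ generating functions cannot simply be lifted by appending a constant sign in coordinate $n-1$. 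Both issues would need a fundamentally different reassembly mechanism; the paper sidesteps them entirely by working with the full class at once via Foata's bijection.
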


\begin{proof}
  We may assume $D_i$ is acting by $\widetilde{d}_i$ on the set of
  permutations of $[n]$, in which case diagonal inversions are the
  usual inversions for permutations. By earlier remarks, for $S,T \in
  [U]$, $\mathrm{inv}(S) = \mathrm{inv}(T)$ and $(1,n) \in
  \mathrm{Inv}(S)$ if and only if $(1,n) \in \mathrm{Inv}(T)$. In
  fact, it is an easy exercise to show by induction that this
  necessary condition for two vertices to coexist in $[U]$ is also
  sufficient. That is to say, $[U]$ is the set of words $S$ with
  $\mathrm{inv}(S) = \mathrm{inv}(U)$ and $(1,n) \in \mathrm{Inv}(S)$
  if and only if $(1,n)$ is an inversion of $U$.

  Recall Foata's bijection on words \cite{Foata1968}. For $w$ a word
  and $x$ a letter, $\phi$ is built recursively using an inner
  function $\gamma_x$ by $ \phi(wx) = \gamma_x \left( \phi(w) \right)
  x$. From this structure it follows that the last letter of $w$ is
  the same as the last letter of $\phi(w)$. Furthermore, $\gamma_x$ is
  defined so that the last letter of $w$ is greater than $x$ if and
  only if the first letter of $\gamma_x(w)$ is greater than $x$, and
  $\phi$ preserves the descent set of the inverse permutation,
  i.e. $\sigma(w) = \sigma(\phi(w))$.  Finally, the bijection
  satisfies $\mathrm{maj}(w) = \mathrm{inv}(\phi(w))$.  Summarizing
  these properties, $\phi$ is a $\sigma$-preserving bijection between
  the following sets:
  \begin{eqnarray*}
    \left\{ w \; | \; \mathrm{inv}(w)=j \; \mbox{and} \; (1,n) \in
      \mathrm{Inv}(w) \right\} & \stackrel{\sim}{\longleftrightarrow}
    & \left\{ w \; | \; \mathrm{maj}(w)=j \; \mbox{and} \; \nmo \in
      \mathrm{Des}(w) \right\}, \\ 
    \left\{ w \; | \; \mathrm{inv}(w)=j \; \mbox{and} \; (1,n) \not\in
      \mathrm{Inv}(w) \right\} & \stackrel{\sim}{\longleftrightarrow}
    & \left\{ w \; | \; \mathrm{maj}(w)=j \; \mbox{and} \; \nmo
      \not\in \mathrm{Des}(w) \right\}. 
  \end{eqnarray*}

  A standard filling of a ribbon $\nu$ is just a permutation $w$ such
  that $\mathrm{Des}(w) = \mathrm{Des}(\nu)$. Therefore by
  \eqref{eqn:quasi-s}, the Schur function $s_{\nu}$ may be expressed
  as
  \begin{equation}
    s_{\nu}(x) = \sum_{\mathrm{Des}(w) = \mathrm{Des}(\nu)} Q_{\sigma(w)}(x).
    \label{eqn:rib-s}
  \end{equation}
  Applying $\phi$ to this formula yields \eqref{eqn:rib-expand}.
\end{proof}

\bibliographystyle{amsalpha} 
\bibliography{../references}

%
\appendix
%

\section{Standard dual equivalence graphs}
\label{app:DEGs}

Below we give the dual equivalence graphs of type $(6,6)$. The graphs
for the conjugate shapes may be obtained by transposing each tableau
and multiplying the signature coordinate-wise by $-1$.


\begin{figure}[ht]
  \begin{displaymath}
    \begin{array}{\cs{3} \cs{3} \cs{3} \cs{3} c}
      \smstab{a}{2 \\ 1 & 3 & 4 & 5 & 6}{-++++} &
      \smstab{b}{3 \\ 1 & 2 & 4 & 5 & 6}{+-+++} &
      \smstab{c}{4 \\ 1 & 2 & 3 & 5 & 6}{++-++} &
      \smstab{d}{5 \\ 1 & 2 & 3 & 4 & 6}{+++-+} &
      \smstab{e}{6 \\ 1 & 2 & 3 & 4 & 5}{++++-} 
    \end{array}
    \psset{nodesep=3pt,linewidth=.1ex}
    \everypsbox{\scriptstyle}
    \ncline            {a}{b} \naput{2}
    \ncline            {b}{c} \naput{3}
    \ncline            {c}{d} \naput{4}
    \ncline            {d}{e} \naput{5}
  \end{displaymath}
\caption{\label{fig:G51}The standard dual equivalence graph $\G_{5,1}$.}
\end{figure}

\begin{figure}[ht]
  \begin{displaymath}
    \begin{array}{\cs{2} \cs{2} \cs{2} \cs{4} \cs{4} \cs{2} \cs{2} c}
      \smstab{a}{3 & 4 \\ 1 & 2 & 5 & 6}{+-+++} & &     
      \smstab{b}{2 & 4 \\ 1 & 3 & 5 & 6}{-+-++} & &     
      \smstab{c}{2 & 5 \\ 1 & 3 & 4 & 6}{-++-+} & & & \\[\cellsize]
      & & & \smstab{i}{2 & 6 \\ 1 & 3 & 4 & 5}{-+++-} & &
      \smstab{j}{3 & 5 \\ 1 & 2 & 4 & 6}{+-+-+} & &
      \smstab{k}{4 & 5 \\ 1 & 2 & 3 & 6}{++-++} \\[\cellsize]
      \smstab{z}{5 & 6 \\ 1 & 2 & 3 & 4}{+++-+} & &     
      \smstab{y}{4 & 6 \\ 1 & 2 & 3 & 5}{++-+-} & &     
      \smstab{x}{3 & 6 \\ 1 & 2 & 4 & 5}{+-++-} & & & 
    \end{array}
    \psset{nodesep=3pt,linewidth=.1ex}
    \everypsbox{\scriptstyle}
    \ncline            {cc}{i} \naput{5}
    \ncline            {ii}{x} \naput{2}
    \ncline            {c}{b} \nbput{4}
    \ncline            {y}{x} \nbput{3}
    \ncline[offset=2pt]{b}{a}\naput{3}
    \ncline[offset=2pt]{a}{b}\naput{2}
    \ncline            {cc}{j} \nbput{2}
    \ncline            {jj}{x} \nbput{5}
    \ncline[offset=2pt]{y}{z}\naput{5}
    \ncline[offset=2pt]{z}{y}\naput{4}
    \ncline[offset=2pt]{j}{k}\naput{3}
    \ncline[offset=2pt]{k}{j}\naput{4}
  \end{displaymath}
\caption{\label{fig:G42}The standard dual equivalence graph $\G_{4,2}$.}
\end{figure}

\begin{figure}[ht]
  \begin{displaymath}
    \begin{array}{\cs{5} \cs{5} \cs{5} c}
      & & \smstab{w}{3 & 4 & 6 \\ 1 & 2 & 5}{+-++-} & \\[.5\cellsize]
      \smstab{z}{4 & 5 & 6 \\ 1 & 2 & 3}{++-++} &
      \smstab{y}{3 & 5 & 6 \\ 1 & 2 & 4}{+-+-+} & &
      \smstab{v}{2 & 4 & 6 \\ 1 & 3 & 5}{-+-+-} \\[.5\cellsize]
      & & \smstab{x}{2 & 5 & 6 \\ 1 & 3 & 4}{-++-+} & 
    \end{array}
    \psset{nodesep=3pt,linewidth=.1ex}
    \everypsbox{\scriptstyle}
    \ncline[offset=2pt]{v}{w} \naput{3}
    \ncline[offset=2pt]{w}{v} \naput{2}
    \ncline            {x}{y} \naput{2}
    \ncline[offset=2pt]{y}{z} \naput{4}
    \ncline[offset=2pt]{z}{y} \naput{3}
    \ncline[offset=2pt]{v}{x} \nbput{4}
    \ncline[offset=2pt]{x}{v} \nbput{5}
    \ncline            {w}{y} \nbput{5}
  \end{displaymath}  
\caption{\label{fig:G33}The standard dual equivalence graph $\G_{3,3}$.}
\end{figure}

\begin{figure}[ht]
  \begin{displaymath}
    \begin{array}{\cs{1} \cs{1} \cs{1} \cs{1} \cs{1} \cs{1} c}
      \smstab{a}{3 \\ 2 \\ 1 & 4 & 5 & 6}{--+++} & &
      \smstab{c}{4 \\ 3 \\ 1 & 2 & 5 & 6}{+--++} & &
      \smstab{f}{5 \\ 4 \\ 1 & 2 & 3 & 6}{++--+} & &
      \smstab{j}{6 \\ 5 \\ 1 & 2 & 3 & 4}{+++--} \\[1.5\cellsize]
      &
      \smstab{b}{4 \\ 2 \\ 1 & 3 & 5 & 6}{-+-++} & &
      \smstab{e}{5 \\ 3 \\ 1 & 2 & 4 & 6}{+-+-+} & &
      \smstab{i}{6 \\ 4 \\ 1 & 2 & 3 & 5}{++-+-} &  \\[1.5\cellsize]
      & & 
      \smstab{d}{5 \\ 2 \\ 1 & 3 & 4 & 6}{-++-+} & &
      \smstab{h}{6 \\ 3 \\ 1 & 2 & 4 & 5}{+-++-} & & \\[1.5\cellsize]
      & & & \smstab{g}{6 \\ 2 \\ 1 & 3 & 4 & 5}{-+++-} & & & 
    \end{array}
    \psset{nodesep=3pt,linewidth=.1ex}
    \everypsbox{\scriptstyle}
    \ncline            {a}{b} \naput{3}
    \ncline            {b}{d} \naput{4}
    \ncline            {c}{e} \naput{4}
    \ncline            {d}{g} \naput{5}
    \ncline            {e}{h} \naput{5}
    \ncline            {f}{i} \naput{5}
    \ncline            {b}{c} \naput{2}
    \ncline            {d}{e} \naput{2}
    \ncline            {e}{f} \naput{3}
    \ncline            {g}{h} \naput{2}
    \ncline            {h}{i} \naput{3}
    \ncline            {i}{j} \naput{4}
  \end{displaymath}  
\caption{\label{fig:G411}The standard dual equivalence graph $\G_{4,1,1}$.}
\end{figure}

\begin{figure}[ht]
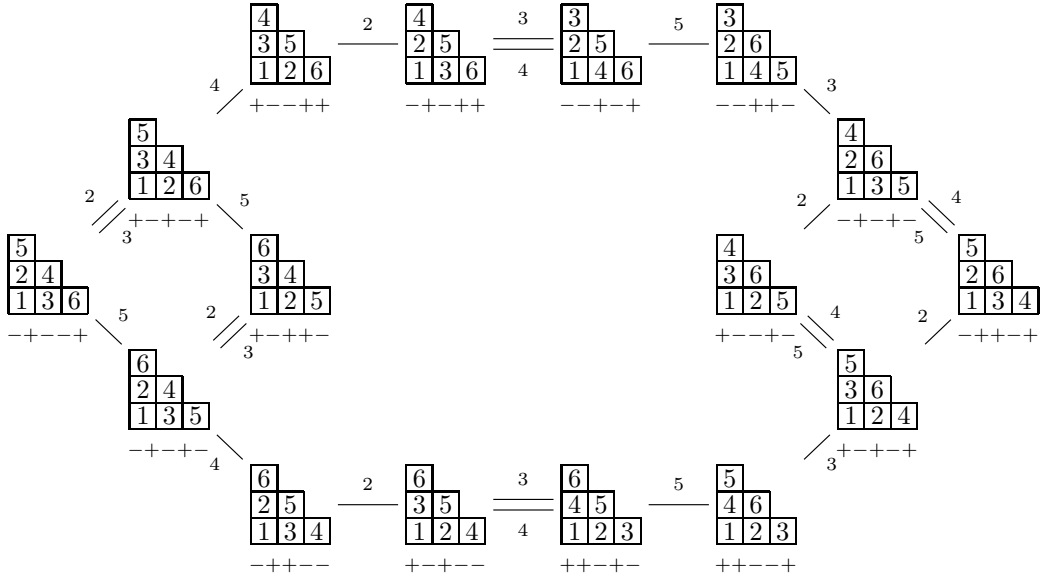

  \begin{displaymath}
    \begin{array}{\cs{1} \cs{1} \cs{4} \cs{4} \cs{4} \cs{1} \cs{1} c}
      & &
      \smstab{b2}{4 \\ 3 & 5 \\ 1 & 2 & 6}{+--++} &
      \smstab{a3}{4 \\ 2 & 5 \\ 1 & 3 & 6}{-+-++} &
      \smstab{a4}{3 \\ 2 & 5 \\ 1 & 4 & 6}{--+-+} &
      \smstab{b5}{3 \\ 2 & 6 \\ 1 & 4 & 5}{--++-} & & \\[1.5\cellsize]
      &
      \smstab{c2}{5 \\ 3 & 4 \\ 1 & 2 & 6}{+-+-+} & & & & &
      \smstab{c5}{4 \\ 2 & 6 \\ 1 & 3 & 5}{-+-+-} & \\[1.5\cellsize]
      \smstab{d1}{5 \\ 2 & 4 \\ 1 & 3 & 6}{-+--+} & &
      \smstab{d3}{6 \\ 3 & 4 \\ 1 & 2 & 5}{+-++-} & & &
      \smstab{d4}{4 \\ 3 & 6 \\ 1 & 2 & 5}{+--+-} & &
      \smstab{d6}{5 \\ 2 & 6 \\ 1 & 3 & 4}{-++-+}  \\ [1.5\cellsize]
      &
      \smstab{e2}{6 \\ 2 & 4 \\ 1 & 3 & 5}{-+-+-} & & & & &
      \smstab{e5}{5 \\ 3 & 6 \\ 1 & 2 & 4}{+-+-+} & \\ [1.5\cellsize]
      & &
      \smstab{f2}{6 \\ 2 & 5 \\ 1 & 3 & 4}{-++--} &
      \smstab{g3}{6 \\ 3 & 5 \\ 1 & 2 & 4}{+-+--} &
      \smstab{g4}{6 \\ 4 & 5 \\ 1 & 2 & 3}{++-+-} &
      \smstab{f5}{5 \\ 4 & 6 \\ 1 & 2 & 3}{++--+} & & 
    \end{array}
    \psset{nodesep=3pt,linewidth=.1ex}
    \everypsbox{\scriptstyle}
    \ncline[offset=2pt]{a3}{a4} \naput{3}
    \ncline[offset=2pt]{a4}{a3} \naput{4}
    \ncline            {b2}{a3} \naput{2}
    \ncline            {a4}{b5} \naput{5}
    \ncline            {b2}{c2} \nbput{4}
    \ncline            {b5}{c5} \naput{3}
    \ncline[offset=2pt]{d1}{c2} \naput{2}
    \ncline[offset=2pt]{c2}{d1} \naput{3}
    \ncline            {c2}{d3} \naput{5}
    \ncline            {d4}{c5} \naput{2}
    \ncline[offset=2pt]{c5}{d6} \naput{4}
    \ncline[offset=2pt]{d6}{c5} \naput{5}
    \ncline            {d1}{e2} \naput{5}
    \ncline[offset=2pt]{e2}{d3} \naput{2}
    \ncline[offset=2pt]{d3}{e2} \naput{3}
    \ncline[offset=2pt]{d4}{e5} \naput{4}
    \ncline[offset=2pt]{e5}{d4} \naput{5}
    \ncline            {e5}{d6} \naput{2}
    \ncline            {e2}{f2} \nbput{4}
    \ncline            {e5}{f5} \naput{3}
    \ncline            {f2}{g3} \naput{2}
    \ncline            {g4}{f5} \naput{5}
    \ncline[offset=2pt]{g3}{g4} \naput{3}
    \ncline[offset=2pt]{g4}{g3} \naput{4}
  \end{displaymath}
  \caption{\label{fig:G321}The standard dual equivalence graph $\G_{3,2,1}$.}
\end{figure}

\clearpage
\section{Necessity of axiom $6$}
\label{app:axiom6}

The example in Figure~\ref{fig:gregg}, first observed by Gregg
Musiker, demonstrates the necessity of axiom $6$. It satisfies axioms
$1$ through $5$, but fails axiom $6$. Comparing with the standard dual
equivalence graphs in Appendix~\ref{app:DEGs}, this graph is a
two-fold cover of $\G_{(3,2,1)}$ as expected from its generating
function $2 s_{(3,2,1)}(X)$. Figure~\ref{fig:tab-gregg} gives the
isomorphism classes of the $(5,6)$-restriction of this graph.

\begin{figure}[ht]
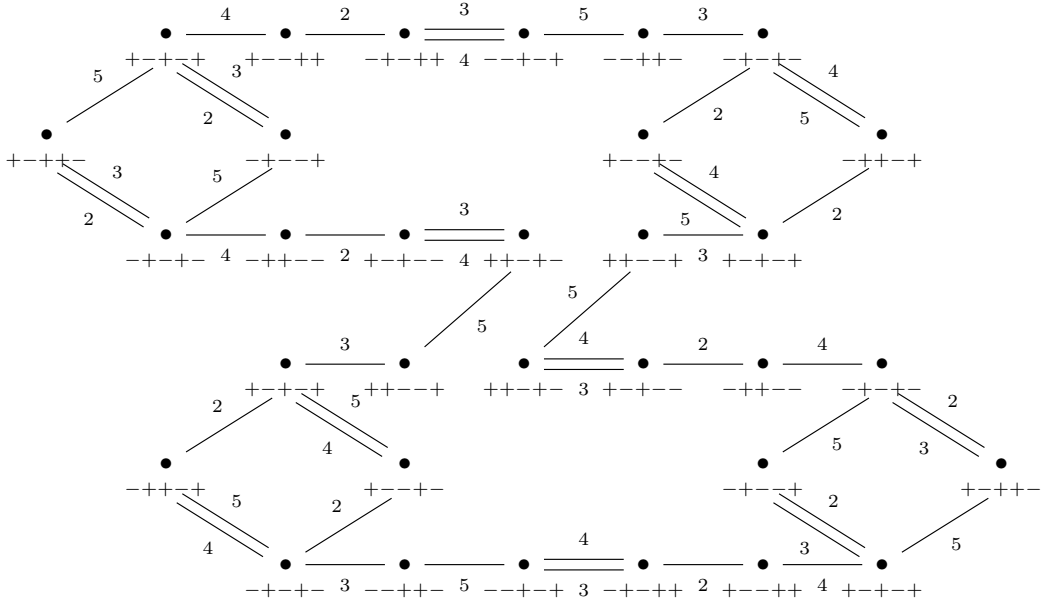

  \begin{displaymath}
    \begin{array}{\cs{7} \cs{7} \cs{7} \cs{7} \cs{7} \cs{7} \cs{7} \cs{7} c}
      &
      \sbull{c2}{+-+-+} &
      \sbull{b2}{+--++} &
      \sbull{a3}{-+-++} &
      \sbull{a4}{--+-+} &
      \sbull{b5}{--++-} &
      \sbull{c5}{-+-+-} &
      & \\[2\cellsize]
      \sbull{d3}{+-++-} & &
      \sbull{d1}{-+--+} & & &
      \sbull{d4}{+--+-} & &
      \sbull{d6}{-++-+} 
      & \\[2\cellsize]
      &
      \sbull{e2}{-+-+-} &
      \sbull{f2}{-++--} &
      \sbull{g3}{+-+--} &
      \sbull{g4}{++-+-} &
      \sbull{f5}{++--+} &
      \sbull{e5}{+-+-+} & 
      & \\[3\cellsize]
      & &
      \sbull{xe5}{+-+-+} & 
      \sbull{xf5}{++--+} &
      \sbull{xg4}{++-+-} &
      \sbull{xg3}{+-+--} &
      \sbull{xf2}{-++--} &
      \sbull{xe2}{-+-+-} & 
      \\[2\cellsize]
      &
      \sbull{xd6}{-++-+} & &
      \sbull{xd4}{+--+-} & & &
      \sbull{xd1}{-+--+}  & &
      \sbull{xd3}{+-++-}
      \\ [2\cellsize]
      & &
      \sbull{xc5}{-+-+-} &
      \sbull{xb5}{--++-} &
      \sbull{xa4}{--+-+} &
      \sbull{xa3}{-+-++} &
      \sbull{xb2}{+--++} &
      \sbull{xc2}{+-+-+} & 
    \end{array}
    \psset{nodesep=5pt,linewidth=.1ex}
    \everypsbox{\scriptstyle}
    \ncline[offset=2pt]{a3}{a4} \naput{3}
    \ncline[offset=2pt]{a4}{a3} \naput{4}
    \ncline            {b2}{a3} \naput{2}
    \ncline            {a4}{b5} \naput{5}
    \ncline            {c2}{b2} \naput{4}
    \ncline            {b5}{c5} \naput{3}
    \ncline[offset=2pt]{d1}{c2c2} \naput{2}
    \ncline[offset=2pt]{c2c2}{d1} \naput{3}
    \ncline            {c2c2}{d3} \nbput{5}
    \ncline            {d4}{c5c5} \nbput{2}
    \ncline[offset=2pt]{c5c5}{d6} \naput{4}
    \ncline[offset=2pt]{d6}{c5c5} \naput{5}
    \ncline            {d1d1}{e2} \nbput{5}
    \ncline[offset=2pt]{e2}{d3d3} \naput{2}
    \ncline[offset=2pt]{d3d3}{e2} \naput{3}
    \ncline[offset=2pt]{d4d4}{e5} \naput{4}
    \ncline[offset=2pt]{e5}{d4d4} \naput{5}
    \ncline            {e5}{d6d6} \nbput{2}
    \ncline            {e2}{f2} \nbput{4}
    \ncline            {f5}{e5} \nbput{3}
    \ncline            {f2}{g3} \nbput{2}
    \ncline            {g4g4}{xf5} \naput{5}
    \ncline[offset=2pt]{g3}{g4} \naput{3}
    \ncline[offset=2pt]{g4}{g3} \naput{4}
    \ncline[offset=2pt]{xa3}{xa4} \naput{3}
    \ncline[offset=2pt]{xa4}{xa3} \naput{4}
    \ncline            {xb2}{xa3} \naput{2}
    \ncline            {xa4}{xb5} \naput{5}
    \ncline            {xc2}{xb2} \naput{4}
    \ncline            {xb5}{xc5} \naput{3}
    \ncline[offset=2pt]{xd1xd1}{xc2} \naput{2}
    \ncline[offset=2pt]{xc2}{xd1xd1} \naput{3}
    \ncline            {xc2}{xd3xd3} \nbput{5}
    \ncline            {xd4xd4}{xc5} \nbput{2}
    \ncline[offset=2pt]{xc5}{xd6xd6} \naput{4}
    \ncline[offset=2pt]{xd6xd6}{xc5} \naput{5}
    \ncline            {xd1}{xe2xe2} \nbput{5}
    \ncline[offset=2pt]{xe2xe2}{xd3} \naput{2}
    \ncline[offset=2pt]{xd3}{xe2xe2} \naput{3}
    \ncline[offset=2pt]{xd4}{xe5xe5} \naput{4}
    \ncline[offset=2pt]{xe5xe5}{xd4} \naput{5}
    \ncline            {xe5xe5}{xd6} \nbput{2}
    \ncline            {xe2}{xf2} \nbput{4}
    \ncline            {xf5}{xe5} \nbput{3}
    \ncline            {xf2}{xg3} \nbput{2}
    \ncline            {xg4}{f5f5} \naput{5}
    \ncline[offset=2pt]{xg3}{xg4} \naput{3}
    \ncline[offset=2pt]{xg4}{xg3} \naput{4}
  \end{displaymath}
  \caption{\label{fig:gregg}The smallest graph satisfying dual
    equivalence graph axioms $1-5$ but not $6$.}
\end{figure}

\begin{figure}[ht]
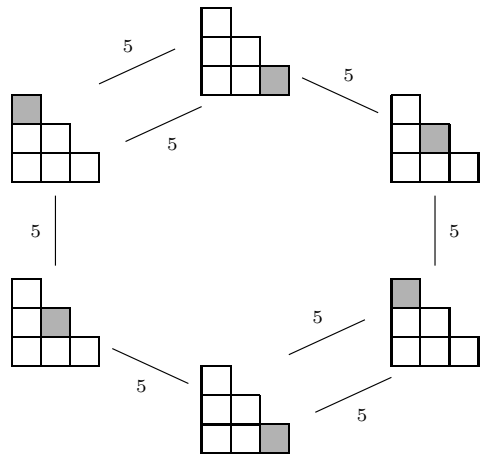

  \begin{displaymath}
    \begin{array}{\cs{9}\cs{9}c}
      & \rnode{a}{\tableau{\e \\ \e & \e \\ \e & \e & \cb}} & \\[\cellsize]
      \rnode{b}{\tableau{\cb \\ \e & \e \\ \e & \e & \e}} & 
      & \rnode{c}{\tableau{\e \\ \e & \cb \\ \e & \e & \e}} \\[5\cellsize]
      \rnode{C}{\tableau{\e \\ \e & \cb \\ \e & \e & \e}} & 
      & \rnode{B}{\tableau{\cb \\ \e & \e \\ \e & \e & \e}} \\[\cellsize]
      & \rnode{A}{\tableau{\e \\ \e & \e \\ \e & \e & \cb}} & 
    \end{array}
    \psset{nodesep=5pt,linewidth=.1ex}
    \everypsbox{\scriptstyle}
    \ncline[offset=12pt] {a}{b} \naput{5}
    \ncline[offset=12pt] {b}{a} \naput{5}
    \ncline {a}{c} \naput{5}
    \ncline {b}{C} \nbput{5}
    \ncline {c}{B} \naput{5}
    \ncline {C}{A} \nbput{5}
    \ncline[offset=12pt] {B}{A} \naput{5}
    \ncline[offset=12pt] {A}{B} \naput{5}
  \end{displaymath}
  \caption{\label{fig:tab-gregg}The $(5,6)$-restriction of
    Figure~\ref{fig:gregg} highlighting the two-fold cover of
    $\G_{(3,2,1)}$.}
\end{figure}

\end{document}